\DeclareMathOperator{\ind}{Index}
\theoremstyle{plain}
\newtheorem{theorem}{Theorem}[section]
\newtheorem{corollary}[theorem]{Corollary}
\newtheorem{Lemma}[theorem]{Lemma}
\newtheorem{remark}[theorem]{Remark}
\newtheorem{proposition}[theorem]{Proposition}
\newtheorem{obsv}[theorem]{Observation}
\newtheorem{conjecture}[theorem]{Conjecture}
\newcommand{\RR} {\mathbb R}
\newcommand{\CC} {\mathbb C}
\newcommand{\DD} {\mathbb D}
\newcommand{\SSS} {\mathbb S}
\newcommand{\ZZ} {\mathbb Z}
\newcommand{\NN} {\mathbb N}
\newcommand{\TT} {\mathbb T}
\newcommand{\NNN}{\mathcal{N}}
\newcommand{\CCC}{\mathcal{C}}
\newcommand{\Pot}{V}
\newcommand{\Ht}{H_t}
\newcommand{\pa} {\partial}
\newcommand{\Cal} {\mathcal}
\newcommand{\beq} {\begin{equation}}
\newcommand{\eeq} {\end{equation}}
\newcommand{\inrad}{\operatorname{inrad}}
\numberwithin{equation}{section}
\theoremstyle{plain}
\newtheorem{mainthm}{Theorem}
\crefname{mainthm}{Theorem}{Theorems}
\Crefname{mainthm}{Theorem}{Theorems}
\begin{document}
\title[Nodal domains and perturbations]{
Nodal Domains on Surfaces under Perturbation: Upper Semicontinuity, Courant-Sharpness, and Boundary Intersections
}

\author{Saikat Maji}
\address{Indian Institute of Technology Bombay, Powai, Maharashtra 400076, India}
\email{saikat.maji@iitb.ac.in}
\author{Mayukh Mukherjee}
\address{Indian Institute of Technology Bombay, Powai, Maharashtra 400076, India}
\email{mathmukherjee@gmail.com}
\author{Soumyajit Saha}
\address{Institut de Recherche Math\'{e}matique Avanc\'{e}e (IRMA), Strasbourg 67084, France}
\email{soumyajit.saha@unistra.fr}

\begin{abstract}
We study how the number of nodal domains of eigenfunctions of Schr\"odinger operators $-\Delta_{g_t}+V_t$ on closed surfaces changes under smooth perturbations of $(g_t,V_t)$ along convergent eigenbranches. Locally, near each nodal critical point of the limit eigenfunction, we give a sector/graph count showing that no new local domains can be created and that vanishing orders cannot increase. Globally, we prove upper semicontinuity of the nodal domain count; in the noncritical case the count is stable. The result is \emph{branch-free} on spectral clusters. At the wavelength scale, new closed nodal loops cannot be created. 

We also treat \emph{localised} (topology-changing) perturbations: the count inside the unperturbed core cannot increase. As applications, we construct metrics on any closed surface that are Courant-sharp up to an arbitrary finite level and prescribe $2n_i$ boundary intersections on each boundary component. An appendix records a uniform (wavelength-scale) lower bound on the inner radius of nodal domains along the branch.
\end{abstract}

\maketitle
\tableofcontents

\section{Introduction and main results} Let $(M, g)$ be a closed Riemannian surface.  Consider the eigenvalue equation
\begin{equation}\label{eqtn: Eigenfunction equation}
    -\Delta \varphi = \lambda \varphi, 
\end{equation}
where $\Delta$ is the Laplace-Beltrami operator given by (using the Einstein summation convention) 
$$
\Delta f = \frac{1}{\sqrt{|g|}}\pa_i \left( \sqrt{|g|}g^{ij}\pa_j f\right),
$$
where $|g|$ is the determinant of the metric tensor $g_{ij}$. In the Euclidean space, this reduces to the usual $\Delta = \pa_1^2 + \dots + \pa_n^2$ (note that we are using the analyst's sign convention for the Laplacian, namely that $-\Delta$ is positive semidefinite).

Let  $\mathcal{N}(\varphi) = \{ x \in M: \varphi (x) = 0\}$ denote the nodal set of the eigenfunction $\varphi$. On a closed $n$-manifold, the nodal set is the union of a smooth hypersurface and a set that is countably $(n - 2)$-rectifiable (see \cite{Cheng1976, HardtSimon1989, Baer1997}). In particular, for $n =2$, the nodal set is a finite embedded graph, possibly with embedded circle components; away from finitely many nodal critical points, it is a finite union of smooth arcs.
Recall that any connected component of $M \setminus \mathcal{N}(\varphi)$ is known as a nodal domain of the eigenfunction $\varphi$, and we generally denote nodal domains by $\Omega$ (with minor abuse of notation). Let us denote by $\nu(\varphi)$ the number of nodal domains for $\varphi$ and by $\mathcal{S}(\varphi)$ the set of self-intersection points (termed as \textit{nodal critical points}) of $\varphi$. The latter is contained in $\mathcal{C}(\varphi)$, the set of all critical points of $\varphi$. Observe that the set $\Cal{C}(\varphi)$ does not need to be isolated; in fact, it can be a hypersurface, as given by the eigenfunction $\varphi(x,y)=\sin(2k\pi x)$ on the flat torus $\TT^2$. However, $\nabla \varphi$ satisfies an elliptic system for which one can establish Carleman estimates and unique continuation (see, for example, \cite{Laurent2012}), due to which $\Cal{C}(\varphi)$ cannot have an interior point. It follows from the maximum principle that eigenfunctions are either strictly positive or strictly negative on each nodal domain, and change signs across non-critical nodal points.  

\subsection*{Standing framework (Schr\"{o}dinger operators).}
Throughout, we allow a (possibly $t$-dependent) smooth potential $\Pot_t\in C^\infty(M)$ and work with the Schr\"{o}dinger operator
\[
\Ht\ :=\ -\Delta_{g_t}+\Pot_t,
\]
considered as a self-adjoint operator on $L^2(M,d\mathrm{vol}_{g_t})$. Its spectrum is discrete and can be listed, with multiplicity,
\[
\lambda_1(t)\ \le \ \lambda_2(t)\ \le \cdots \nearrow \infty,
\]
with corresponding real-valued $L^2$-normalised eigenfunctions $\varphi_{k,t}$:
\begin{equation}\label{eq:Schr-eig}
  \Ht\varphi_{k,t}=\lambda_k(t)\varphi_{k,t}.
\end{equation}
We assume that $t\mapsto g_t$ and $t\mapsto\Pot_t$ are $C^\infty$ families and, whenever needed locally, that $\|\Pot_t\|_{C^2}$ is bounded uniformly for $|t|\le t_0$.
All notions of nodal set $\mathcal N(\varphi)$, nodal domains, critical and nodal critical points are as before. We note that the critical-point condition $d\varphi(p)=0$ is metric-independent; however, gradient-based quantities such as the index of $\nabla\varphi$ at a zero involve a choice of metric. Throughout the perturbative arguments in Sections~\ref{Sec: Preliminary results}-\ref{Sec: Proofs of main theorem 2}, all gradients and vector-field norms are computed with respect to the fixed background metric $g_0$. This is legitimate because $g_t$ and $g_0$ are uniformly equivalent for small $|t|$, so $\nabla^{g_0}\varphi_t$ and $\nabla^{g_t}\varphi_t$ have the same zero set, and the index at a zero is a topological invariant (homotopy-invariant winding number) that does not depend on this choice.

\medskip
\noindent\textbf{Regularity of nodal sets (2D).} In our Schr\"{o}dinger setting the coefficients are smooth, so the eigenfunctions themselves are $C^\infty$ by standard elliptic regularity. For the \emph{nodal-set structure} in dimension two, which requires only sufficiently regular coefficients, we rely on Hartman--Wintner \cite{HartmanWintner1953} and Alessandrini \cite{alessandrini1992}: the nodal set is a finite embedded graph (possibly with embedded circle components); away from finitely many isolated \emph{nodal critical points} it is a finite union of $C^{1,\alpha}$ arcs meeting at the nodal critical points with equal opening angles. Vanishing orders are integers and the local model is a harmonic polynomial of degree $k$ (no real-analyticity of the coefficients is required).

\medskip
\noindent\textbf{Small-domain obstruction for Schr\"{o}dinger operators.}
The following observation is used repeatedly throughout the paper to exclude the creation of tiny nodal domains under perturbation. We state it here for easy reference.

\begin{Lemma}[Small-domain eigenvalue obstruction]\label{lem:small-domain-Schro}
Let $D\subset (M,g)$ be a domain and $u\not\equiv 0$ a solution of $(-\Delta_g+\Pot)u=\lambda u$ in $D$ with $u|_{\partial D}=0$. Then
\[
  \lambda\ \ge\ \lambda_1^\Delta(D;g)+\inf_D \Pot,
\]
where $\lambda_1^\Delta(D;g)$ is the first Dirichlet eigenvalue of $-\Delta_g$ on $D$.
In particular, combining with the Faber--Krahn inequality $\lambda_1^\Delta(D;g)\ge C_{\mathrm{FK}}/\mathrm{Area}_g(D)$ (uniform for $D$ contained in small coordinate balls and under $C^2$-bounded perturbations of $g$),
\[
  \frac{C_{\mathrm{FK}}}{\mathrm{Area}_g(D)}\ \le\ \lambda - \inf_D\Pot.
\]
Hence if $\|\Pot_t\|_{L^\infty}$ is uniformly bounded and $\lambda_t$ stays in a compact interval, no nodal domain $D_t$ of $\varphi_t$ can be contained in a sufficiently small ball. Quantitatively, if $\lambda_t-\inf_{D_t}\Pot_t\le\Lambda$, then any ball containing $D_t$ has radius $r\ge c/\sqrt{\Lambda}$ for a uniform constant $c>0$.
\end{Lemma}
\begin{proof}
By the min-max principle, $\lambda_1^{\Delta+\Pot}(D)\ge \lambda_1^\Delta(D)+\inf_D\Pot$. Since $u$ vanishes on $\partial D$, the Rayleigh quotient gives $\lambda_1^{\Delta+\Pot}(D)\le\lambda$, yielding the chain.
\end{proof}

One of the intriguing areas of research in spectral geometry is the study of nodal domain counting. This topic has been extensively explored over the past century, with one of the earliest significant results due to Courant \cite{Courant1923} where he established an upper bound on the number of nodal domains associated with an eigenfunction $\varphi_k$, namely,  $\nu(\varphi_k)\leq k$. Shortly thereafter, Stern \cite{Ste} (see also \cite{BH}), in a somewhat opposite vein, demonstrated the existence of a sequence of eigenfunctions on a rectangle, each containing exactly two nodal domains. A similar result was also proved  in \cite{Lewy} where it was shown that there exists a sequence of eigenfunctions $\{\varphi_k\}$ with $\lambda_k\to \infty$ on the usual round sphere $S^2$ such that $\nu(\varphi_k)\leq 3$ for every $k$. The results of Lewy and Stern focus on the construction of eigenfunctions with the minimal possible number of nodal domains.  We also have results that explore the opposite direction. In several special arithmetic or symmetric negatively curved settings, and sometimes only along density-one subsequences, one can prove that the number of nodal domains grows with the eigenvalue; see \cite{ghosh2013nodal, jung2016number, jung2018quantum}. So a naturally related question in this area is the study of Courant-sharp eigenvalues and eigenfunctions. We refer to $\varphi_k$ as Courant-sharp if $\nu(\varphi_k)=k$. Pleijel \cite{Pleijel1956} showed that in the Dirichlet case for domains in $\RR^2$, only finitely many eigenfunctions can be Courant-sharp. This result was further generalised to compact Riemannian manifolds in higher dimensions in the works of \cite{peetre1957generalization, berard1982inegalites}; see also \cite{polterovich2009pleijel} for the Neumann (free membrane) analogue. There is also some previous work towards finding formulae for nodal domain counting which are roughly based on Euler characteristic arguments; for example, see \cite{Hoffmann-OstenhofMichorNadirashvili1999} for the case of planar bounded domains with smooth boundary.  These results, among many others, highlight the intricacies of nodal domain counting, a topic that incorporates a wide range of mathematical tools and techniques in its study.

In this article, our aim is to study the effect of metric perturbations on nodal domain count. It is known that nodal sets are not stable under generic perturbations (see e.g. \cite{Uhlenbeck1976}). Thus it is expected that the number of nodal domains would also change under perturbations.  
In \cite{BeckGuptaMarzuola2024-rectangle}, precise descriptions of nodal structure upon perturbation of rectangular domains were given under simplicity assumptions; in \cite{Lyons2025-rectangle}, the case of eigenvalues with multiplicity $2$ was treated. Depending on the spectral regime, the nodal domain count may drop or remain invariant. To the best of our knowledge, the behaviour of the number of nodal domains for general manifolds under arbitrary perturbations has not been discussed previously, and our investigations are geared towards this. From well-known numerics and also from a heuristic standpoint, it is believed that the nodal intersection picture becomes generally less complicated  on 
perturbation; this is because (heuristically speaking), on perturbation the nodal set ``untwines''. In the background of our considerations is the following fact (see \cite{Uhlenbeck1976,Albert1973}): a nonconstant Laplace eigenfunction for a generic metric (in the sense of Baire category) is a Morse function and also has no nodal critical points. So, generically, the nodal set is a collection of embedded circles, and every critical point is either a point of extremum or a saddle point.

We work with a $C^\infty$ family $(g_t,\Pot_t)$ and a corresponding smooth one-parameter family of eigenpairs
\[
\Ht\varphi_t=\lambda(t)\varphi_t,\qquad \|\varphi_t\|_{L^2(g_t)}=1,
\]
such that, for a fixed branch,
\begin{equation}\label{eq:eigenf_conv_S}
\varphi_t \longrightarrow \varphi_0 \quad \text{in } C^\infty\text{-topology, as } t\to 0,
\end{equation}
and $\lambda(t)\to\lambda(0)$. (This holds automatically near simple spectrum; for clusters we use Riesz projections; for more details on this, see the very general formulation outlined in \cite{KriegenMichorRainer11}).

Our first main result discusses the local behaviour of nodal critical points and nodal domains under small smooth perturbations. Among other things, our study leads to  an estimate on the number of ``local nodal domains'' around a nodal critical point after perturbation. Formally, this means the number of connected components $F_t(p)$ of $\displaystyle(M \setminus \NNN(\varphi_t) )\cap B_{g_0}(p, r)$, where $p$ is a nodal critical point of $\varphi_0$. We spell out clearly that for all our perturbative results, distances and other metric properties are with respect to a fixed background metric which is ``close'' to $g_t$ for small $t$. In particular, this background metric can be $g_0$.

\begin{mainthm}\label{thm:loc_nod_est}  
Let $p\in M$ be a nodal critical point of the Schr\"{o}dinger eigenfunction $\varphi_0$ with order of vanishing $k$. Also, denote by $C_t$ the number of connected components of $\NNN(\varphi_t)\cap B(p, r)$. 
Given $r$ small enough, there exists $\varepsilon := \varepsilon(r) > 0$ such that for any $|t| < \varepsilon$, the following hold:
\begin{enumerate}
    \item[(a)] The number of nodal critical points of $\varphi_t$ contained inside $B(p,r)$ does not exceed $k-1$.
    
    \item[(b)] The order of vanishing of any nodal critical point of $\varphi_t$ within $B(p, r)$ cannot be greater than $k$.
    
    \item[(c)]
    We have the following estimate:
    $$
    F_t(p) = F_0(p)-C_t+1.
    $$
    In particular, $F_0(p) \geq F_t(p) $.
\end{enumerate}

\end{mainthm}

Whenever \eqref{eq:eigenf_conv_S} holds for Schr\"{o}dinger eigenbranches, we have the following global version. 

\begin{mainthm}\label{thm:main_result_A}
    The nodal domain count is upper semicontinuous at $t=0$; equivalently, perturbation cannot increase the number of nodal domains. More precisely, whenever \eqref{eq:eigenf_conv_S} holds, we have that
    \beq\label{ineq:nodal_domain_decrease}
     \nu(\varphi_t) \leq \nu(\varphi_0) 
    \eeq 
    for small enough $|t|$. In particular, if $\varphi_0$ has no nodal critical point, then $\nu(\varphi_t) = \nu(\varphi_0)$.
\end{mainthm}
It is interesting to observe that in the above set up, the difference $\nu(\varphi_0) - \nu(\varphi_t)$ is controlled from above by the indices of $\nabla\varphi_0$ and $\nabla \varphi_t$ at their nodal critical points. Such an estimate has been outlined in Corollary \ref{cor:nod_dom_dec_est}, which is complementary to \eqref{ineq:nodal_domain_decrease} above.

Observe that until now we have been dealing with metric perturbations, but the topology stayed the same throughout the perturbation. In Section \ref{sec:changing_topology}, we prove  Theorem \ref{thm:perturb_nodal_domain_reduction}, which is a counterpart of Theorem \ref{thm:main_result_A} above, but in 
the case of perturbations which are local in nature but allow for change in the topology of the surface. These types of perturbation have been studied extensively (see \cite{Takahashi2002,Kom2005,MukherjeeSaha2022} and references therein). This has interesting consequences. For instance, as an application of  Theorem \ref{thm:perturb_nodal_domain_reduction}, 
we prove that on any given closed surface, there exist Riemannian metrics which support  Courant-sharp Laplace eigenfunctions up to arbitrary (but of course finite) level. We refer the reader to \cite{BerardHelffer2016-Courant_sharp_1,  Bonnaillie-NoelHelffer2016, BerardHelfferKiwan-Courant_sharp_2, BerardHelfferKiwan2022-Courant_sharp_3} for previous results on more specific manifolds. We state the result formally.

\begin{mainthm}\label{thm:Freitas_app}
     Given any two positive integers $k, m$, one can find a closed Riemannian surface $(M, g)$ of genus $m$ such that 
     for every $l\leq k$ the corresponding Laplace eigenfunction $\varphi_l$ of $-\Delta_g$ has exactly $l$ nodal domains, i.e., $\nu(\varphi_l)=l$ for all $l\leq k$. 
\end{mainthm}

Now we look at our final result, which deals with the prescription of the number of nodal intersections on the boundary of a surface. 

\begin{mainthm}\label{thm: prescription_nodal_intersection_multiple_bdry}
    Let $M$ be a Riemannian surface with genus $m$ and boundary components $B_1, \cdots, B_b$, which are topological circles. Given $n_1, \cdots, n_b \in \mathbb{N}$, there exist a metric $g$ on $M$ and an integer $l\in \NN$ such that, for each $i\in \{1, \cdots, b\}$, the nodal set of the Neumann eigenfunction $\varphi_{l+i}$ intersects the boundary component $B_i$ at exactly $2n_i$ points.
\end{mainthm}

Beyond Theorems \ref{thm:loc_nod_est}, \ref{thm:main_result_A}, \ref{thm:perturb_nodal_domain_reduction}, \ref{thm:Freitas_app} and \ref{thm: prescription_nodal_intersection_multiple_bdry}, we establish or discuss the following further results:
\begin{enumerate}
  \item \emph{Branch-free upper semicontinuity on clusters} (Theorem~\ref{thm:cluster_USC}): for eigenfunctions taken from a fixed spectral cluster, $t\mapsto \sup\{\nu(u)\}$ is upper semicontinuous at $t=0$ (no simple-branch selection needed). 
  
  \item \emph{Inner-radius stability} (Appendix, Theorem~\ref{thm: Inner_radius_perturbation}): the minimum inner radius of nodal domains along a fixed eigenbranch obeys a uniform lower bound $\gtrsim \lambda^{-1/2}$ under the perturbations considered.
  \item \emph{Higher-dimensional reduction and conjecture} (Section~\ref{sec:higher_dim}, Proposition~\ref{prop:abstract_USC_highdim} and Conjecture~\ref{conj:isolated_conical_USC}): we prove that regular-value stability holds in all dimensions, isolate a purely combinatorial ``chamber coarsening'' reduction that is dimension-free, and formulate a conjecture for isolated conical singularities under blow-up uniqueness and frequency pinching.
\end{enumerate}
We also recall that Payne-type stability for the second Dirichlet eigenfunction on convex domains was established in \cite{MukherjeeSaha2022}.

\subsection{Organisation of the paper}\label{subsec:plan} In Section~\ref{Sec: Preliminary results}, we prove technical lemmata on stability of indices and nodal critical points, and establish that in the absence of nodal critical points, the nodal domain count is unchanged under perturbation (Proposition~\ref{prop:nodal_line_no_crossing}). In Section~\ref{Sec: Perturbation and critical points}, we prove the boundary intersection count invariance (Lemma~\ref{lem:nod_lin_disc_invariant}) and our first main result, Theorem~\ref{thm:loc_nod_est}. In Section~\ref{Sec: Proofs of main theorem 2}, we prove our second main result, Theorem~\ref{thm:main_result_A}, via the dual incidence graph. In Section~\ref{sec:quant_branchfree}, we prove the branch-free cluster USC (Theorem~\ref{thm:cluster_USC}), the wavelength rigidity result (Theorem~\ref{thm:meso_no_loops}), and the openness of Courant-sharpness (Theorem~\ref{thm:CS_open}). In Section~\ref{sec:changing_topology}, we treat localised (topology-changing) perturbations and prove Theorems~\ref{thm:Freitas_app} and~\ref{thm: prescription_nodal_intersection_multiple_bdry}. In Section~\ref{sec:higher_dim}, we prove a dimension-free regular-value stability theorem and a higher-dimensional USC reduction via chamber coarsening (Propositions~\ref{prop:regular_value_all_dim} and~\ref{prop:abstract_USC_highdim}), and formulate a conjecture for isolated conical singularities (Conjecture~\ref{conj:isolated_conical_USC}). In the Appendix, we prove a uniform inner-radius lower bound $\gtrsim\lambda^{-1/2}$ for nodal domains along the eigenbranch.

\section{Stability on surfaces: preliminaries and local index/sector calculus}\label{Sec: Preliminary results}
\subsection{Stability of isolated critical points under perturbation} 
Let $\mathfrak{X}^1(M)$ denote the Banach space of $C^1$-vector fields on $M$. Let us denote the set of all zeros of $X\in \mathfrak{X}^1(M)$ by $\CCC$. For a vector field $X$ and an {\em isolated zero} $p\in \CCC$, we use $\ind_p(X)$ to denote the index of the vector field $X$ at $p$.  Later we shall extend the notion of index to compact subsets of zeros of $X$ that admit \emph{isolating neighbourhoods}. We call an open set $U\subset M$ an \emph{isolating neighbourhood} of a compact subset $\tilde{\CCC}\subset\CCC$ if $U$ is diffeomorphic to the Euclidean disc, $\tilde{\CCC}\subset U$, and $X\neq 0$ on $\overline{U}\setminus\tilde{\CCC}$. In particular, $\CCC\cap\partial U=\emptyset$. We use the fact that in dimension $2$, the index of an isolated zero can be measured by the winding number of the curve $X \circ \gamma$ around $0$ where $\gamma$ is any positively oriented smooth parametrisation of $\pa U$.
We have the following proposition:
  \begin{proposition}\label{prop: winding number is constant}
    Let $X_t \in \mathfrak{X}^1(M)$ be a 1-parameter family of vector fields on $M$ such that the map $t \mapsto X_t \in \mathfrak{X}^1(M)$
    is continuous at $t_0 \in \RR$ (in the $C^1$-topology).
    Let $p \in M$ be an isolated zero of $X_{t_0}$ and $U$ be an isolating neighbourhood of $p$. For a positively oriented smooth parametrisation $\gamma$ of $\pa U$,  the winding number $w(t)$ of $X_t \circ \gamma$ around $0$ is continuous at $t_0$, and hence locally constant.
\end{proposition}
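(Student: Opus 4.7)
The plan is to reduce the statement to the classical fact that the winding number of a continuous family of loops in $\RR^2\setminus\{0\}$ is locally constant. Since $U$ is (by our standing assumption) diffeomorphic to a disc, $TM|_{\overline{U}}$ admits a smooth trivialisation; I would fix once and for all a smooth frame of $TM$ over $\overline{U}$. In this frame, each $X_t$ is represented by a $C^1$ map $\overline{U}\to\RR^2$, and $X_t\circ\gamma$ becomes a $C^1$ loop in $\RR^2$ whose winding number around the origin (when defined) is $w(t)$.

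The first step is to show that $w(t)$ is actually well defined on a neighbourhood of $t_0$. By the choice of $U$, the field $X_{t_0}$ is nowhere zero on the compact set $\pa U$, so $|X_{t_0}|\geq m$ on $\pa U$ for some $m>0$. The assumed $C^1$-continuity of the map $t\mapsto X_t\in\mathfrak{X}^1(M)$ at $t_0$ gives, in particular, uniform $C^0$-convergence on $\overline{U}$, and hence there exists $\varepsilon>0$ such that $|X_t - X_{t_0}|<m/2$ on $\pa U$ for all $|t-t_0|<\varepsilon$. Consequently $|X_t|\geq m/2$ on $\pa U$, so $X_t\circ\gamma$ avoids the origin and $w(t)$ is defined.

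Next I would invoke the standard integral formula for the winding number: writing $X_t\circ\gamma(s)=(f_t(s),g_t(s))$ for $s\in S^1$,
\begin{equation*}
w(t)=\frac{1}{2\pi}\int_{S^1}\frac{f_t\,g_t' - g_t\,f_t'}{f_t^2 + g_t^2}\,ds.
\end{equation*}
Under the $C^1$-hypothesis on $t\mapsto X_t$, the functions $f_t$, $g_t$ and their $s$-derivatives all depend continuously on $t$ in the uniform norm on $S^1$, while the denominator is uniformly bounded below by $m^2/4$ on the cylinder $S^1\times(t_0-\varepsilon,t_0+\varepsilon)$. Hence the integrand is jointly continuous in $(s,t)$ and the integral $w(t)$ is continuous at $t_0$. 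Since $w(t)\in\ZZ$, a continuous integer-valued function is locally constant at $t_0$, which proves the claim.

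The only slightly delicate point is securing the uniform lower bound on $|X_t|$ over $\pa U$ for $t$ near $t_0$; once that is in hand the rest is a routine continuity-under-the-integral argument combined with the integer-valuedness of the winding number. The choice of trivialising frame is immaterial because we use the same frame throughout the one-parameter family, so any change of frame would shift every $w(t)$ by the same integer and would not affect local constancy.
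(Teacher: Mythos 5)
Your proposal is correct and follows essentially the same route as the paper: express the winding number by the standard integral formula, obtain uniform lower bounds on $|X_t|$ along $\pa U$ from $C^1$-continuity, and pass to the limit in the integral to conclude continuity and hence local constancy of the integer-valued function $w$. You are somewhat more explicit than the paper on two points the paper leaves implicit — fixing a trivialisation of $TM$ over $\overline{U}$ so that $X_t\circ\gamma$ genuinely lives in $\RR^2$, and verifying that $w(t)$ is well defined (i.e.\ $X_t$ avoids zero on $\pa U$) for $t$ near $t_0$ — but the underlying argument is the same.
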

\begin{proof} 
     We work in a local trivialisation of $TM$ near $p$, identifying vector fields with $\CC$-valued functions. It is sufficient to prove that the function
    \begin{equation*}
        t \mapsto w(t)=\frac{1}{2\pi i} \oint_{X_t \circ \gamma} \frac{dz}{z}
    \end{equation*}
   is continuous at $t_0$, where $\gamma$ is a positively oriented parametrisation of $\pa U$. Let $\eta_t: \SSS^1 \to \CC \setminus \{0\}$ be defined by $\eta_t= X_t \circ \gamma$; for each $t$, $\eta_t$ is $C^1$ in $s$, and $t\mapsto \eta_t$ is continuous in $C^1(\SSS^1)$ near $t_0$. Then the above integral can be re-written as 
   \begin{equation*}
       w(t) = \frac{1}{2\pi i}\int_0^1 \frac{\eta_t' (s)}{\eta_t(s)}ds.
   \end{equation*}
   Since $t\mapsto X_t$ is continuous in $C^1$ near $t_0$ and $X_{t_0}\neq 0$ on the compact set $\gamma(\SSS^1)\subset\pa U$, there exist $m,M,M_1>0$ such that $0<m\le|\eta_t(s)|\le M$ and $|\eta_t'(s)|\le M_1$ for all $s\in\SSS^1$ and all $t$ sufficiently close to $t_0$. Therefore $\eta_t'/\eta_t\to\eta_{t_0}'/\eta_{t_0}$ uniformly on $\SSS^1$, giving $w(t)\to w(t_0)$. Since $w(t)\in\ZZ$, continuity at $t_0$ implies that $w$ is locally constant near $t_0$.
\end{proof}
 Let $p_1, \dots, p_n$ be a finite collection of isolated zeros of a $C^1$ vector field $X$, all contained in an isolating neighbourhood $U$. Then, the {\em total index} of $X$ at the set $\tilde{\CCC}:=\{p_i:i=1,\cdots,n\} \subset \CCC$ is defined to be the sum $\sum_{j = 1}^n \ind_{p_j} (X)=: \ind_{\tilde{\CCC}}(X)$.  The total index equals the winding number of $X \circ \gamma$ around $0$, where $\gamma$ is any positively oriented smooth parametrisation of $\pa U$. Indeed, choose pairwise disjoint small discs $D_i\subset U$ with $p_i\in D_i$ and positively oriented boundaries $\gamma_i$; by excision/additivity of the degree (the map $X/|X|:\partial U\to S^1$ is homotopic, through nonvanishing maps, to the concatenation of $X/|X|:\partial D_i\to S^1$ via paths in $U\setminus\bigcup_i D_i$ where $X\neq 0$), the winding number of $X\circ\gamma$ equals the sum of the winding numbers of $X\circ\gamma_i$, each of which equals $\ind_{p_i}(X)$.

The set of zeros of a vector field is not necessarily discrete in general, e.g.\ when $X=\nabla \varphi$ for a Laplace eigenfunction $\varphi$ (as mentioned in the introduction). We cannot define the index of a vector field at a set of non-isolated zeros by simply summing over individual indices. However, \cite[Section~1.1.2]{Brasselet87} provides a definition of the total index of a vector field at any compact subset of zeros that admits an isolating neighbourhood with smooth boundary. We quote the key result:
\begin{theorem}\cite[Theorem 1.1.2.]{Brasselet87}\label{thm: extended vector field -total index}
Let $U \subset M$ be a domain with smooth boundary $\pa U$ and $X\in \mathfrak{X}^1(M)$ such that $X \neq 0$ in a neighbourhood $V$ of $\pa U$. Then
\begin{enumerate}
    \item The restriction of $X$ to $V$ can be extended to a vector field $\tilde X$ on $U$ with finitely many isolated zeros $p_1,\dots,p_r$.
    \item The total index $\sum_{j=1}^r\ind_{p_j}(\tilde X)$ is independent of the choice of extension. In other words, if $\tilde{X}_1$ and $\tilde{X}_2$ are two such extensions with corresponding finite sets of isolated zeros $\tilde{\mathcal{C}}_1$ and $\tilde{\mathcal{C}}_2$ respectively, then $\ind_{\tilde{\mathcal{C}}_1}(\tilde{X}_1)=\ind_{\tilde{\mathcal{C}}_2}(\tilde{X}_2)$.
\end{enumerate}
\end{theorem}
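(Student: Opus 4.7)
The plan is to handle the two assertions separately. Both are classical consequences of the winding-number characterisation of index in dimension two, as laid out in the paragraph immediately preceding the statement; the first assertion additionally requires a transversality argument, while the second is essentially a boundary invariance observation.

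For part (1), the strategy is to view $X|_V$ as fixed boundary data and to modify $X$ inside $U$ (away from $\pa U$) so that the resulting vector field has only finitely many isolated zeros. First I would fix a smooth cutoff $\chi$ supported in $V$ with $\chi \equiv 1$ on a smaller collar $V' \subset V$ of $\pa U$. Any candidate vector field of the form $\chi X + (1-\chi) Y$, for $Y \in \mathfrak{X}^1(U)$, agrees with $X$ near $\pa U$. I would then apply Thom's transversality theorem to the $1$-jet section: a $C^1$-dense set of $Y$ makes the $1$-jet of $\chi X + (1-\chi) Y$ transverse to the zero section, which is precisely the condition that every zero is non-degenerate, and hence isolated. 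Compactness of $\overline{U}$ then forces the zero set to be finite. An equivalent and in practice cleaner route is to extend $X|_V$ to a gradient-like vector field of a Morse function on $U$, producing the desired isolated critical points by construction.

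For part (2), I would invoke the boundary characterization of total index established in the paragraph preceding the theorem: for any admissible extension $\tilde{X}$ with isolated zero set $\tilde{\mathcal{C}} = \{p_1, \ldots, p_r\} \subset U$,
\[
\ind_{\tilde{\mathcal{C}}}(\tilde{X}) \;=\; \frac{1}{2\pi}\oint_{\tilde{X} \circ \gamma} \frac{dz}{z},
\]
where $\gamma$ is a smooth parametrisation of $\pa U$. Given two extensions $\tilde{X}_1, \tilde{X}_2$ of $X$, they coincide with $X$ on $V \supset \pa U$; hence $\tilde{X}_1 \circ \gamma = X \circ \gamma = \tilde{X}_2 \circ \gamma$ as curves in $\CC \setminus \{0\}$. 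The two winding numbers therefore coincide, yielding $\ind_{\tilde{\mathcal{C}}_1}(\tilde{X}_1) = \ind_{\tilde{\mathcal{C}}_2}(\tilde{X}_2)$.

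The main obstacle is the cleanliness of the transversality step in part (1): the perturbation must be smooth, supported strictly inside $U \setminus V'$ (to preserve the boundary data), and should simultaneously render every zero non-degenerate. This calls for a relative version of Thom's theorem applied to the affine subspace of vector fields agreeing with $X$ on $V'$, which is standard but must be stated carefully. Part (2), by contrast, contains essentially no technical content beyond the winding-number identification already developed in the excerpt, and reduces to the observation that the quantity computing the total index depends only on $\tilde{X}|_{\pa U}$.
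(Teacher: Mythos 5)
The paper does not prove Theorem~\ref{thm: extended vector field -total index}; it quotes it verbatim from Brasselet's notes and uses it as a black box, so there is no internal proof to compare your argument against. That said, your proposal is essentially correct and matches the standard argument one would supply.

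Part~(2) is handled exactly as one should: the winding-number identification of the total index developed in the paragraph immediately preceding the theorem depends only on the restriction of the vector field to $\pa U$, and any two admissible extensions agree with $X$ there, so the equality is immediate. Part~(1) is where the care is needed, and you flag the right issue. Two remarks. First, for the affine family $\chi X + (1-\chi)Y$ the evaluation map fails to be a submersion in the $Y$-variable precisely where $\chi = 1$; but there the field equals $X$, which is nonzero on $V \supset V'$, so the parametric transversality theorem (applied to the jet extension over $U \setminus V'$) does give that a generic $Y$ yields only non-degenerate, hence isolated, zeros, and compactness of $\overline{U \setminus V'}$ gives finiteness. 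You should say this explicitly rather than gesture at "a relative version of Thom's theorem". Second, a more elementary route avoids infinite-dimensional transversality altogether: since $U$ is a disc (as the paper assumes), trivialise $TU$, pick a cutoff $\psi$ vanishing near $\pa U$ and equal to $1$ off $V$, and set $\tilde X := X - \psi v$ for a small constant vector $v$. Near $\pa U$ this equals $X$; on the transition region $X$ is bounded away from zero so $\tilde X \neq 0$ for $|v|$ small; and on $U \setminus V$, Sard's theorem applied to $X$ makes $v$ a regular value for almost every small $v$, so the zeros there are non-degenerate. This is the same idea with lighter machinery. Either way, your proposal is sound.
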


Consider a connected component $\tilde{\mathcal{C}}$ of the set of zeros $\mathcal{C}$ of the vector field $X$. 
Let $\mathcal{T}$ be the \textit{cellular tubular neighbourhood} of $\tilde{\mathcal{C}}$ with smooth boundary as defined in \cite[Section 1.1.2]{Brasselet87}. We note that for any open neighbourhood $U$ of $\tilde{\mathcal{C}}$, there exists a cellular tubular neighbourhood $\mathcal{T}$ such that $\tilde{\mathcal{C}}\subset \mathcal{T} \subset U$. We always assume that a neighbourhood $U$ of a set of zeros is tubular and it satisfies $\mathcal{C}\cap U = \tilde{\mathcal{C}}$ and $\mathcal{C} \cap \pa U=\emptyset$. The vector field $X$ can be extended (call it $\tilde{X}$) to the interior of $U$ with finitely many isolated zeros $\{p_1,p_2,...,p_n\}$. As defined in \cite[Definition 1.1.3]{Brasselet87}, the \textit{total index} of the vector field $X$ at $\tilde{\mathcal{C}}$ is the sum $ \ind_{\tilde{\mathcal{C}}}(X):=\sum_{i=1}^n \ind_{p_i}(\tilde{X})$. Since the sum is independent of the choice of the extension $\tilde{X}$ by Theorem \ref{thm: extended vector field -total index}, the definition of total index is well-defined. By a similar argument as in the isolated case, the total index of $X$ at any connected component $\tilde{\CCC}$ of the zero set $\CCC$ is given by the winding number of $X \circ \gamma$ around $0$, where $\gamma$ is any positively oriented smooth parametrisation of $\pa U$.  So, we have the following generalisation of Proposition \ref{prop: winding number is constant}:

\begin{corollary}\label{cor: winding number is constant, non-isolated}
    Let $X_t \in \mathfrak{X}^1(M)$ be a 1-parameter family of vector fields on $M$ such that the map $t \mapsto X_t \in \mathfrak{X}^1(M)$ is continuous at $t_0 \in \RR$ (in the $C^1$-topology) and $\mathcal{C}_t$ be the corresponding set of zeros.
    Let $\tilde{\mathcal{C}}_{t_0}$ be a finite union of connected components of $\mathcal{C}_{t_0} \subset M$. For each component, choose an isolating neighbourhood (a disc); let $U$ be their disjoint union. Define the total winding number $w(t)$ as the sum of the winding numbers of $X_t\circ\gamma_j$ around $0$ over all positively oriented boundary components $\gamma_j$ of $\partial U$. Then $w(t)$ is continuous at $t_0$, and hence, for all $t$ sufficiently close to $t_0$, $X_t\neq 0$ on $\partial U$ and the total index $\ind_{\mathcal{C}_t \cap U}(X_t)$ is locally constant. 
\end{corollary}

We apply the above 
to eigenfunctions of the Schr\"{o}dinger operator $\Ht$ on a 1-parameter family of smooth Riemannian manifolds $M_t := (M, g_t), t\in \RR$. Since the eigenbranch satisfies $\varphi_t\to\varphi_0$ in $C^\infty$ (as recorded in the standing framework), the map $t\mapsto\nabla\varphi_t$ is in particular continuous in the $C^1$-topology, which is the regularity needed for the preceding results.
In what follows, we assume without any loss of generality that $t_0 = 0$ for convenience of notation.  
We now have the following proposition:
\begin{proposition} \label{prop: stability of critical points}
    Let $p$ be an isolated critical point of $\varphi_{0}$ with $\ind_p(\nabla \varphi_{0})$ positive (respectively, negative). For each neighbourhood $U$ of $p$, there exists $\epsilon>0$ such that if $|t|<\epsilon$ the perturbed eigenfunction $\varphi_t$ has a set of critical points in $U$ whose total index is positive (respectively, negative).
\end{proposition}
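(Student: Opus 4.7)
The plan is to deduce this proposition as a direct application of Corollary \ref{cor: winding number is constant, non-isolated} to the one-parameter family of vector fields $X_t := \nabla \varphi_t$. First I would verify that the hypotheses of that corollary are in force: since $g_t$ is a smooth family of metrics and $\varphi_t$ is a smooth one-parameter family of eigenfunctions converging in $C^\infty$, the expression $(\nabla \varphi_t)^i = g_t^{ij}\partial_j \varphi_t$ in local coordinates shows that $t \mapsto \nabla \varphi_t$ is $C^1$ into $\mathfrak{X}^1(M)$ at $t_0 = 0$.

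Next I would choose a convenient neighbourhood. Given any open $U \ni p$, I would shrink it to an open set $V \subset U$ diffeomorphic to a disc, containing no critical points of $\varphi_0$ other than $p$ (possible since $p$ is isolated), with smooth boundary $\partial V$. On the compact set $\partial V$ one has $|\nabla \varphi_0| \geq \delta$ for some $\delta > 0$, and by the $C^\infty$ convergence $\nabla \varphi_t \to \nabla \varphi_0$ uniformly on $\partial V$, so there exists $\epsilon > 0$ such that $|\nabla \varphi_t| \geq \delta/2$ on $\partial V$ for all $|t| < \epsilon$. In particular $\mathcal{C}_t \cap \partial V = \emptyset$, so that the winding number construction and the total index $\ind_{\mathcal{C}_t \cap V}(\nabla \varphi_t)$ at the (possibly non-isolated) set of zeros inside $V$ are well defined as described right before Corollary \ref{cor: winding number is constant, non-isolated}.

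Now I would invoke Corollary \ref{cor: winding number is constant, non-isolated} itself: for any smooth parametrisation $\gamma$ of $\partial V$, the winding number $w(t)$ of $\nabla \varphi_t \circ \gamma$ about $0$ is locally constant in $t$ near $0$, and by the identification discussed in the paragraph preceding that corollary, $w(t) = \ind_{\mathcal{C}_t \cap V}(\nabla \varphi_t)$ and $w(0) = \ind_p(\nabla \varphi_0)$. Shrinking $\epsilon$ if necessary, it follows that
\[
    \ind_{\mathcal{C}_t \cap V}(\nabla \varphi_t) \;=\; \ind_p(\nabla \varphi_0)
\]
for all $|t| < \epsilon$. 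Since $\mathcal{C}_t \cap V \subset \mathcal{C}_t \cap U$, the total index on the right-hand side has the same sign (positive, respectively negative) as $\ind_p(\nabla \varphi_0)$, and the set of critical points in $U$ on which this total index is computed is automatically non-empty (as a nonzero index forces at least one zero).

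The main obstacle, and really the only non-routine point, is the justification that the winding number over $\partial V$ truly captures the total index of a possibly non-isolated component of $\mathcal{C}_t$ lying inside $V$; this is precisely the content of Theorem \ref{thm: extended vector field -total index} together with the well-definedness of the total index via any extension having only isolated zeros, and it is what makes the argument go through without assuming the perturbed critical set is discrete. Everything else reduces to continuity of $\nabla \varphi_t$ and the locally constant nature of the integer-valued winding number, both of which are already secured by Corollary \ref{cor: winding number is constant, non-isolated}.
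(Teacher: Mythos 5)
Your proof is correct and follows essentially the same approach as the paper: both rest on Corollary \ref{cor: winding number is constant, non-isolated} to preserve the winding number of $\nabla\varphi_t\circ\gamma$ around $0$, then identify that winding number with the total index. The only cosmetic difference is that you conclude directly that the total index is preserved (indeed equal to $\ind_p(\nabla\varphi_0)$), while the paper phrases the last step as a contradiction argument showing the perturbed critical set cannot be empty; your version is slightly more explicit about verifying the $C^1$ hypothesis on $t\mapsto\nabla\varphi_t$ and about shrinking $U$ so that $\partial V$ avoids $\mathcal{C}_t$, steps which the paper leaves implicit.
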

\begin{proof}
 Choose an isolating neighbourhood $V\Subset U$ of $p$ (a disc around $p$ so small that $\nabla\varphi_0\neq 0$ on $\overline{V}\setminus\{p\}$). Since $\ind_p(\nabla \varphi_0) \neq 0$, the winding number of the curve $\nabla \varphi_0 \circ \gamma$ around $0$ is non-zero, where $\gamma$ is a positively oriented parametrisation of $\partial V$. An application of Corollary \ref{cor: winding number is constant, non-isolated} to $X_t= \nabla \varphi_t$ implies that the winding number of $\nabla \varphi_t \circ \gamma$ around $0$ is a non-zero constant for $|t|\le\epsilon$. If $\nabla\varphi_t\neq 0$ on $\overline{V}$, then the map $\nabla\varphi_t/|\nabla\varphi_t|:\overline{V}\to S^1$ provides a continuous extension of the boundary map to $V$, forcing the winding number (= degree of the boundary map) to be zero, a contradiction. 
 Therefore $\varphi_t$ must have critical points in $V\subset U$ with non-zero total index and matching sign as $\ind_p(\nabla\varphi_0)$.
\end{proof}

Generally, it is possible to have a smooth vector field $X$ with an isolated zero $p$ such that $\ind_p(X) = 0$. However, for nodal critical points of eigenfunctions, the local structure forces the index to be strictly negative:
\begin{obsv}\label{obsv:2.4}
   At any nodal critical point $p$ of an eigenfunction $\varphi$ of $\Ht$ with vanishing order $k\ge 2$, $\ind_p(\nabla \varphi) = 1-k<0$. Indeed, by the Hartman--Wintner/Bers--Cheng expansion \cite{HartmanWintner1953,alessandrini1992}, in isothermal coordinates centred at $p$,
   \[
   \varphi(r,\theta)=ar^k\cos k(\theta-\theta_0)+O(r^{k+1}),\qquad a\neq 0.
   \]
   The gradient of the leading term $P_k=ar^k\cos k(\theta-\theta_0)$ is a homogeneous vector field of degree $k-1$ with an isolated zero at the origin whose winding number is $1-k$. Since $\nabla\varphi-\nabla P_k=O(r^{k-1+\alpha})$ for some $\alpha>0$, the winding number of $\nabla\varphi$ on a sufficiently small circle equals that of $\nabla P_k$, hence $\ind_p(\nabla\varphi)=1-k$.
\end{obsv}

\begin{remark}
At an isolated nondegenerate (Morse) saddle point, $\ind_p(\nabla\varphi)=-1$; at an isolated local maximum or minimum, $\ind_p(\nabla\varphi)=+1$. These are standard facts. However, for a general isolated saddle of a Schr\"{o}dinger eigenfunction (without the Morse nondegeneracy hypothesis), the index need not be negative.
\end{remark}

The last observation implicitly assumes that nodal critical points are isolated in the set of all critical points. This is justified by the following proposition (the latter part is contained in \cite{Cheng1976}).

\begin{proposition}\label{prop:nod_crt_isol}
    Any nodal critical point $p$ of an eigenfunction $\varphi$ of $\Ht$ is isolated in the set of critical points of $\varphi$. In particular, the number of nodal critical points is finite.
\end{proposition}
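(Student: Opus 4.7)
The plan is to rely on the Bers--Cheng local expansion for Laplace eigenfunctions and analyze the vanishing of $\nabla \varphi$ explicitly. First, fix a nodal critical point $p$ with order of vanishing $k \geq 2$ (since $p$ is a nodal critical point, $k \geq 2$). In conformal local coordinates centered at $p$, Bers' theorem gives an expansion
\[
\varphi(x) = P_k(x) + R(x), \qquad R(x) = O(|x|^{k+1}),
\]
where $P_k$ is a nonzero homogeneous harmonic polynomial of degree $k$ on $\RR^2$. Standard elliptic regularity upgrades this to a differentiated statement: $\nabla \varphi(x) = \nabla P_k(x) + O(|x|^k)$.

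Next, I would analyze $\nabla P_k$ directly. Up to a rotation, $P_k(x,y) = \alpha \,\mathrm{Re}(z^k) + \beta \,\mathrm{Im}(z^k)$ for some constants $(\alpha, \beta) \neq 0$, so $\nabla P_k$ is a homogeneous vector field of degree $k-1$ whose only zero is the origin. By compactness of the unit circle, $|\nabla P_k|$ attains a positive minimum $c > 0$ there, and by homogeneity $|\nabla P_k(x)| \geq c |x|^{k-1}$ for all $x$ in the coordinate chart. Combining this with the error estimate yields
\[
|\nabla \varphi(x)| \;\geq\; c|x|^{k-1} - C|x|^{k} \;=\; |x|^{k-1}\bigl(c - C|x|\bigr) \;>\; 0
\]
for $0 < |x| < \delta$, where $\delta := c/(2C)$. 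Hence $p$ is the unique critical point of $\varphi$ in this punctured coordinate ball, which establishes that $p$ is isolated in $\mathcal{C}(\varphi)$.

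For the second assertion, I would use a compactness argument. The set of nodal critical points $\mathcal{S}(\varphi)$ is closed in $M$, since it is the intersection of $\mathcal{N}(\varphi)$ with the closed set $\mathcal{C}(\varphi)$. Any accumulation point $q$ of $\mathcal{S}(\varphi)$ would lie in $\mathcal{S}(\varphi)$ itself, but by the first part such a $q$ must admit a neighbourhood containing no other critical point, giving a contradiction. Therefore $\mathcal{S}(\varphi)$ is a discrete closed subset of the compact manifold $M$ and is thus finite.

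The only subtle step is the differentiated Bers expansion $\nabla \varphi = \nabla P_k + O(|x|^k)$; the leading-order expansion for $\varphi$ itself is classical, but passing to derivatives requires either invoking the fact that $R$ solves an elliptic equation with good control on coefficients (so $C^{k+1}$-bounds on $R$ follow from Schauder estimates), or citing the stronger form of the Bers theorem established in \cite{Cheng1976}. Once that is in hand, the rest of the argument is essentially a one-line estimate combined with compactness.
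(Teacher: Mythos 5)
Your proof is correct and follows essentially the same route as the paper: both rely on the Bers local expansion of $\varphi$ at $p$ and deduce that $\nabla\varphi$ cannot vanish near $p$ except at $p$ itself, then finish by compactness. You phrase the key estimate as $|\nabla\varphi(x)| \geq c|x|^{k-1} - C|x|^{k}$ using the homogeneity of $\nabla P_k$, whereas the paper differentiates the full Bers series in polar coordinates and shows that any nearby critical point would force $\cos k\theta = O(r)$ and $\sin k\theta = O(r)$ — a visibly equivalent contradiction; the differentiated expansion you flag as the one subtle point is precisely what the paper's term-by-term convergence analysis is devoted to establishing, and your $O$-notation presentation is arguably the cleaner of the two.
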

\begin{proof}
    Suppose $\varphi$ vanishes at $p$ to finite order $k\ge 2$. Choose isothermal coordinates centred at $p$, so that locally the equation for $\varphi$ takes the form
    \[
    -\Delta u + q(x)u = 0
    \]
    with $q$ smooth. By Chen \cite[Theorem~2.1, Corollary~2.2, Remark~2.3]{Chen1997}, applied in dimension $2$, there exist $\alpha>0$ and a nonzero homogeneous harmonic polynomial $P_k$ of degree $k$ such that
    \[
    \varphi(x)=P_k(x)+O(|x|^{k+\alpha}),
    \qquad
    \nabla \varphi(x)=\nabla P_k(x)+O(|x|^{k-1+\alpha}).
    \]
    Since $P_k$ is a nonzero homogeneous harmonic polynomial of degree $k$ in two variables, after a rotation we may write
    \[
    P_k(r,\theta)=ar^k\cos k(\theta-\theta_0),
    \qquad a\neq 0.
    \]
    Hence
    \[
    |\nabla P_k(r,\theta)|=|a|kr^{k-1}.
    \]
    Therefore, for $x\neq p$ sufficiently close to $p$, the error term is lower order and we get
    \[
    |\nabla \varphi(x)|\ge c|x-p|^{k-1}>0
    \]
    for some constant $c>0$. Thus $p$ is isolated in the critical set $\mathcal C(\varphi)$.

    The second part follows from compactness of $M$: the set of nodal critical points is discrete, hence closed and finite.
\end{proof}

We now prove that the set of nodal critical points $\mathcal{S}_t:=\mathcal{S}(\varphi_t)$ of $\varphi_t$ remains close to $\mathcal{S}(\varphi_0)$ under small perturbations. Throughout, all geodesic balls $B(p,r)$ are taken with respect to the fixed background metric $g_0$, and we choose $r$ below the injectivity radius of $(M,g_0)$.
    \begin{proposition}\label{prop: stability - general}
    The set of nodal critical points is stable under perturbation in the following sense: if $p_1, \dots, p_n$ are the nodal critical points of $\varphi_0$, then there exists $r_0>0$ such that for every $0<r\le r_0$ there exists $\delta>0$ (depending on $r$) with 
\begin{equation*}
    \mathcal{S}_t \subset \bigcup_{i=1}^n B(p_i,r) \subset M \quad \text{ whenever } |t|<\delta.
\end{equation*}
\end{proposition}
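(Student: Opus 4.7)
The plan is to argue by a compactness-and-uniform-control argument on the complement of the balls $B(p_i,r)$. The main point is that a nodal critical point is characterised by the simultaneous vanishing of $\varphi$ and $\nabla\varphi$, and this pair of conditions is stable under $C^1$-convergence.

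First, I would invoke Proposition \ref{prop:nod_crt_isol} to conclude that $\mathcal{S}(\varphi_0)=\{p_1,\dots,p_n\}$ is indeed a finite set, so that the open cover $\bigcup_{i=1}^n B(p_i,r)$ makes sense and has compact complement. Set
\[
K_r := M \setminus \bigcup_{i=1}^n B(p_i,r),
\]
which is compact. On $K_r$, the eigenfunction $\varphi_0$ has no nodal critical points, so the continuous non-negative function
\[
h_0(x) := \varphi_0(x)^2 + |\nabla \varphi_0(x)|_{g_0}^2
\]
is strictly positive on $K_r$; by compactness it attains a positive minimum $m := \min_{K_r} h_0 > 0$.

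Next, I would use the $C^\infty$-convergence $\varphi_t \to \varphi_0$ (and hence $C^1$-convergence, with gradients taken in a fixed background metric close to $g_t$, as stipulated before Theorem \ref{thm:loc_nod_est}) to conclude that
\[
h_t(x) := \varphi_t(x)^2 + |\nabla \varphi_t(x)|^2
\]
converges uniformly to $h_0$ on $M$. Hence there exists $\delta>0$ such that $\sup_{x\in M}|h_t(x)-h_0(x)| < m/2$ for all $|t|<\delta$. Consequently, $h_t(x) \geq m/2 > 0$ on $K_r$ for all $|t|<\delta$, which precludes the existence of any point $q\in K_r$ at which both $\varphi_t(q)=0$ and $\nabla\varphi_t(q)=0$. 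In particular, $\mathcal{S}_t \cap K_r = \emptyset$, which is exactly the desired inclusion $\mathcal{S}_t \subset \bigcup_{i=1}^n B(p_i,r)$.

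The argument has essentially no obstacle — it is a soft compactness statement that relies solely on (i) the finiteness of $\mathcal{S}(\varphi_0)$ already established in Proposition \ref{prop:nod_crt_isol}, and (ii) the assumed smooth convergence $\varphi_t \to \varphi_0$ together with the fact that the nodal-critical-point condition cuts out a closed subset of $M$ on which a continuous positive function can be bounded below by a uniform constant. The only mild subtlety worth flagging is that the norm $|\nabla\varphi_t|$ must be computed in a metric that depends on $t$; but since $g_t\to g_0$ smoothly, this at worst introduces a multiplicative factor close to $1$ and does not affect the strict positivity argument above.
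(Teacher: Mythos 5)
Your proposal is correct and uses essentially the same compactness-and-uniform-convergence strategy as the paper. The only stylistic difference is that you package the two vanishing conditions into the single function $h_t = \varphi_t^2 + |\nabla\varphi_t|^2$, whose uniform positivity on $K_r$ you propagate in one step, whereas the paper separates $\mathcal{N}(\varphi_0)\cap M'$ from $\mathcal{C}_0\cap M'$ by disjoint open sets $U$ and $V$ and then controls $|\varphi_t|$ from below on $M'\setminus U$ and $\|\nabla\varphi_t\|$ from below on $M'\setminus V$ separately. Your bundled version is a touch more streamlined and avoids introducing the auxiliary neighbourhoods, but the underlying idea — a positive lower bound on a compact exterior region, preserved under $C^1$-small perturbation — is identical, and your remark about comparing $g_t$-norms to a fixed background norm is exactly the convention the paper adopts before Theorem \ref{thm:loc_nod_est}.
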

    \begin{proof}
   By Proposition~\ref{prop:nod_crt_isol}, each nodal critical point $p_i$ is isolated in the full critical set $\mathcal C(\varphi_0)$. Hence there exists $r_0>0$ such that for any $0<r\le r_0$ the $g_0$-geodesic balls $B(p_i,r)$ are pairwise disjoint, each punctured ball $\overline{B(p_i,r)}\setminus\{p_i\}$ contains no nodal critical points of $\varphi_0$, and $\partial B(p_i,r)\cap\mathcal{C}(\varphi_0)=\emptyset$. Fix such an $r$ and define $M':= M\setminus \bigcup_{i=1}^n B(p_i,r)$.
   
   On $M'$, the eigenfunction $\varphi_0$ has no nodal critical points, so
   \[
   F_0(x):=|\varphi_0(x)|+\|\nabla^{g_0}\varphi_0(x)\|_{g_0}>0\qquad\text{for all }x\in M'.
   \]
   Since $M'$ is compact, $F_0\ge 2m$ on $M'$ for some $m>0$. By $C^\infty$-convergence of $\varphi_t\to\varphi_0$ (and hence $C^1$-convergence of $\nabla^{g_0}\varphi_t\to\nabla^{g_0}\varphi_0$, where both gradients are computed with the fixed background metric $g_0$ as declared in the standing framework), there exists $\delta>0$ such that
   \[
   |\varphi_t(x)|+\|\nabla^{g_0}\varphi_t(x)\|_{g_0}\ge m>0\qquad\text{for all }x\in M',\ |t|<\delta.
   \]
   Therefore $\varphi_t$ and $\nabla^{g_0}\varphi_t$ cannot vanish simultaneously on $M'$, so all nodal critical points of $\varphi_t$ lie inside $\bigcup_{i=1}^n B(p_i,r)$.
\end{proof}
   \begin{remark}\label{remark: mod_prop_2.6}
     In the proof of Proposition \ref{prop: stability - general} we can make $\delta$ uniform under small change in the radius $r$: for $\rho>0$ small enough, $\mathcal{S}_0\cap N_\rho(r)=\emptyset$ where $N_\rho(r):=\bigcup_{i=1}^n\overline{B(p_i,r+\rho) \setminus B(p_i,r-\rho)}$, and then the same argument applied to $r-\rho$ gives $\mathcal{S}_t \subset \bigcup_{i=1}^n B(p_i,r-\rho)$ for all $|t|<\delta$.
   \end{remark}
For each nodal critical point $p_i$, by first choosing $r\le r_0$ so that $\partial B(p_i,r)\cap\mathcal C(\varphi_0)=\emptyset$ (guaranteed by Proposition~\ref{prop:nod_crt_isol}), and then applying Lemma~\ref{lem:lim_M_1}(ii), we may also ensure that $\partial B(p_i,r)$ contains no critical points of $\varphi_t$ at all for small $|t|$. Moving forward, the choice of a sufficiently small $r$ will be made with these conditions in mind for every $p_i$ simultaneously.

The following is an easy to prove (but fundamental) fact about $C^k$ convergence of eigenfunctions, $k \geq 1$ that was implicitly used in the proof of Proposition \ref{prop: stability - general}.
\begin{Lemma}\label{lem:lim_M_1}
Let $t_n\to 0$ and $x_n\in M$.
(i) If $x_n\in \mathcal N(\varphi_{t_n})$ for all $n$ and $x_n\to x$, then $x\in \mathcal N(\varphi_0)$.
(ii) If $x_n\in \mathcal C(\varphi_{t_n})$ for all $n$ and $x_n\to x$, then $x\in \mathcal C(\varphi_0)$.
\end{Lemma}
\begin{proof}
Part~(i): $\varphi_{t_n}(x_n)=0$ and $\varphi_{t_n}\to\varphi_0$ uniformly ($C^0$-convergence) give $\varphi_0(x)=\lim\varphi_{t_n}(x_n)=0$. Part~(ii): similarly, $d\varphi_{t_n}(x_n)=0$ and $C^1$-convergence of $\varphi_{t_n}\to\varphi_0$ give $d\varphi_0(x)=0$.
\end{proof}

For a proof in a more general setting, see \cite[Lemma 3.5]{MukherjeeSaha2022}. This has the following implication:
\begin{proposition}\label{prop:nodal_line_no_crossing}
     If $\varphi_0$ has no nodal critical point, then $\nu(\varphi_t) = \nu(\varphi_0)$ for sufficiently small $t$.
\end{proposition}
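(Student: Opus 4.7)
The plan is to combine Proposition~\ref{prop: stability - general} with a tubular-neighbourhood argument around each component of $\NNN(\varphi_0)$. Since $\varphi_0$ has no nodal critical points, the set $\mathcal{S}_0$ is empty, so the conclusion of Proposition~\ref{prop: stability - general} forces $\mathcal{S}_t \subset \emptyset$ for all sufficiently small $|t|$. Consequently every connected component of $\NNN(\varphi_t)$ is a smoothly embedded closed curve on $M$, and by compactness only finitely many such components occur. Write $\NNN(\varphi_0) = \Gamma_1 \sqcup \cdots \sqcup \Gamma_N$; each $\Gamma_j$ is a smooth embedded circle.

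Next I would set up a canonical identification of the components of $\NNN(\varphi_t)$ with those of $\NNN(\varphi_0)$. Choose small pairwise disjoint tubular neighbourhoods $T_1,\ldots,T_N$ of the $\Gamma_j$'s such that $\nabla\varphi_0$ is nonvanishing on $\overline{T_j}$ (this is possible since $\Gamma_j$ contains no critical points of $\varphi_0$). The $C^\infty$ convergence $\varphi_t \to \varphi_0$ then gives $\nabla\varphi_t \neq 0$ on $\overline{T_j}$ for small $|t|$, so $\NNN(\varphi_t)\cap T_j$ is a smooth $1$-submanifold transverse to the normal direction of $\Gamma_j$. An implicit function theorem argument, together with $C^0$-closeness, shows that for $|t|$ small enough this submanifold is a single embedded circle $\Gamma_j^t$ that converges to $\Gamma_j$ in the $C^\infty$ topology. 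On the complement $M\setminus\bigcup_j T_j$, which is compact, $\varphi_0$ is uniformly bounded away from zero, hence so is $\varphi_t$ for small $|t|$. Combining the two observations yields $\NNN(\varphi_t) = \bigsqcup_{j=1}^N \Gamma_j^t$.

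To transfer the nodal-domain count, I would invoke the isotopy extension theorem to produce a smooth ambient isotopy $h_s:M\to M$, $s\in[0,1]$, with $h_0=\mathrm{id}$, $h_1(\Gamma_j)=\Gamma_j^t$ for each $j$, and $h_s$ equal to the identity outside $\bigcup_j T_j$. The homeomorphism $h_1$ then sends $M\setminus\NNN(\varphi_0)$ bijectively onto $M\setminus\NNN(\varphi_t)$, so the connected components correspond, and therefore $\nu(\varphi_t) = \nu(\varphi_0)$.

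The main delicate point is the implicit function theorem step: one must ensure that no component $\Gamma_j$ splits into several nearby nodal circles, and that no additional nodal circle appears from nowhere. Both issues are settled by the nonvanishing of $\nabla\varphi_0$ along each $\Gamma_j$ (equivalent to the absence of nodal critical points) together with the $C^\infty$-closeness of $\varphi_t$ to $\varphi_0$; once this local picture is secured, the remaining isotopy argument is standard differential topology.
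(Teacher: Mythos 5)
Your proof is correct, but it follows a somewhat different route than the paper's at the two delicate points. Where you rule out "splitting" or "extra" components of $\NNN(\varphi_t)$ inside each tube $T_j$, you argue via the implicit function theorem: since $\nabla\varphi_0$ is nowhere zero on $\overline{T_j}$, and in fact (shrinking $T_j$ if necessary) the normal derivative of $\varphi_0$ across $\Gamma_j$ is bounded away from zero, $C^1$-closeness forces the normal derivative of $\varphi_t$ to be nonvanishing on $\overline{T_j}$, so $\NNN(\varphi_t)\cap T_j$ is a graph over $\Gamma_j$ and hence a single circle. The paper instead extracts Hausdorff-convergent subsequences of the nodal sets, shows each $U_i$ contains at least one component via sign changes, and rules out two or more components in $U_i$ by observing that this would force a whole nodal domain of $\varphi_t$ to sit inside $U_i$, contradicting the inner radius (Faber--Krahn) lower bound. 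You also make the final counting step explicit via an ambient isotopy supported in $\bigcup_j T_j$, whereas the paper leaves the passage from "same number of circles, each close to a reference circle" to "same number of nodal domains" implicit. Both routes are sound; the Faber--Krahn route has the virtue of also working in the presence of nodal critical points (and is reused in that generality in Section~4), whereas your transversality/IFT argument is cleaner in this special case but does not extend directly once $\nabla\varphi_0$ is allowed to vanish on the nodal set. One small point to tighten in your write-up: nonvanishing of $\nabla\varphi_t$ on $\overline{T_j}$ by itself only makes $\NNN(\varphi_t)\cap T_j$ a smooth $1$-manifold; to get a single graph over $\Gamma_j$ you need to quantify that the \emph{normal} component of $\nabla\varphi_t$ stays nonzero, which does follow from $C^1$-closeness once you note $\nabla\varphi_0$ is normal to its own level set $\Gamma_j$ and bounded away from zero there.
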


\begin{proof}
    If $\NNN(\varphi_0)=\emptyset$ (i.e.\ $\varphi_0$ has constant sign), then $m:=\min_M|\varphi_0|>0$ and $C^\infty$-convergence gives $|\varphi_t|\ge m/2>0$ for small $|t|$, hence $\nu(\varphi_t)=1=\nu(\varphi_0)$.
    
    Now assume $\NNN(\varphi_0)\neq\emptyset$. Since $\varphi_0$ has no nodal critical points, $0$ is a regular value of $\varphi_0$. Define $F:M\times(-\varepsilon,\varepsilon)\to\RR$ by $F(x,t):=\varphi_t(x)$. The map $F$ is smooth (by the standing assumption that the eigenbranch depends smoothly on $t$, not merely by convergence at $t=0$). For every $x\in\NNN(\varphi_0)$ the differential $d_x F(\cdot,0)=d\varphi_0(x)$ is nonzero. Since $\NNN(\varphi_0)$ is compact, there is a neighbourhood $U_0$ of $\NNN(\varphi_0)$ on which $|d\varphi_0|$ is bounded below. On the complementary compact set $K:=M\setminus U_0$ we have $|\varphi_0|\ge c>0$, hence by uniform convergence $|\varphi_t|\ge c/2$ on $K$ for small $|t|$. Therefore every zero of $\varphi_t$ lies in $U_0$, where $|d\varphi_t|>0$ by $C^1$-closeness; equivalently, $0$ is a regular value of $\varphi_t$ for all sufficiently small $|t|$. It follows that $\Sigma:=F^{-1}(0)\subset M\times(-\varepsilon,\varepsilon)$ is a smooth submanifold. The projection $\pi:\Sigma\to(-\varepsilon,\varepsilon)$ is a proper submersion: properness holds because $M$ is compact; submersivity holds because at each $(x,t)\in\Sigma$ the differential $d_x\varphi_t\neq 0$ implies that $T_{(x,t)}\Sigma$ projects surjectively onto the $t$-axis.
    
    By Ehresmann's fibration theorem, $\pi$ is a locally trivial fibre bundle. The trivialisation provides a smooth family of embeddings $e_t:\NNN(\varphi_0)\hookrightarrow M$ with $e_0=\mathrm{id}$ and $e_t(\NNN(\varphi_0))=\NNN(\varphi_t)$ (here we identify $\pi^{-1}(t)$ with $\NNN(\varphi_t)\subset M$ via projection to the first factor). For small $|t|$, $\NNN(\varphi_t)$ is therefore a compact embedded $1$-submanifold of the closed surface $M$, i.e.\ a finite disjoint union of embedded circles. By the isotopy extension theorem for compact submanifolds of smooth manifolds (see Hirsch \cite[Chapter 8, \S1, pp. 177 - 180]{Hirsch1976}), the smooth isotopy $e_t$ extends to a smooth ambient isotopy $E_t:M\to M$ with $E_0=\mathrm{id}_M$ and $E_t|_{\NNN(\varphi_0)}=e_t$. In particular, $M\setminus\NNN(\varphi_t)$ is homeomorphic to $M\setminus\NNN(\varphi_0)$ for all $|t|<\varepsilon$, and hence $\nu(\varphi_t)=\nu(\varphi_0)$.
    \end{proof}

\begin{remark}[Sequential version of the equality]\label{rem:sequential_noncritical}
The equality $\nu(\varphi_t)=\nu(\varphi_0)$ in Proposition~\ref{prop:nodal_line_no_crossing} remains valid in the purely sequential setting: for any sequence $u_n$ of eigenfunctions of $-\Delta_{g_{t_n}}+\Pot_{t_n}$ with $t_n\to 0$, $u_n\to u_0$ in $C^\infty(M)$, and $u_0$ an eigenfunction of $\Ht[0]$ with no nodal critical point, one has $\nu(u_n)=\nu(u_0)$ for $n\gg 1$. No smooth eigenbranch in $t$ is needed.

The proof above cannot be run verbatim in this setting: without joint smoothness of $F(x,t):=\varphi_t(x)$ in $(x,t)$, the zero set $\Sigma=F^{-1}(0)$ need not be a smooth submanifold over the $t$-axis, so Ehresmann's theorem does not apply. A direct argument nonetheless suffices, which we briefly sketch.

Assume $\NNN(u_0)\neq\emptyset$; the constant-sign case is handled exactly as in the proof above. Since $u_0$ has no nodal critical point, $\NNN(u_0)\cap\mathcal C(u_0)=\emptyset$, and both sets are compact, so $\mathrm{dist}(\NNN(u_0),\mathcal C(u_0))>0$. Write $\NNN(u_0)=C_1\sqcup\cdots\sqcup C_p$ as a disjoint union of embedded smooth circles, and fix
\[
  0<\delta<\tfrac12\min\Bigl\{\mathrm{dist}(C_i,C_j)\ (i\neq j),\ \ \mathrm{dist}(\NNN(u_0),\mathcal C(u_0))\Bigr\}.
\]
Let $U_j$ be the $\delta$-tubular neighbourhood of $C_j$; for $\delta$ below the injectivity radius, each $U_j$ is an embedded open annulus. Set $U:=\bigsqcup_j U_j$. The second constraint on $\delta$ ensures $\overline U\cap\mathcal C(u_0)=\emptyset$, hence $|du_0|\ge \eta>0$ on $\overline U$ for some $\eta$.

\smallskip
\emph{Step 1 (localisation).} $\NNN(u_n)\subset U$ for $n\gg 1$. If not, along a subsequence there exist $x_n\in\NNN(u_n)\setminus U$; by compactness of $M$ a further subsequence converges to some $x\in M\setminus U$, and Lemma~\ref{lem:lim_M_1}(i) gives $x\in\NNN(u_0)\setminus U$, a contradiction.

\smallskip
\emph{Step 2 (existence).} Each $U_j$ contains at least one component of $\NNN(u_n)$ for $n\gg 1$. Fix a short smooth arc $\gamma_j\subset U_j$ transverse to $C_j$, with endpoints $p_j^\pm$ satisfying $u_0(p_j^-)<0<u_0(p_j^+)$. By $C^0$-closeness, $u_n(p_j^-)<0<u_n(p_j^+)$ for $n\gg 1$, and the intermediate value theorem applied to $u_n|_{\gamma_j}$ produces a zero of $u_n$ on $\gamma_j\subset U_j$.

\smallskip
\emph{Step 3 (smooth 1-manifold structure).} By $C^1$-closeness on $\overline U$ we have $|du_n|\ge\eta/2$ on $\overline U$ for $n\gg 1$; in particular $\NNN(u_n)\cap\overline U$ is a smooth $1$-submanifold. Since $u_0\neq 0$ on $\partial U_j$ (by the first constraint on $\delta$), $C^0$-closeness gives $u_n\neq 0$ on $\partial U_j$ for $n\gg 1$. Hence every connected component of $\NNN(u_n)\cap U_j$ is a compact embedded circle lying in the interior of $U_j$.

\smallskip
\emph{Step 4 (uniqueness).} Each $U_j$ contains at most one component of $\NNN(u_n)$ for $n\gg 1$. Suppose for contradiction that $\NNN(u_n)\cap U_j$ has $\ge 2$ components. A disjoint union of $k\ge 2$ embedded circles in an open annulus divides it into at least $k+1$ connected components, but only at most two of these touch $\partial U_j$ (the two boundary circles of $U_j$ have definite signs, since $u_n\neq 0$ on $\partial U_j$). Hence at least one component $D$ of $U_j\setminus\NNN(u_n)$ is compactly contained in $U_j$, with $u_n$ of definite sign on $D$ and vanishing on $\partial D$. Then $D$ is a nodal domain of $u_n$ with $\mathrm{Area}_{g_{t_n}}(D)\le \mathrm{Area}_{g_{t_n}}(U_j)\le C_0\delta$, where $C_0$ depends only on the length of $C_j$ and on $C^2$-closeness of $g_{t_n}$ to $g_0$. By Lemma~\ref{lem:small-domain-Schro} applied to $u_n$ on $D$,
\[
  \lambda_{t_n}\ \ge\ \frac{C_{\mathrm{FK}}}{C_0\delta}-\sup_n\|\Pot_{t_n}\|_{L^\infty}.
\]
The right-hand side tends to $\infty$ as $\delta\to 0$, contradicting $\lambda_{t_n}\to\lambda_0$ for $\delta$ small.

\smallskip
Steps 1-4 give a bijection between components of $\NNN(u_0)$ and components of $\NNN(u_n)$, and the sign pattern of $u_n$ on $M\setminus\NNN(u_n)$ matches that of $u_0$ on $M\setminus\NNN(u_0)$ (by $C^0$-closeness on compact subsets of $M\setminus U$). Hence $\nu(u_n)=\nu(u_0)$ for $n\gg 1$. The price of dropping the smooth-branch assumption is that this argument yields only the count equality, not an ambient isotopy of $M$ carrying $\NNN(u_0)$ to $\NNN(u_n)$.
\end{remark}

\section{Global nodal domain monotonicity: proofs of Theorems \ref{thm:loc_nod_est} and \ref{thm:main_result_A}}\label{Sec: Perturbation and critical points}
 
Before beginning with the proofs, we introduce the following crucial lemma which shall be used in multiple instances throughout the remainder of the article. 

\begin{Lemma}\label{lem:nod_lin_disc_invariant}
    Let $p$ be an isolated nodal critical point of $\varphi_0$ with order of vanishing $k$. Then there exists $r_0>0$ such that for \emph{every} $r\in(0,r_0)$, the circle $\partial B(p,r)$ meets $\mathcal N(\varphi_0)$ transversely in exactly $2k$ points, and for all sufficiently small $|t|$ (depending on $r$),
    \[
      \#\bigl(\mathcal N(\varphi_t)\cap\partial B(p,r)\bigr)\ =\ 2k.
    \]
\end{Lemma}

\begin{proof}
Choose isothermal coordinates centred at $p$ and define $B(p,r)$ to be the coordinate disc of radius $r$ in these coordinates (for $r$ below the injectivity radius, this is comparable to the $g_0$-geodesic ball). By the Bers--Cheng expansion,
\[
\varphi_0(r,\theta)=ar^k\cos k(\theta-\theta_0)+O(r^{k+1}),\qquad a\neq 0.
\]
For any fixed $r\in(0,r_0)$, the boundary trace $f_0(\theta):=\varphi_0(r,\theta)$ satisfies $r^{-k}f_0(\theta)=a\cos k(\theta-\theta_0)+O(r)$. For $r$ small, $r^{-k}f_0$ is a $C^1$-small perturbation of $a\cos k(\theta-\theta_0)$, hence has exactly $2k$ simple zeros on $[0,2\pi)$. In particular, $\partial B(p,r)$ meets $\mathcal N(\varphi_0)$ transversely at exactly $2k$ points.

Since $\varphi_t\to\varphi_0$ in $C^\infty$, the restriction $f_t:=\varphi_t|_{\partial B(p,r)}$ converges to $f_0$ in $C^1(\partial B(p,r))$. At each simple zero $\theta_j$ of $f_0$, the implicit function theorem gives a unique simple zero $\theta_j(t)$ of $f_t$ near $\theta_j$ for small $|t|$. Since $|f_0|$ is bounded below on the complement of small intervals around the zeros, $f_t$ has no other zeros for $|t|$ small. Hence $\#(\mathcal N(\varphi_t)\cap\partial B(p,r))=2k$.
\end{proof}

\begin{remark}\label{Remark: 3.2.}
For the remainder of the section, we fix once and for all a radius $r\in(0,r_0)$ satisfying additionally $\partial B(p,r)\cap\mathcal C(\varphi_t)=\emptyset$ for small $|t|$. This can be arranged: first choose $r$ so that $\partial B(p,r)\cap\mathcal C(\varphi_0)=\emptyset$ (possible since $p$ is isolated in $\mathcal C(\varphi_0)$ by Proposition~\ref{prop:nod_crt_isol}), then apply Lemma~\ref{lem:lim_M_1}(ii) by contradiction: if $x_n\in\partial B(p,r)\cap\mathcal C(\varphi_{t_n})$ with $t_n\to 0$, then by compactness $x_n\to x\in\partial B(p,r)\cap\mathcal C(\varphi_0)$, contradicting the choice of $r$.
\end{remark}

The following auxiliary lemma isolates the Jordan-curve-plus-small-domain argument that is used repeatedly below. It simultaneously rules out two a priori distinct configurations: smooth $S^1$ components of $\mathcal N(\varphi_t)$ disjoint from $\partial B(p,r)$ (used in the next paragraph), and cycles in the nodal graph $G_t$ that traverse interior vertices (used in Lemma~\ref{lem:forest}).

\begin{Lemma}[Closed-loop exclusion]\label{lem:no-closed-loop}
For $r$ sufficiently small and $|t|$ sufficiently small, the set $\mathcal N(\varphi_t)\cap\overline{B(p,r)}$ contains no embedded topological circle (smooth or piecewise smooth) that is compactly contained in $B(p,r)$.
\end{Lemma}
\begin{proof}
Suppose $\Gamma\subset B(p,r)$ is such an embedded circle. Since $\overline{B(p,r)}$ is a topological disc, by the Jordan curve theorem $\Gamma$ bounds a simply connected region $D\subset B(p,r)$. As $\varphi_t$ changes sign across the smooth portion of its nodal set and the finitely many nodal critical points on $\Gamma$ form a negligible set, $\varphi_t$ is of definite sign on a nonempty open subset of $D$; extending this subset to a maximal sign-definite component gives a nodal domain of $\varphi_t$ contained in $D\subset B(p,r)$. By Lemma~\ref{lem:small-domain-Schro}, applied uniformly for small $|t|$ (since $\lambda_t\to\lambda_0$ and $\|\Pot_t\|_{C^0}$ is bounded in the standing framework), this is impossible once $r$ is small enough.
\end{proof}

\paragraph{The full nodal graph.}
We now impose a graph structure on $\mathcal N(\varphi_t)\cap\overline{B(p,r)}$. Define the \emph{full nodal graph} $G_t$ as the embedded planar graph in $\overline{B(p,r)}$ with:
\begin{itemize}
\item \emph{interior vertices}: the nodal critical points $\mathcal S_t\cap B(p,r)$, each of degree $2\tilde k$ where $\tilde k\ge 2$ is its corresponding vanishing order;
\item \emph{boundary vertices}: the $2k$ points $\mathcal N(\varphi_t)\cap\partial B(p,r)$, each of degree $1$ (because the corresponding zeros of $f_t=\varphi_t|_{\partial B(p,r)}$ are simple, by Lemma~\ref{lem:nod_lin_disc_invariant});
\item \emph{edges}: the closures of the connected components of $(\mathcal N(\varphi_t)\cap\overline{B(p,r)})\setminus\{\text{vertices}\}$.
\end{itemize}
By Lemma~\ref{lem:nod_lin_disc_invariant}, the number of boundary vertices is $2k$ for all small $|t|$. By Lemma~\ref{lem:no-closed-loop}, no connected component of $\mathcal N(\varphi_t)\cap\overline{B(p,r)}$ is a smooth embedded circle contained in $B(p,r)$; every connected component therefore either meets $\partial B(p,r)$ (and is thus incident to at least one boundary vertex) or contains at least one nodal critical point (and is thus incident to at least one interior vertex). In either case it is captured by $G_t$, so $G_t$ exhausts the entire nodal set $\mathcal N(\varphi_t)\cap\overline{B(p,r)}$. Since adjoining the boundary endpoints to the open-ball portion does not change the number of connected components, the number of components $C_t$ of $G_t$ equals $\#\pi_0(\mathcal N(\varphi_t)\cap B(p,r))$ as used in the statement of Theorem~\ref{thm:loc_nod_est}.

\begin{Lemma}[Acyclicity]\label{lem:forest}
For $r$ sufficiently small and $|t|$ sufficiently small, the full nodal graph $G_t$ is a forest (i.e.\ it contains no cycle).
\end{Lemma}

\begin{remark}
We stress that this is a genuinely separate statement from the preceding paragraph. The paragraph rules out smooth circle components of $\mathcal N(\varphi_t)$ (i.e.\ connected components of $\mathcal N(\varphi_t)$ that are embedded $S^1$'s); Lemma~\ref{lem:forest} rules out \emph{cycles in $G_t$}, which may traverse interior vertices and are in general not connected components of $\mathcal N(\varphi_t)$ (e.g.\ two interior vertices joined by two arcs, with additional arcs reaching $\partial B(p,r)$). Both facts are needed for the identity~\eqref{eqn: index_connected_component_relation}: the former for $G_t$ to exhaust the nodal set, the latter for the tree-counting relation $V-E=C_t$. Lemma~\ref{lem:no-closed-loop} provides a single Jordan-plus-small-domain mechanism underlying both.
\end{remark}

\begin{proof}[Proof of Lemma~\ref{lem:forest}]
Since boundary vertices have degree $1$, no cycle in $G_t$ can pass through a boundary vertex; hence any cycle is an embedded topological circle contained in the interior $B(p,r)$. Such a circle is ruled out by Lemma~\ref{lem:no-closed-loop}.
\end{proof}

We now record the key combinatorial identity. Let $C_t$ denote the number of connected components of $G_t$. For each component $G_t^i$ ($i=1,\dots,C_t$), let $m_i$ be the number of interior vertices, $2b_i$ the number of boundary vertices (this is even because all interior vertices have even degree, so the number of odd-degree vertices, i.e.\ the boundary leaves, is even by the handshake lemma), and $\tilde k_{i,l}$ ($l=1,\dots,m_i$) the vanishing orders of the interior vertices. Since $G_t^i$ is a tree with $m_i+2b_i$ vertices, it has $m_i+2b_i-1$ edges. By the handshake lemma, the sum of vertex degrees is twice the edge count:
\[
\sum_{l=1}^{m_i}2\tilde k_{i,l}+2b_i\ =\ 2(m_i+2b_i-1),
\]
which simplifies to
\begin{equation}\label{eqn:forest_identity_component}
\sum_{l=1}^{m_i}(\tilde k_{i,l}-1)\ =\ b_i-1.
\end{equation}
Summing over $i=1,\dots,C_t$ and using $\sum_i 2b_i=2k$ (Lemma~\ref{lem:nod_lin_disc_invariant}):
\begin{equation}\label{eqn: index_connected_component_relation}
\sum_{q\in\mathcal S_t\cap B(p,r)}(\operatorname{ord}_q(\varphi_t)-1)\ =\ \sum_{i=1}^{C_t}\sum_{l=1}^{m_i}(\tilde k_{i,l}-1)\ =\ \sum_{i=1}^{C_t}(b_i-1)\ =\ k-C_t.
\end{equation}
(Here $\operatorname{ord}_q(\varphi_t)$ is the vanishing order of $\varphi_t$ at $q$, which equals $\tilde k$ when the nodal graph vertex at $q$ has degree $2\tilde k$; equivalently, $\operatorname{ord}_q(\varphi_t)-1=|\ind_q(\nabla\varphi_t)|$ by Observation~\ref{obsv:2.4}.)

\medskip

\begin{figure}[ht]
    \centering
    \includegraphics[scale=0.23]{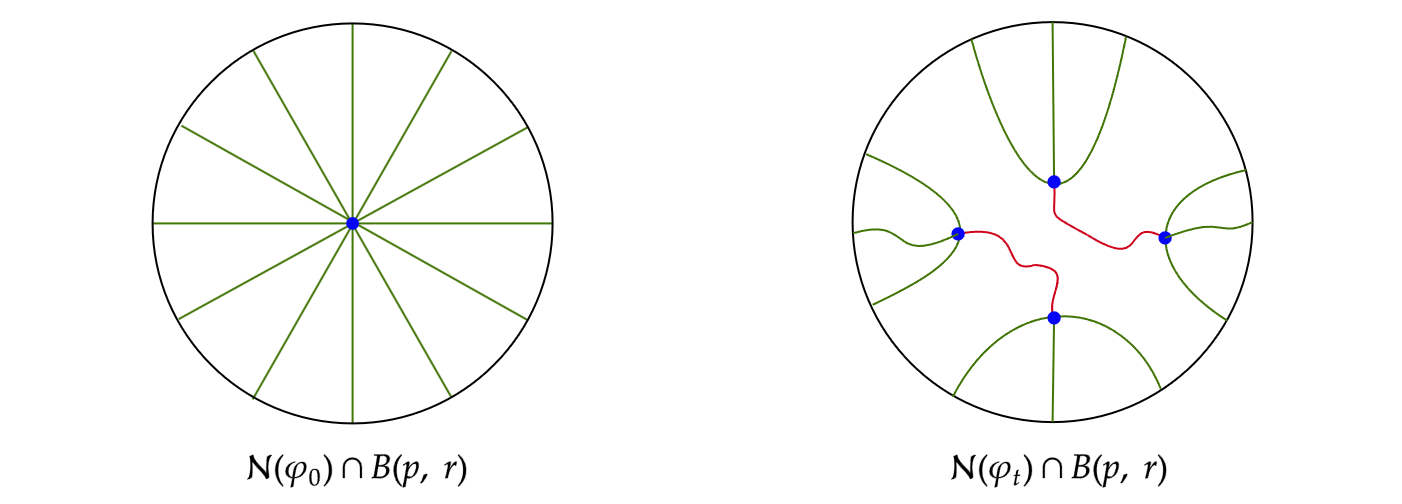}
    \caption{The full nodal graph $G_t$: interior vertices (blue), interior-interior edges (red), and interior-boundary or boundary-boundary edges (green). Boundary vertices (not shown) lie on $\partial B(p,r)$.}
    \label{fig:nodal_graph_ex}
\end{figure}

\subsection{Proof of Theorem \ref{thm:loc_nod_est}(a)}
Given $r$ small enough, there exists $\varepsilon:=\varepsilon(r)>0$ such that for $|t|<\varepsilon$ the number of nodal critical points of $\varphi_t$ inside $B(p,r)$ does not exceed $k-1$.

\smallskip\noindent\emph{Proof.} By Proposition~\ref{prop:nod_crt_isol}, $\mathcal S_t\cap B(p,r)$ is finite; let $m:=|\mathcal S_t\cap B(p,r)|$. By \eqref{eqn: index_connected_component_relation}, $\sum_{q}(\operatorname{ord}_q(\varphi_t)-1)=k-C_t\le k-1$ (since $C_t\ge 1$: $G_t$ has at least $2k\ge 2$ boundary vertices, hence at least one component). Since each summand is $\ge 1$, we get $m\le k-1$. \qed

\subsection{Proof of Theorem \ref{thm:loc_nod_est}(b)}
Given $r$ small enough, there exists $\varepsilon:=\varepsilon(r)>0$ such that for $|t|<\varepsilon$, the order of vanishing of any nodal critical point of $\varphi_t$ within $B(p,r)$ cannot be greater than $k$.

\smallskip\noindent\emph{Proof.} If some $q\in\mathcal S_t\cap B(p,r)$ has vanishing order $l>k$, then $\operatorname{ord}_q(\varphi_t)-1=l-1\ge k$. But \eqref{eqn: index_connected_component_relation} gives $\sum_{q'}(\operatorname{ord}_{q'}(\varphi_t)-1)=k-C_t\le k-1$, a contradiction. \qed

\subsection{Proof of Theorem \ref{thm:loc_nod_est}(c)}\label{subsec:part_c}

We first record a general combinatorial fact about forests in a disc, then apply it to the local nodal graph.

\begin{proposition}[Sector count by Euler's formula]\label{thm:loc_nod_est_alt_version}
Let $F\subset\overline{B(p,r)}$ be a finite embedded forest with $2k$ leaves on $\partial B(p,r)$ (each of degree $1$), and assume that every connected component of $F$ meets $\partial B(p,r)$. Let $C_F$ denote the number of connected components of $F$. Then the number $S(F)$ of connected components of $B(p,r)\setminus F$ is
\[
S(F)\ =\ 2k-C_F+1.
\]
We call these components the \emph{sectors} of $F$.
\end{proposition}
\begin{proof}
Let $V$ and $E$ denote the numbers of vertices and edges of $F$ (counting boundary leaves as vertices). Since every connected component of a forest is a tree, and each tree $T_i$ satisfies $E_i=V_i-1$, summing over the $C_F$ components gives
\[
E=V-C_F.
\]
Now form the \emph{augmented graph} $\widehat F$ by adding the $2k$ arcs of $\partial B(p,r)$ between consecutive boundary leaves as new edges. Because every connected component of $F$ meets the boundary, these boundary arcs connect all components of $F$, so $\widehat F$ is connected. The added arcs lie entirely on $\partial B(p,r)$, so they do not subdivide any interior component of $B(p,r)\setminus F$; they merely complete the boundary cycle. Hence the complementary components of $\widehat F$ in the closed disc $\overline{B(p,r)}$ are exactly the sectors of $F$.

Applying Euler's formula to the connected planar graph $\widehat F$, where the boundary circle $\partial B(p,r)$ is built into the graph (equivalently, gluing a second disc along $\partial B(p,r)$ passes to a sphere with exactly one additional exterior face, turning the spherical formula $V-E+F=2$ into its disc counterpart below), we get
\[
V-(E+2k)+S(F)=1.
\]
Substituting $E=V-C_F$ yields
\[
S(F)=1+(E+2k)-V=1+(V-C_F+2k)-V=2k-C_F+1.\qedhere
\]
\end{proof}

We can now prove Theorem~\ref{thm:loc_nod_est}(c). Recall that $F_t(p)$ denotes the number of connected components of $(M\setminus\NNN(\varphi_t))\cap B(p,r)$, i.e.\ the number of ``local nodal domains'' of $\varphi_t$ in $B(p,r)$. At $t=0$, the Hartman--Wintner/Alessandrini local structure (Section~\ref{Sec: Preliminary results}) gives that the nodal set in $B(p,r)$ is a single $2k$-star (one connected component with $2k$ arcs), so $F_0(p)=2k$.

\smallskip\noindent\emph{Proof of Theorem~\ref{thm:loc_nod_est}(c).}
For $r$ small enough, the full nodal graph $G_t=\NNN(\varphi_t)\cap\overline{B(p,r)}$ is a finite forest with $2k$ boundary leaves (Proposition~\ref{prop:nod_crt_isol}, Lemma~\ref{lem:nod_lin_disc_invariant}, and Lemma~\ref{lem:forest}). Every connected component of $G_t$ meets $\partial B(p,r)$: indeed, if some component did not, then as a finite tree it would have a leaf, but the only degree-$1$ vertices of $G_t$ are the boundary leaves. Let $C_t$ denote the number of connected components of $G_t$. By Proposition~\ref{thm:loc_nod_est_alt_version},
\[
S(G_t)\ =\ 2k-C_t+1.
\]
Since $G_t\cap B(p,r)=\NNN(\varphi_t)\cap B(p,r)$, we have $B(p,r)\setminus G_t=B(p,r)\setminus\NNN(\varphi_t)$, so by definition $F_t(p)=S(G_t)$. Therefore
\[
F_t(p)\ =\ 2k-C_t+1\ =\ F_0(p)-C_t+1.
\]
In particular, $F_t(p)\le F_0(p)$ since $C_t\ge 1$.

Moreover, every connected component of $B(p,r)\setminus\NNN(\varphi_t)$ is incident to at least one boundary arc of $\partial B(p,r)\setminus\NNN(\varphi_t)$. Indeed, if a component $\Omega$ were not incident to any boundary arc, then its only possible contact with $\partial B(p,r)$ would be at nodal points (zeros of $\varphi_t$). Hence $\varphi_t$ has constant sign on $\Omega$ and vanishes on $\partial\Omega\cap\partial B(p,r)$, so $\Omega$ is a nodal domain of $\varphi_t$ contained in $\overline{B(p,r)}$ with diameter $\le 2r$, contradicting Lemma~\ref{lem:small-domain-Schro} for $r$ small enough. \qed

\subsection{Proof of Theorem \ref{thm:main_result_A}} \label{Sec: Proofs of main theorem 2}

Let $p_1,\dots,p_n$ be the nodal critical points of $\varphi_0$, with vanishing orders $k_1,\dots,k_n\ge 2$. Choose $r>0$ small enough that the balls $B_i:=B(p_i,r)$ are pairwise disjoint, $r$ satisfies the conclusions of Proposition~\ref{prop: stability - general}, Lemma~\ref{lem:nod_lin_disc_invariant}, and Lemma~\ref{lem:forest}, and define
\[
M':=M\setminus \bigcup_{i=1}^n B_i.
\]
The surface $M'$ is a compact manifold with boundary $\partial M'=\bigcup_i\partial B_i$.

\begin{Lemma}[Nodal stability on the regular region]\label{Lem: nodal_set_invariance_M'}
For small $|t|$, there exists a diffeomorphism $H_t:M'\to M'$ carrying each boundary circle $\partial B_i$ to itself setwise and satisfying
\[
H_t\bigl(\NNN(\varphi_0)\cap M'\bigr)=\NNN(\varphi_t)\cap M'.
\]
Moreover, on each $\partial B_i$ the $2k_i$ zeros of $\varphi_t$ continue uniquely from those of $\varphi_0$, giving a bijection between the boundary arcs of $\partial B_i\setminus\NNN(\varphi_0)$ and those of $\partial B_i\setminus\NNN(\varphi_t)$. Under this identification:
\begin{enumerate}
\item[(i)] the connected components of $M'\setminus \NNN(\varphi_t)$ are in bijection with those of $M'\setminus \NNN(\varphi_0)$;
\item[(ii)] the sign of $\varphi_t$ on each boundary arc agrees with the sign of $\varphi_0$ on the corresponding arc;
\item[(iii)] if $\Omega_0$ is a component of $M'\setminus\NNN(\varphi_0)$ and $\Omega_t$ is the corresponding component of $M'\setminus\NNN(\varphi_t)$ under the bijection of~(i), then for every boundary arc $A_0$ of $\partial B_i\setminus\NNN(\varphi_0)$, $\Omega_0$ meets $A_0$ if and only if $\Omega_t$ meets the corresponding boundary arc $A_t$ of $\partial B_i\setminus\NNN(\varphi_t)$.
\end{enumerate}

\end{Lemma}

\begin{proof}
By Proposition~\ref{prop: stability - general}, after shrinking $|t|$ if necessary, every nodal critical point of $\varphi_t$ lies in $\bigcup_i B_i$. Hence $0$ is a regular value of $\varphi_t$ on the interior of $M'$.

For each zero $q_j\in\NNN(\varphi_0)\cap\partial B_i$, Lemma~\ref{lem:nod_lin_disc_invariant} gives $\partial_s\varphi_0(q_j)\neq 0$ in a boundary coordinate $s$ on $\partial B_i$. Since $\varphi_t|_{\partial B_i}\to\varphi_0|_{\partial B_i}$ in $C^1$, the implicit function theorem yields a unique nearby zero $q_j(t)\in\partial B_i$ of $\varphi_t$, depending smoothly on $t$, with $\partial_s\varphi_t(q_j(t))\neq 0$. Away from the zeros, $|\varphi_0|$ is bounded below on $\partial B_i$, so $C^0$-convergence gives $\varphi_t\neq 0$ there; hence these are the only boundary zeros. This yields the bijection of boundary arcs. Moreover, $\NNN(\varphi_t)$ meets $\partial B_i$ transversely at each boundary zero, so
\[
N_t:=\NNN(\varphi_t)\cap M'
\]
is a compact properly embedded $1$-submanifold of $M'$, transverse to $\partial M'$, for all small $|t|$.

\emph{Isotopy of the nodal set.} Define $F:M'\times(-\varepsilon,\varepsilon)\to\RR$ by $F(x,t):=\varphi_t(x)$. Since $d_x\varphi_t\neq 0$ at every point of $N_t$ (by regularity in the interior and transversality at $\partial M'$), $0$ is a regular value of the total map $F$, so $\Sigma:=F^{-1}(0)$ is a smooth submanifold of $M'\times(-\varepsilon,\varepsilon)$ with $\partial\Sigma\subset\partial M'\times(-\varepsilon,\varepsilon)$. Let $\pi:\Sigma\to(-\varepsilon,\varepsilon)$ denote the restriction to $\Sigma$ of the second-factor projection $M'\times(-\varepsilon,\varepsilon)\to(-\varepsilon,\varepsilon)$, $(x,t)\mapsto t$; its fibre over $t$ is $N_t\times\{t\}$, i.e.\ the nodal set at parameter $t$. The map $\pi$ is a proper submersion (properness by compactness; submersivity because $d_x\varphi_t\neq 0$ implies $T_{(x,t)}\Sigma$ surjects onto the $t$-axis), and its restriction to $\partial\Sigma$ is also a submersion (by transversality to $\partial M'$).

The boundary portion $\partial\Sigma$ consists of the curves $t\mapsto(q_j(t),t)$ given by the IFT continuation of boundary zeros. By the relative Ehresmann fibration theorem (see e.g.\ \cite[Section~8.1.2]{BroeckerJaenich}), since both $\pi|_\Sigma$ and $\pi|_{\partial\Sigma}$ are proper submersions, there exists a trivialisation of $\Sigma$ over $(-\varepsilon,\varepsilon)$ that \emph{extends} the trivialisation of $\partial\Sigma$ determined by the IFT continuation. In other words, there is a smooth family of embeddings $e_t:N_0\hookrightarrow M'$ with $e_t(N_0)=N_t$, $e_0=\mathrm{id}$, and $e_t(q_j)=q_j(t)$ for every boundary zero $q_j$.

Applying isotopy extension for properly embedded compact submanifolds (see e.g.\ \cite[Theorem~1.3]{Hirsch1976}) to the isotopy $e_t$ yields a diffeomorphism $H_t:M'\to M'$ carrying each boundary component $\partial B_i$ to itself setwise, satisfying $H_t(N_0)=N_t$, and agreeing with $e_t$ on $N_0\cap\partial M'$. In particular, $H_t(q_j)=q_j(t)$ for every boundary zero. This proves (i).

\emph{Proof of \textnormal{(ii)}.} Fix a boundary arc $A_0$ of $\partial B_i\setminus\NNN(\varphi_0)$, bounded by consecutive zeros $q_j,q_{j+1}$. The corresponding arc $A_t$ for $\varphi_t$ is bounded by the continued zeros $q_j(t),q_{j+1}(t)$. Choose any point $p_0\in A_0$; since $\varphi_0(p_0)\neq 0$ and $q_j(t)\to q_j$, $q_{j+1}(t)\to q_{j+1}$ smoothly, for small $|t|$ the point $p_0$ lies in $A_t$. By $C^0$-convergence, $\varphi_t(p_0)$ has the same sign as $\varphi_0(p_0)$. Since $\varphi_t$ is nonzero on $A_t$, the sign of $\varphi_t$ on $A_t$ equals that of $\varphi_0$ on $A_0$.

\emph{Proof of \textnormal{(iii)}.} Let $\Omega_0$ be a component of $M'\setminus N_0$ and $\Omega_t:=H_t(\Omega_0)$ the corresponding component of $M'\setminus N_t$. Since $H_t$ carries $N_0$ to $N_t$ and sends each boundary zero $q_j$ to $q_j(t)$, it carries each boundary arc $A_0$ (bounded by $q_j,q_{j+1}$) into the corresponding arc $A_t$ (bounded by $q_j(t),q_{j+1}(t)$). Therefore $\Omega_t$ meets $A_t$ if and only if $\Omega_0$ meets $A_0$, and the incidence pattern is preserved.
\end{proof}

\paragraph{The dual incidence graph.}
For a fixed $t$, define the \emph{dual incidence graph} $\mathcal G_t$ as the bipartite graph whose vertices are:
\begin{itemize}
\item the connected components of $M'\setminus\NNN(\varphi_t)$ (\emph{exterior vertices});
\item for each ball $B_i$, the connected components of $B_i\setminus\NNN(\varphi_t)$ (\emph{sector vertices}).
\end{itemize}
An exterior vertex $E$ and a sector vertex $S\subset B_i$ are joined by an edge whenever they are adjacent along a boundary arc of $\partial B_i\setminus\NNN(\varphi_t)$.

We claim that the connected components of $\mathcal G_t$ are in bijection with the nodal domains of $\varphi_t$ on $M$. First, every component of $B_i\setminus\NNN(\varphi_t)$ is incident to at least one boundary arc of $\partial B_i\setminus\NNN(\varphi_t)$ (by the last paragraph of the proof of Theorem~\ref{thm:loc_nod_est}(c)). Hence every nodal domain $D$ of $\varphi_t$ intersects $M'$ and the balls $B_i$ in a union of exterior components and sectors; these pieces are pairwise connected through boundary arcs, so the corresponding vertices form a connected subgraph of $\mathcal G_t$. Conversely, let $\Gamma$ be a connected component of $\mathcal G_t$, and let $W$ be the union of the corresponding exterior components and sectors. Since $\Gamma$ is connected, any two pieces in $W$ are linked by a chain of edges (boundary arcs); across each such arc $\varphi_t$ is continuous and nonzero, so the sign is constant throughout $W$. Therefore $W$ is a connected open set on which $\varphi_t$ has constant sign, hence $W$ is contained in a single nodal domain. Since every exterior component and every sector belongs to exactly one nodal domain (by sign-definiteness), these two assignments are inverse, establishing the bijection.

\begin{theorem}\label{thm:perturb_nodal_domain_reduction-saikat}
For small $|t|$,
\[
\nu(\varphi_t)\ \le\ \nu(\varphi_0).
\]
\end{theorem}

\begin{proof}
By Lemma~\ref{Lem: nodal_set_invariance_M'}, the exterior vertices of $\mathcal G_t$ are naturally identified with those of $\mathcal G_0$ (via the diffeomorphism $H_t$), and under this identification the boundary-arc incidences of exterior components are preserved in the sense of part~(iii): an exterior component $\Omega_t$ of $M'\setminus\NNN(\varphi_t)$ meets a boundary arc $A_t$ of $\partial B_i\setminus\NNN(\varphi_t)$ if and only if the corresponding $\Omega_0$ meets the corresponding $A_0$.

Fix a ball $B_i$ and let $k_i$ be the vanishing order of $\varphi_0$ at $p_i$. Let $C_t^{(i)}$ denote the number of connected components of the local nodal graph $G_t$ in $B_i$. For $t=0$, the nodal set in $B_i$ is the standard $2k_i$-star, so $B_i\setminus\NNN(\varphi_0)$ has exactly $2k_i$ sectors, one incident to each boundary arc. For small $t$, Theorem~\ref{thm:loc_nod_est}(c) gives $\#\{\text{sectors in }B_i\}=2k_i-C_t^{(i)}+1\le 2k_i$. Hence the sectors of $\varphi_t$ define a partition of the $2k_i$ boundary arcs that is coarser than the discrete partition at $t=0$. Moreover, each partition block is sign-homogeneous: all arcs in a block are incident to the same sector, on which $\varphi_t$ has constant sign, and by part~(ii) of Lemma~\ref{Lem: nodal_set_invariance_M'} these arcs had the same sign at $t=0$ as well.

Therefore $\mathcal G_t$ is obtained from $\mathcal G_0$ by identifying, within each ball $B_i$, those sector vertices of $\mathcal G_0$ whose corresponding boundary arcs lie in the same sector of $\varphi_t$. This is a quotient of the bipartite graph $\mathcal G_0$ (exterior vertices are untouched; sector vertices are merged according to the partition). Quotienting by vertex identifications cannot increase the number of connected components. Since connected components of $\mathcal G_t$ and $\mathcal G_0$ are in bijection with nodal domains, we conclude
\[
\nu(\varphi_t)=\#\pi_0(\mathcal G_t)\le \#\pi_0(\mathcal G_0)=\nu(\varphi_0).
\]
\end{proof}

This proves the first assertion of Theorem~\ref{thm:main_result_A}; the final assertion (equality when $\varphi_0$ has no nodal critical points) is Proposition~\ref{prop:nodal_line_no_crossing}.

\begin{remark}[Sequential version of the inequality]\label{rem:sequential_USC}
The inequality $\nu(\varphi_t)\le\nu(\varphi_0)$ in Theorem~\ref{thm:main_result_A} holds more generally for any sequence of eigenfunctions $u_n$ of $-\Delta_{g_{t_n}}+\Pot_{t_n}$ with $t_n\to 0$ and $u_n\to u_0$ in $C^\infty(M)$, where $u_0$ is an eigenfunction of $\Ht[0]$. We sketch why. Choose balls $B_i$ around the nodal critical points of $u_0$ and set $M':=M\setminus\bigcup_i B_i$ as before. Three ingredients are needed for the dual-graph quotient argument:
\begin{enumerate}
\item[(a)] \emph{Local inputs.} The boundary intersection counts, forest structure, and sector counts in each $B_i$ depend only on $C^\infty$-convergence of $u_n\to u_0$ on fixed compact subsets. These go through unchanged.
\item[(b)] \emph{Exterior stability.} On $M'$, $0$ is a regular value of $u_0$. By $C^1$-closeness, $0$ is a regular value of $u_n$ on $M'$ for $n\gg 1$. Proposition~\ref{prop:nodal_line_no_crossing} (applied on $M'$ to the pair $u_0,u_n$, using the isotopy extension theorem for compact submanifolds of a fixed manifold with boundary) gives the same number of exterior components and the same boundary-arc incidence.
\item[(c)] \emph{Sign preservation.} Boundary arcs on each $\partial B_i$ carry the same sign for $u_n$ as for $u_0$, by $C^0$-closeness.
\end{enumerate}
With (a)-(c), the proof of Theorem~\ref{thm:perturb_nodal_domain_reduction-saikat} goes through verbatim, giving $\nu(u_n)\le\nu(u_0)$ for $n\gg 1$. The smooth family structure $F(x,t)=\varphi_t(x)$ and Ehresmann's theorem (as a one-parameter fibration) are used only for the equality clause (Proposition~\ref{prop:nodal_line_no_crossing} in the smooth-branch setting), not for the inequality.
\end{remark}

\begin{remark}
The theorem above, together with the local sector-count formula, also yields a quantitative bound on the decrement. For each ball $B_i$, define the \emph{local decrement} $\Delta_i$ as the drop in the number of connected components of the dual graph caused by the coarsening at $B_i$ (so if the coarsening merges $m$ components into one, the decrement is $m-1$). The sector-partition argument (cf.\ Lemma~\ref{lem:sector_partition} below) gives $\Delta_i\le C_t^{(i)}-1$.
\end{remark}

\begin{Lemma}[Sector partition and maximal decrement]\label{lem:sector_partition}
Fix $p\in\mathcal S_0$ with vanishing order $k$, and let $B:=B(p,r)$ be as above. For small $|t|$, the disc boundary $\partial B$ meets $\NNN(\varphi_t)$ in exactly $2k$ points, and the nodal set inside $B$ has $C_t\ge1$ connected components and
\[
k' = 2k - C_t + 1
\]
sectors (Proposition~\ref{thm:loc_nod_est_alt_version}). The sectors $\{S_1,\dots,S_{k'}\}$ induce a partition $\{E_1,\dots,E_{k'}\}$ of the $2k$ boundary arcs $\{I_1,\dots,I_{2k}\}$, where $E_\alpha$ is the set of boundary arcs incident to $S_\alpha$.

Each sector $S_\alpha$ merges all exterior components meeting arcs in $E_\alpha$ into a single connected piece through $S_\alpha$. Since the number of distinct exterior components meeting $E_\alpha$ is at most $|E_\alpha|$, the drop in component count from this sector is at most $|E_\alpha|-1$. Summing over sectors:
\[
\sum_{\alpha=1}^{k'} (|E_\alpha|-1)\ =\ 2k - k' \ =\ C_t - 1.
\]
\end{Lemma}

\begin{proof}
Each sector $S_\alpha$ is connected, so after gluing, all exterior components meeting arcs in $E_\alpha$ become connected through $S_\alpha$, reducing their count by at most $|E_\alpha|-1$. Since $\{E_\alpha\}$ partitions the $2k$ arcs, $\sum_\alpha|E_\alpha|=2k$, giving $\sum_\alpha(|E_\alpha|-1)=2k-k'=C_t-1$.
\end{proof}

As a consequence, we have the following index-theoretic bound on the total decrement.

\begin{corollary}\label{cor:nod_dom_dec_est}
We have
\[
0\ \le\ \nu(\varphi_0)-\nu(\varphi_t)\ \le\ \sum_{p \in \mathcal{S}_0}|\ind_p(\nabla \varphi_0)| \ -\ \sum_{q \in \mathcal{S}_t} |\ind_q(\nabla \varphi_t)|.
\]
\end{corollary}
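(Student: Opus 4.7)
The lower bound $0\le \nu(\varphi_0)-\nu(\varphi_t)$ is exactly the content of Theorem \ref{thm:main_result_A} (equivalently, Theorem \ref{thm:perturb_nodal_domain_reduction-saikat}), so nothing new is required there. For the upper bound, the plan is to reinterpret the two key quantitative inputs already established and then combine them by an additive book-keeping argument over the nodal critical points of $\varphi_0$.

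First I would recall the local decrement estimate \eqref{ineq: upper_bound_decrement} obtained inside the inductive step of the proof of Theorem \ref{thm:perturb_nodal_domain_reduction-saikat}: if $p_{l+1}\in\mathcal{S}_0$ has order of vanishing $k=k(p_{l+1})$ and $C_t(p_{l+1})$ is the number of connected components of $\mathcal{N}(\varphi_t)\cap B(p_{l+1},r)$, then
\[
m_t(l)-m_t(l+1)\ \le\ C_t(p_{l+1})-1.
\]
Telescoping as $l$ runs from $0$ to $|\mathcal{S}_0|-1$, and using that $m_t(0)=\nu(\varphi_0)$ and $m_t(|\mathcal{S}_0|)=\nu(\varphi_t)$ (here I implicitly use Lemma \ref{Lem: nodal_set_invariance_M'} to identify the global count with the count on $M'_{|\mathcal{S}_0|}$), one obtains
\[
\nu(\varphi_0)-\nu(\varphi_t)\ \le\ \sum_{p\in\mathcal{S}_0}\bigl(C_t(p)-1\bigr).
\]

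Next I would convert each $C_t(p)-1$ into an index difference using the inequality \eqref{eqn: index_connected_component_relation} proved for Theorem \ref{thm:loc_nod_est}(a). That bound reads
\[
\sum_{q\in\mathcal{S}_t\cap B(p,r)}|\ind_q(\nabla\varphi_t)|\ \le\ k(p)-C_t(p),
\]
which, together with Observation \ref{obsv:2.4} giving $|\ind_p(\nabla\varphi_0)|=k(p)-1$, yields
\[
C_t(p)-1\ \le\ |\ind_p(\nabla\varphi_0)|\ -\sum_{q\in\mathcal{S}_t\cap B(p,r)}|\ind_q(\nabla\varphi_t)|.
\]
Summing over $p\in\mathcal{S}_0$ and invoking Proposition \ref{prop: stability - general} (which guarantees $\mathcal{S}_t\subset\bigcup_{p\in\mathcal{S}_0}B(p,r)$ for $|t|$ small, so the double sum on the right reassembles into $\sum_{q\in\mathcal{S}_t}|\ind_q(\nabla\varphi_t)|$) delivers exactly
\[
\nu(\varphi_0)-\nu(\varphi_t)\ \le\ \sum_{p\in\mathcal{S}_0}|\ind_p(\nabla\varphi_0)|\ -\ \sum_{q\in\mathcal{S}_t}|\ind_q(\nabla\varphi_t)|.
\]

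There is no serious obstacle here—the result is essentially a matter of assembling estimates already proved—but the step that deserves the most care is verifying that the partition of $\mathcal{S}_t$ into the balls $B(p,r)$, $p\in\mathcal{S}_0$, is both exhaustive and disjoint. Exhaustiveness is Proposition \ref{prop: stability - general} and disjointness follows from our choice of $r$ (made at the start of Section \ref{Sec: Proofs of main theorem 2}) small enough so that the balls $B(p,r)$ around distinct nodal critical points of $\varphi_0$ do not overlap. Once this is checked, the telescoping and additivity described above finishes the argument.
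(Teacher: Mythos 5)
Your proof is correct and follows essentially the same route as the paper's own argument: both deduce the upper bound by telescoping the per-ball decrement $m_t(l)-m_t(l+1)\le C_t(p_{l+1})-1$ from \eqref{ineq: upper_bound_decrement}, converting $C_t(p)-1$ into the index difference $|\ind_p(\nabla\varphi_0)|-\sum_{q\in\mathcal{S}_t\cap B(p,r)}|\ind_q(\nabla\varphi_t)|$ via \eqref{eqn: index_connected_component_relation} together with Observation \ref{obsv:2.4}, and then summing over $p\in\mathcal{S}_0$ using Proposition \ref{prop: stability - general} to reassemble the right-hand side. Your write-up merely makes explicit the two-step telescoping-then-conversion that the paper compresses into a single sentence.
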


\begin{proof}
The global graph $\mathcal G_t$ is obtained from $\mathcal G_0$ by processing the balls $B(p_i,r)$ one at a time: each ball induces a quotient on the current incidence graph, reducing the number of connected components by at most its local decrement. Therefore the total drop satisfies $\nu(\varphi_0)-\nu(\varphi_t)\le\sum_i\Delta_i$.

Fix $p\in \mathcal S_0$ with vanishing order $k$. By the forest identity \eqref{eqn: index_connected_component_relation},
\[
\sum_{q\in \mathcal S_t\cap B(p,r)} |\ind_q(\nabla\varphi_t)| \ =\ k - C_t,
\]
so $C_t-1 = (k-1) - \sum_{q\in \mathcal S_t\cap B(p,r)}|\ind_q(\nabla\varphi_t)| = |\ind_p(\nabla\varphi_0)| - \sum_{q\in \mathcal S_t\cap B(p,r)}|\ind_q(\nabla\varphi_t)|$.
By Lemma~\ref{lem:sector_partition}, the local decrement at $B(p,r)$ is at most $C_t-1$, hence at most $|\ind_p(\nabla\varphi_0)| - \sum_{q\in \mathcal S_t\cap B(p,r)}|\ind_q(\nabla\varphi_t)|$.
Summing over $p\in\mathcal S_0$ yields the claim.
\end{proof}

\section{Strengthening on surfaces}\label{sec:quant_branchfree}

\subsection{Branch-free upper semicontinuity and auxiliary lemmas}\label{sec:analytic_wall}
In this section we record a branch-free formulation of the upper semicontinuity at the level of spectral clusters, so one does not need to select a single simple eigenbranch.

\begin{theorem}[Upper semicontinuity for spectral clusters; branch-free over eigenfunctions]\label{thm:cluster_USC}
Let $(M,g_t)_{t\in(-t_0,t_0)}$ be a $C^\infty$ family on a closed surface with $g_t\to g_0$ in $C^\infty$. Fix $k$ and let $\lambda_{k,0}$ have multiplicity $m\ge1$. Choose $\eta>0$ so that $[\lambda_{k,0}-\eta,\lambda_{k,0}+\eta]$ contains no other spectrum of $(M,g_0)$. For each $t$, let
\[
\mathcal E_t := \{u\in C^\infty(M,\RR):\ -\Delta_{g_t}u=\lambda u,\ \lambda\in{\rm Spec}(-\Delta_{g_t})\cap[\lambda_{k,0}-\eta,\lambda_{k,0}+\eta],\ \|u\|_{L^2}=1\}.
\]
Suppose $u_t\in \mathcal E_t$ and $u_t\to u_0\in \mathcal E_0$ in $C^\infty$. Then
\[
\limsup_{t\to 0}\ \nu(u_t)\ \le\ \nu(u_0).
\]
In particular, $t\mapsto \sup\{\nu(u): u\in \mathcal E_t\}$ is upper semicontinuous at $t=0$.
\end{theorem}

\begin{proof}
After shrinking $t_0$ if necessary, the spectral window $[\lambda_{k,0}-\eta,\lambda_{k,0}+\eta]$ contains exactly $m$ eigenvalues of $-\Delta_{g_t}$ (counting multiplicity). By standard perturbation theory (see \cite{KriegenMichorRainer11}), one may choose a smooth orthonormal frame $\{e_1(t),\dots,e_m(t)\}$ for the associated real spectral subspace. Every $u_t\in\mathcal E_t$ can be written as
\[
  u_t\ =\ \sum_{j=1}^m a_j(t)e_j(t),\qquad \sum_{j=1}^m |a_j(t)|^2=1.
\]
Along any sequence $t_n\to 0$, compactness of the unit sphere in $\RR^m$ yields a subsequence (still denoted $t_n$) with $a_j(t_n)\to a_j^*$ for each $j$, and $\sum|a_j^*|^2=1$. Define $u_0^*:=\sum_{j=1}^m a_j^*e_j(0)$. Since $e_j(t)\to e_j(0)$ in $C^\infty$, we have $u_{t_n}\to u_0^*$ in $C^\infty$. Since we also assumed $u_t\to u_0$, uniqueness of the $C^\infty$-limit gives $u_0^*=u_0$. In particular, $u_0\in\mathcal E_0$ (it lies in the eigenspace of $\lambda_{k,0}$). By Remark~\ref{rem:sequential_USC} (the sequential version of the inequality in Theorem~\ref{thm:main_result_A}), $\nu(u_{t_n})\le\nu(u_0)$ for $n\gg 1$. Since this holds for every sequence $t_n\to 0$, $\limsup_{t\to 0}\nu(u_t)\le\nu(u_0)$.

For the ``in particular'' statement, let $t_n\to 0$ be a sequence with $\sup_{u\in\mathcal E_{t_n}}\nu(u)\to\limsup_{t\to 0}\sup_{u\in\mathcal E_t}\nu(u)$. Since $\nu$ is integer-valued and bounded above by $k+m-1$ (by Courant's theorem), the supremum over $\mathcal E_{t_n}$ is actually a maximum; choose $v_n\in\mathcal E_{t_n}$ achieving it. By the same compactness argument, a subsequence of $v_n$ converges in $C^\infty$ to some $v_0\in\mathcal E_0$. By the sequential inequality, $\nu(v_n)\le\nu(v_0)\le\sup_{u\in\mathcal E_0}\nu(u)$ for $n\gg 1$. Hence $\limsup_{t\to 0}\sup_{u\in\mathcal E_t}\nu(u)\le\sup_{u\in\mathcal E_0}\nu(u)$.
\end{proof}

\begin{Lemma}[Critical points are confined to a shrinking core]\label{lem:NoCritCreation}
Let $u_0$ be a solution of a second-order elliptic equation on a surface near $p$, with a nodal critical point of vanishing order $m\ge2$ at $p$. Then there exist $r_0,c_0>0$ such that
\[
\|\nabla u_0(x)\|_{g_0}\ \ge\ c_0d_{g_0}(x,p)^{m-1}\qquad\text{for all }x\in \overline{B(p,r_0)}\setminus\{p\}.
\]
Consequently, if $\|u_t-u_0\|_{C^1(\overline{B(p,r_0)})}\le \varepsilon$ (norms with respect to a fixed background metric), then every critical point of $u_t$ in $B(p,r_0)$ lies in $B\big(p,(\varepsilon/c_0)^{1/(m-1)}\big)$. In particular, for every fixed $\rho\in(0,r_0)$ there exists $\delta>0$ such that $\mathcal C(u_t)\cap\big(B(p,r_0)\setminus B(p,\rho)\big)=\varnothing$ whenever $\|u_t-u_0\|_{C^1}<\delta$.
\end{Lemma}

\begin{proof}
Since $u_0$ solves a second-order elliptic equation, the Bers--Cheng expansion gives, in isothermal coordinates $(r,\theta)$ centred at $p$,
$u_0(r,\theta)= ar^m\cos m(\theta-\theta_0) + O(r^{m+1})$ with $a\neq0$.
Hence $\|\nabla u_0(x)\|\ge c_0d(x,p)^{m-1}$ for $d(x,p)$ small. If $\nabla u_t(x)=0$ at some $x$ with $d(x,p)=\rho$, then $c_0\rho^{m-1}\le\|\nabla u_0(x)\|=\|\nabla u_0(x)-\nabla u_t(x)\|\le\varepsilon$, so $\rho\le (\varepsilon/c_0)^{1/(m-1)}$.
\end{proof}

We also record two auxiliary lemmas used later.

\begin{Lemma}[Boundary transversality persistence: Dirichlet]\label{lem:Dirichlet_transversality}
Let $u_t$ be a Dirichlet eigenbranch on a surface with smooth boundary. If a boundary point $x_0\in\partial M$ satisfies
\[
\partial_\nu u_0(x_0)=0\quad\text{and}\quad \partial_\tau \partial_\nu u_0(x_0)\neq 0
\]
(i.e.\ $x_0$ is a simple zero of $s\mapsto\partial_\nu u_0(s,0)$), then for small $|t|$ there exists a unique $x_t\in\partial M$ with $x_t\to x_0$ where an interior nodal arc of $u_t$ hits $\partial M$ transversely. When all boundary zeros of $s\mapsto\partial_\nu u_0(s,0)$ are simple, the total number of transverse boundary hits is locally constant in $t$.
\end{Lemma}

\begin{proof}
Choose boundary normal coordinates $(\sigma,\rho)$ near $x_0$, where $\sigma$ is the tangential boundary coordinate and $\rho\ge 0$ is the inward normal coordinate. Since
\[
u_t(\sigma,0)\equiv 0,
\]
Taylor expansion in $\rho$ gives
\[
u_t(\sigma,\rho)=\rho\bigl(a_t(\sigma)+\rho R_t(\sigma,\rho)\bigr),
\]
where
\[
a_t(\sigma):=\partial_\rho u_t(\sigma,0)
\]
and $R_t$ is smooth. (If $\nu$ denotes the outward unit normal, then
$a_t=-\partial_\nu u_t|_{\partial M}$, so zeros and simplicity are unchanged by this sign.)

Set
\[
G(t,\sigma,\rho):=a_t(\sigma)+\rho R_t(\sigma,\rho).
\]
Let $\sigma_0$ be the boundary coordinate of $x_0$. Since $x_0$ is a simple zero of the boundary function $\sigma\mapsto a_0(\sigma)$, we have
\[
G(0,\sigma_0,0)=0,
\qquad
\partial_\sigma G(0,\sigma_0,0)=a_0'(\sigma_0)\neq 0.
\]
Hence the implicit function theorem yields a smooth function
$\sigma=\sigma(t,\rho)$
defined for $(t,\rho)$ near $(0,0)$ such that
$\sigma(0,0)=\sigma_0$ and
$G\bigl(t,\sigma(t,\rho),\rho\bigr)=0$.
Therefore the interior part of the nodal set of $u_t$ near $x_0$ is the smooth arc
\[
\gamma_t(\rho):=\bigl(\sigma(t,\rho),\rho\bigr),
\qquad
\rho\ge 0 \text{ small},
\]
while the boundary itself is the other factor of the zero set coming from the Dirichlet condition. The boundary-hit point is
$x_t=(\sigma_t,0)$ with $\sigma_t:=\sigma(t,0)$,
equivalently the unique zero of $a_t$ near $\sigma_0$.

Since
$\gamma_t'(0)=\bigl(\partial_\rho \sigma(t,0),1\bigr)$
has nonzero inward-normal component, the arc meets $\partial M$ transversely. Since $a_t\to a_0$ in $C^1(\partial M)$, simple zeros persist uniquely and remain finite in number on the compact boundary, so summing gives local constancy of the total number of transverse boundary hits.
\end{proof}

\begin{Lemma}[Boundary transversality persistence: Neumann]\label{lem:Neumann_transversality}
Let $u_t$ be a Neumann eigenbranch on a surface with smooth boundary. If a boundary point $x_0\in\partial M$ satisfies
\[
u_0(x_0)=0\quad\text{and}\quad \partial_\tau u_0(x_0)\neq 0
\]
(i.e.\ $x_0$ is a simple zero of the boundary trace $u_0|_{\partial M}$), then for small $|t|$ there exists a unique $x_t\in\partial M$ with $x_t\to x_0$ such that $u_t(x_t)=0$ and $\partial_\tau u_t(x_t)\neq 0$, and the nodal set of $u_t$ meets $\partial M$ transversely at $x_t$. When all boundary zeros of $u_0|_{\partial M}$ are simple, the total number of boundary-hit points of $\mathcal N(u_t)$ is locally constant in $t$.
\end{Lemma}

\begin{proof}
Choose boundary normal coordinates $(\sigma,\rho)$ near $x_0$, where $\sigma$ is the tangential boundary coordinate and $\rho\ge 0$ is the inward normal coordinate. The Neumann condition gives
\[
\partial_\rho u_t(\sigma,0)=0,
\]
hence
\[
u_t(\sigma,\rho)=b_t(\sigma)+\rho^2 R_t(\sigma,\rho),
\qquad
b_t(\sigma):=u_t(\sigma,0),
\]
with $R_t$ smooth.

Set
$G(t,\sigma,\rho):=b_t(\sigma)+\rho^2 R_t(\sigma,\rho)$.
Let $\sigma_0$ be the boundary coordinate of $x_0$. Since $x_0$ is a simple zero of the boundary trace, we have
\[
G(0,\sigma_0,0)=0,
\qquad
\partial_\sigma G(0,\sigma_0,0)=b_0'(\sigma_0)\neq 0.
\]
Hence the implicit function theorem yields a smooth function
$\sigma=\sigma(t,\rho)$
defined for $(t,\rho)$ near $(0,0)$ such that
$\sigma(0,0)=\sigma_0$ and
$G\bigl(t,\sigma(t,\rho),\rho\bigr)=0$.
Thus the nodal set of $u_t$ near $x_0$ is the smooth arc
\[
\gamma_t(\rho):=\bigl(\sigma(t,\rho),\rho\bigr),
\qquad
\rho\ge 0 \text{ small}.
\]
Differentiating the identity
$G\bigl(t,\sigma(t,\rho),\rho\bigr)=0$
with respect to $\rho$ and evaluating at $\rho=0$ gives
\[
b_t'(\sigma_t)\partial_\rho\sigma(t,0)=0,
\qquad
\sigma_t:=\sigma(t,0),
\]
so $\partial_\rho\sigma(t,0)=0$ because $b_t'(\sigma_t)\neq 0$ for small $|t|$. Hence
$\gamma_t'(0)=(0,1)$,
so the arc meets $\partial M$ transversely. Since $b_t\to b_0$ in $C^1(\partial M)$, simple zeros persist uniquely and remain finite in number on the compact boundary, so summing gives local constancy of the total number of boundary-hit points.
\end{proof}

\begin{theorem}[Openness of Courant-sharpness]\label{thm:CS_open}
Let $g_*$ be a smooth metric on a closed surface $M$ for which the first $k$ eigenvalues are simple, $\nu(\varphi_{\ell,*})=\ell$ for $\ell\le k$, and each $\varphi_{\ell,*}$ has no nodal critical points. Then there exists a $C^2$-neighbourhood $\mathcal U$ of $g_*$ such that, for all $g\in\mathcal U$, the first $k$ eigenvalues remain simple and $\nu(\varphi_\ell(g))=\ell$ for $\ell\le k$.
\end{theorem}

\begin{proof}
Since the first $k$ eigenvalues of $-\Delta_{g_*}$ are simple, standard perturbation theory (see \cite{KriegenMichorRainer11}) gives that for $g$ in a $C^2$-neighbourhood $\mathcal U_1$ of $g_*$, the first $k$ eigenvalues of $-\Delta_g$ remain simple and the corresponding eigenfunctions $\varphi_\ell(g)$ satisfy $\|\varphi_\ell(g)-\varphi_{\ell,*}\|_{C^1(M)}\to 0$ as $g\to g_*$.

Fix $\ell\le k$. Since $\varphi_{\ell,*}$ has no nodal critical points, $0$ is a regular value of $\varphi_{\ell,*}$. By $C^1$-closeness, $0$ remains a regular value of $\varphi_\ell(g)$ for $g\in\mathcal U_1$, the nodal hypersurface $\mathcal N(\varphi_\ell(g))$ is ambiently isotopic to $\mathcal N(\varphi_{\ell,*})$, and hence $\nu(\varphi_\ell(g))=\nu(\varphi_{\ell,*})=\ell$. Taking $\mathcal U:=\mathcal U_1$ completes the proof.
\end{proof}

\subsection{Wavelength rigidity: no sub-wavelength loop creation}\label{sec:mesoscopic}

In this section we show that no connected component of the nodal set can be entirely contained in a ball at the sub-wavelength scale.

\begin{theorem}[No sub-wavelength closed components]\label{thm:meso_no_loops}
Let $(M,g_t,\Pot_t)$ be a $C^\infty$ family of Schr\"{o}dinger data with $g_t\to g_0$ in $C^2$ and $\Pot_t\to\Pot_0$ in $C^0$. Let $\varphi_{k,t}$ be an eigenbranch of $\Ht=-\Delta_{g_t}+\Pot_t$ with $\lambda_{k,0}>0$ and $\varphi_{k,t}\to\varphi_{k,0}$ in $C^\infty$. There exist constants $c_*,t_*>0$ (depending on $(M,g_0,\Pot_0)$ and $\varphi_{k,0}$) such that for all $|t|<t_*$, no connected component of $\mathcal N(\varphi_{k,t})$ is entirely contained in a $g_0$-geodesic ball of radius $c_*\lambda_{k,0}^{-1/2}$.
\end{theorem}

\begin{proof}
Suppose for contradiction that for a sequence $t_j\to 0$ there exist connected components $\Gamma_j\subset\mathcal N(\varphi_{k,t_j})$ entirely contained in $B_{g_0}(x_j, r_0)$ with $r_0:=c_*\lambda_{k,0}^{-1/2}$.

Since $\Gamma_j$ is an \emph{entire} connected component of $\mathcal N(\varphi_{k,t_j})$ and is contained in the open ball $B_{g_0}(x_j,r_0)$, the set $\Gamma_j$ does not meet $\partial B_{g_0}(x_j,r_0)$. For $r_0$ below the injectivity radius of $(M,g_0)$, the ball $B_{g_0}(x_j,r_0)$ is a topological disc. The nodal set $\Gamma_j$ is a compact connected embedded graph (with even-degree vertices) inside this disc. Since every vertex has even degree $\ge 2$ and $\Gamma_j$ is connected, $\Gamma_j$ contains at least one cycle (an Eulerian subgraph has this property; alternatively, a connected graph with all vertices of degree $\ge 2$ contains a cycle). By the Jordan curve theorem applied to this cycle in the simply connected ball, $\Gamma_j$ bounds at least one compact complementary region. Since $\varphi_{k,t_j}$ changes sign across each smooth arc of $\Gamma_j$, at least one sign region is compactly contained in the ball. Among all nodal domains of $\varphi_{k,t_j}$ contained in $B_{g_0}(x_j,r_0)$, choose one; call it $D_j$.

By Lemma~\ref{lem:small-domain-Schro},
\[
\lambda_{k,t_j}-\inf_{D_j}\Pot_{t_j}\ \ge\ \lambda_1^\Delta(D_j;g_{t_j}).
\]
Since $g_t\to g_0$ in $C^2$ on the compact manifold $M$, there exists $c'>0$ (depending only on the curvature of $g_0$ and $r_0$, but not on $j$ or $x_j$) such that $c'^{-1}g_{\mathrm{Euc}}\le g_{t_j}\le c'g_{\mathrm{Euc}}$ in normal coordinates on every ball of radius $r_0$, uniformly for $j$ large. The Euclidean Faber--Krahn inequality gives
\[
\lambda_1^\Delta(D_j;g_{t_j})\ \ge\ (c')^{-1}\lambda_1^\Delta(D_j;g_{\mathrm{Euc}})\ \ge\ \frac{(c')^{-1}\pi j_{0,1}^2}{\mathrm{Area}_{g_{\mathrm{Euc}}}(D_j)}\ \ge\ \frac{C_{\mathrm{FK}}}{\mathrm{Area}_{g_{t_j}}(D_j)},
\]
where $C_{\mathrm{FK}}:=(c')^{-2}\pi j_{0,1}^2>0$ and the last step uses $\mathrm{Area}_{g_{\mathrm{Euc}}}(D_j)\le c'\mathrm{Area}_{g_{t_j}}(D_j)$. Since $\mathrm{Area}_{g_{t_j}}(D_j)\le \mathrm{Area}_{g_{t_j}}(B_{g_0}(x_j,r_0))\lesssim r_0^2 = c_*^2\lambda_{k,0}^{-1}$ and $\inf\Pot_{t_j}$ is uniformly bounded below, choosing $c_*$ small enough forces $\lambda_{k,t_j}\ge(1+\delta)\lambda_{k,0}$ for some $\delta>0$, contradicting $\lambda_{k,t}\to\lambda_{k,0}$.
\end{proof}

\section{Nodal domain count under localised perturbation and applications}\label{sec:changing_topology}

We now deal with the case of localised perturbations in the sense of \cite{Kom2005,Takahashi2002,MukherjeeSaha2021} et al., where the topology of the perturbed surface is not necessarily the same as the limiting surface. More precisely, we consider the following setup.

Let $M_t$ be a one-parameter family of perturbations of the Riemannian surface $M_0$ where the spectrum of $M_0$ is simple. Let $t_i\to 0$ be a sequence such that for all $k\ge 1$ we have 
\begin{equation}\label{eq:eigenval_conv}
    \lim_{t_i \to 0} \lambda_k(M_{t_i}) = \lambda_k(M_0).
\end{equation}

\paragraph{Identification on the unperturbed region.}
We assume there exists a nonempty open set $U\subset M_0$ and, for all $|t|$ small, smooth embeddings
\[
\iota_t: U \hookrightarrow M_t,\qquad \iota_0=\mathrm{id}_U,
\]
such that $\iota_t\to \iota_0$ in $C^\infty$ on compact subsets of $U$. We henceforth identify $U$ with its image $\iota_t(U)\subset M_t$ and write $M:=U$. Heuristically, $M$ is the \emph{eventually unperturbed} part of $M_0$.

Let $\varphi_{k,0}$ be the $k$-th eigenfunction of $-\Delta$ on $M_0$ and $\varphi_{k,t}$ the corresponding $k$-th eigenfunction of the perturbation $M_t$.
We additionally assume $C^\infty$-convergence of eigenfunctions on the unperturbed region:
\begin{equation}\label{eq:eigenfunc_conv}
    \varphi_{k,t}\circ\iota_t \longrightarrow \varphi_{k,0} \quad\text{in } C^\infty_{\mathrm{loc}}(M).
\end{equation}
For ease of notation, we will drop $k$ from our notations and use $(\lambda_t, \varphi_t)$ as our eigenpair corresponding to $M_t$. 
By the convergence assumption \eqref{eq:eigenfunc_conv}, the restriction of $\NNN({\varphi_t})$ to $M$ lies in a $\delta$-neighbourhood of $\NNN(\varphi_0)\cap M$ for sufficiently small $t$. First, we record an analogue of Theorem \ref{thm:main_result_A} above.

\begin{theorem}\label{thm:perturb_nodal_domain_reduction}
Let $M_0$ be a closed surface and $M_t$ a localised perturbation as above. Assume that $M$ is a relatively compact subsurface of $M_0$ with smooth boundary, and that for $|t|$ small:
\begin{enumerate}
\item[(a)] $0$ is a regular value of $\varphi_{0}|_M$ and of $\varphi_{t}|_M$;
\item[(b)] there exists a fixed compact set $T\Subset\mathrm{int}(M)$ such that $\mathcal N(\varphi_0)\subset T$ and $\mathcal N(\varphi_t)\subset T$ for all small $|t|$;
\item[(c)] each connected component of $M_0\setminus M$ meets at most one boundary component of $\partial M$.
\end{enumerate}
Then, for $|t|$ small,
\beq\label{ineq:4.1_gen}
\nu(\varphi_t)\ \le\ \nu(\varphi_0).
\eeq
If, in addition, each connected component of $M_t\setminus M$ also meets at most one boundary component of $\partial M$ (no bridging on either side), then
\beq\label{ineq:4.1_no_nod_crit}
\nu(\varphi_t)\ =\ \nu(\varphi_0).
\eeq
\end{theorem}

\begin{proof}
Write $N_{\mathrm{in}}:=\#\pi_0(M\setminus\mathcal N(\varphi_0))$.

\smallskip\noindent\emph{Step 1: interior component count is preserved.}
By (a)-(b), the nodal sets $\mathcal N(\varphi_0)$ and $\mathcal N(\varphi_t)$ are both contained in the fixed compact set $T\Subset\mathrm{int}(M)$, and $0$ is a regular value of both $\varphi_0|_M$ and $\varphi_t|_M$. Choose a compact neighbourhood $K$ of $T$ with $T\Subset\mathrm{int}(K)\Subset K\Subset\mathrm{int}(M)$. Since $\varphi_t\to\varphi_0$ in $C^\infty_{\mathrm{loc}}(\mathrm{int}(M))$ (the standing convergence assumption), in particular $\varphi_t\to\varphi_0$ in $C^1(K)$. Because $0$ is a regular value of $\varphi_0$ and both nodal sets lie in $T\Subset\mathrm{int}(K)$, $C^1$-closeness on $K$ and the interpolation/isotopy argument of Proposition~\ref{prop:nodal_line_no_crossing} (applied on $K$, with both nodal sets compactly contained in $\mathrm{int}(K)$) give an ambient isotopy of $K$ carrying $\mathcal N(\varphi_0)$ to $\mathcal N(\varphi_t)$, supported in $\mathrm{int}(K)$. In particular, $\#\pi_0(K\setminus\mathcal N(\varphi_t))=\#\pi_0(K\setminus\mathcal N(\varphi_0))$, and since the isotopy is the identity near $\partial K$, the sign pattern of $\varphi_t$ on $\partial K$ agrees with that of $\varphi_0$.

Now consider the collar $C:=M\setminus\mathrm{int}(K)$. Both $\varphi_0$ and $\varphi_t$ are nonzero on $C$ (since $T\subset\mathrm{int}(K)$), so each has constant sign on each connected component of $C$. Every connected component of $C$ touches $\partial K$, and on $\partial K$ the isotopy is the identity, so $\varphi_t$ agrees in sign with $\varphi_0$ there (by $C^1$-convergence on $K$). Since the sign is constant on each connected component of $C$ and agrees with that of $\varphi_0$ on $\partial K$, the region $C$ induces the same connectivity relation among the boundary components of $K\setminus\NNN(\varphi_t)$ as among those of $K\setminus\NNN(\varphi_0)$. It follows that
\[
\#\pi_0(M\setminus\mathcal N(\varphi_t))\ =\ \#\pi_0(M\setminus\mathcal N(\varphi_0))\ =\ N_{\mathrm{in}}.
\]
(The buffer $K\Subset\mathrm{int}(M)$ rather than $K=M$ is used because the standing convergence $\varphi_t\to\varphi_0$ holds in $C^\infty_{\mathrm{loc}}(\mathrm{int}(M))$ but not up to $\partial M$; the collar $C$ provides a zero-free region on which sign comparisons do not require boundary regularity.)

\smallskip\noindent\emph{Step 2: the limiting side.}
Since $\mathcal N(\varphi_0)\Subset\mathrm{int}(M)$, there is a collar of $\partial M$ on which $\varphi_0$ has constant nonzero sign; hence each boundary component of $\partial M$ lies in the closure of exactly one component of $M\setminus\mathcal N(\varphi_0)$. By hypothesis~(c), each connected component of $M_0\setminus M$ meets at most one boundary component, so it attaches to exactly one sign region and causes no merging of interior components. Therefore
\[
\nu(\varphi_0)\ =\ N_{\mathrm{in}}.
\]

\smallskip\noindent\emph{Step 3: the perturbed side.}
Since $\mathcal N(\varphi_t)\subset T\Subset\mathrm{int}(M)$, the exterior $M_t\setminus M$ is zero-free. Every connected component of $M_t\setminus M$ is therefore sign-definite for $\varphi_t$, and gluing it to $M\setminus\mathcal N(\varphi_t)$ can only identify pre-existing sign regions of the same sign; it cannot create new nodal domains. Therefore
\[
\nu(\varphi_t)\ \le\ \#\pi_0(M\setminus\mathcal N(\varphi_t))\ =\ N_{\mathrm{in}}\ =\ \nu(\varphi_0).
\]

\smallskip\noindent\emph{Step 4: equality under no-bridging.}
If each component of $M_t\setminus M$ also meets at most one boundary component, then (since $\mathcal N(\varphi_t)\Subset\mathrm{int}(M)$, each boundary component carries a single sign of $\varphi_t$) the $t$-side exterior also attaches to exactly one sign region and causes no merging. Hence $\nu(\varphi_t)=N_{\mathrm{in}}=\nu(\varphi_0)$.
\end{proof}

\begin{remark}
Theorem~\ref{thm:perturb_nodal_domain_reduction} is deliberately stated in a regular-value form, in order to isolate the zero-free exterior gluing mechanism from the singular local analysis developed earlier in the paper. A more general \emph{localised upper semicontinuity} statement can be obtained when $\varphi_0$ has nodal critical points in $\mathrm{int}(M)$: remove small balls around these nodal critical points, apply the regular-value isotopy argument on the complement of those balls in $M$, use the local sector-counting result of Theorem~\ref{thm:loc_nod_est}(c) in each singular ball, run the dual-incidence-graph quotient argument inside the fixed core $M$ as in the proof of Theorem~\ref{thm:main_result_A}, and then glue the zero-free exterior $M_t\setminus M$ as in the proof above. This yields $\nu(\varphi_t)\le\nu(\varphi_0)$ without assuming that $0$ is a regular value on all of $M$, and in particular covers collapsing connected-sum degenerations $M_1\#_\varepsilon M_2$ (with $M_2$ collapsing) once the fixed core contains the nodal critical points of the limit.

The equality conclusion~\eqref{ineq:4.1_no_nod_crit} under no-bridging is, however, specific to the regular-value setting. Once nodal critical points are allowed in the core, the nodal-domain count may already drop inside the singular balls, so one should only expect the upper semicontinuity statement in that generality. For the Courant-sharp application in Theorem~\ref{thm:Freitas_app} below, the regular-value version stated above suffices: the Freitas model eigenfunctions on the inner disc have nodal sets consisting of disjoint embedded circles, so hypothesis~(a) holds automatically on the chosen core.
\end{remark}

\subsection{Application: nodal data prescriptions}\label{subsec:nod_da_pres}
We first construct closed surfaces which are Courant-sharp up to any finite level.

\begin{proof}[Proof of  Theorem \ref{thm:Freitas_app}]
    Consider a closed surface $M$ of genus $m$ with any Riemannian metric $g_0$. Let $B(p,r)$ be a geodesic disc inside $M$ centred at $p$ with radius $r$, where $r$ is smaller than the injectivity radius of $(M,g_0)$. Fix a diffeomorphism $\Psi:\mathbb D\to B(p,r)$. Now recall the following theorem from \cite{Freitas2002}.
    
    \begin{theorem}[Freitas]\label{thm:Freitas}
    Given any positive integer $k$ there exists a family $\mathcal{M}$ of rotationally invariant metrics on the unit disc with positive curvature and fixed area, for which the first $k$ eigenvalues of the Laplace operator with both Neumann and Dirichlet boundary conditions are simple. In both cases, the eigenfunction corresponding to the $l$-th eigenvalue $l=(2, \cdots, k)$ has $l-1$ nodal lines which are closed disjoint circles dividing the disc into $l$ nodal domains.
    \end{theorem}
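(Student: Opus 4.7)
The plan is to work inside the family of $\SSS^1$-invariant (warped-product) metrics on the disc $D$, written in geodesic polar coordinates as $g_f = dr^2 + f(r)^2\, d\theta^2$ on $(r, \theta) \in [0, R] \times \SSS^1$, where $f \in C^\infty([0, R])$ satisfies $f(0) = 0$, $f'(0) = 1$ (the smoothness condition at the origin) and $f > 0$ on $(0, R]$. Under this parametrisation the Gauss curvature is $K = -f''/f$, so positive curvature corresponds exactly to $f$ being strictly concave, and the area is $2\pi \int_0^R f(r)\, dr$, which can be normalised to any prescribed value by a global conformal rescaling $g_f \mapsto c^2 g_f$ (equivalently by reparametrising $(r, f) \mapsto (cr, cf)$).

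Because the $\SSS^1$-action commutes with $\Delta_{g_f}$, separation of variables $\varphi(r,\theta) = u_m(r)e^{im\theta}$ reduces the eigenvalue problem on $D$ to a family of Sturm-Liouville problems
\beq\label{eq:freitas_sl}
-u_m'' - \frac{f'(r)}{f(r)}\, u_m' + \frac{m^2}{f(r)^2}\, u_m = \lambda u_m, \quad r \in (0, R),
\eeq
with regularity at $r = 0$ and Dirichlet (or Neumann) condition at $r = R$. For $m = 0$ the radial eigenvalues $\mu_1(f) < \mu_2(f) < \cdots$ are simple, and the classical Sturm oscillation theorem implies that the $l$-th radial eigenfunction has exactly $l - 1$ simple interior zeros. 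Each such zero lifts to a geodesic circle on $D$, so the nodal set of the $l$-th radial eigenfunction is a disjoint union of $l - 1$ concentric circles dividing $D$ into $l$ annular nodal domains (with a central disc). Hence the nodal structure demanded by the theorem is automatic, provided we arrange that the first $k$ Laplace eigenvalues of $(D, g_f)$ all come from the radial sector.

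The main obstacle is to select $f$ so that every non-radial eigenvalue strictly exceeds $\mu_k(f)$. Testing the Rayleigh quotient of \eqref{eq:freitas_sl} in the $m$-th angular sector ($m \geq 1$) against any admissible $u_m$ yields the uniform lower bound $\lambda_1^{(m)}(f) \geq m^2 / \max_{[0,R]} f^2$. On the other hand, choosing a strictly concave profile that is approximately $1$ on the bulk of a long interval --- for instance a smooth, strictly concave profile modelled on $f(r) = \tanh(r)$, which indeed satisfies $f(0)=0$, $f'(0)=1$ and $f'' = -2\,\mathrm{sech}^2(r)\tanh(r) < 0$ --- on $[0, R]$ makes the coefficient $f'/f$ decay exponentially, so that the radial eigenvalues behave like $\mu_l(f) \sim (\pi l/R)^2$. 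After rescaling to the prescribed area, all eigenvalues are multiplied by the same factor and the dimensionless gap between the first non-radial eigenvalue and $\mu_k(f)$ scales like $R^2/k^2$; choosing $R = R(k)$ sufficiently large therefore forces $\mu_1(f), \dots, \mu_k(f)$ to be the first $k$ Laplace eigenvalues of $(D, g_f)$. The real technical difficulty is reconciling the three simultaneous constraints --- strict concavity, the origin normalisation, and the fixed-area condition --- which is resolved by trading ``longness'' of the $r$-interval for ``thinness'' of the profile via the conformal rescaling of the first paragraph.

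Finally, simplicity of the first $k$ Laplace eigenvalues of $(D, g_f)$ follows at once: the $\mu_l(f)$ are already simple as $1$-dimensional Sturm-Liouville eigenvalues, and by the previous step they are strictly separated from every non-radial eigenvalue. The family $\mathcal{M}$ is then obtained as an open $C^\infty$-neighbourhood of this distinguished $f$ in the space of strictly concave profiles with the prescribed normalisations; continuity of eigenvalues under such perturbations preserves both the simplicity and the radial character of the first $k$ eigenfunctions, and hence their nodal structure. The Dirichlet and Neumann statements are handled by the same argument, the only change being the boundary condition imposed at $r = R$ in \eqref{eq:freitas_sl}.
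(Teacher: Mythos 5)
This statement is a quoted result: the paper attributes it to \cite{Freitas2002} and does not supply a proof of its own, so there is no in-paper argument against which to compare your proposal. Your sketch is nonetheless a faithful reconstruction of the strategy behind Freitas' original theorem --- work in the class of $S^1$-invariant warped metrics $dr^2 + f(r)^2\,d\theta^2$ on a disc, separate variables, invoke Sturm oscillation theory for the $m=0$ sector to produce $l-1$ simple radial zeros, use the Rayleigh-quotient lower bound $\lambda_1^{(m)} \geq m^2/\max f^2$ to push the non-radial spectrum above $\mu_k$, and rescale to normalise the area. A couple of steps are asserted rather than shown and would need to be carried out for a self-contained proof: the asymptotic $\mu_l(f)\sim(\pi l/R)^2$ uniformly for $l\leq k$ as $R\to\infty$, for a concave profile such as $\tanh(r)$, requires a min-max or comparison argument that accounts for both the $1/r$ singularity of the drift $f'/f$ near the origin and the exponentially small drift in the bulk; and the claim that a conformal rescale can be used to fix the area without disturbing the other constraints is correct --- since $g\mapsto c^2 g$ corresponds to $f\mapsto c\,f(\cdot/c)$, which preserves $f(0)=0$, $f'(0)=1$, and strict concavity while multiplying every eigenvalue by $c^{-2}$, so the radial/non-radial spectral gap ratio is scale invariant --- but this deserves to be written out rather than left as a parenthetical remark. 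Subject to filling in these details, your approach is sound and matches the cited source.
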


    Let $g_D$ be one of the metrics guaranteed by Freitas' result, and transplant it to the geodesic disc via $\Psi_*(r^2g_D)$. Let $\varphi_{l,0}$ be the $l$-th Neumann eigenfunction of $-\Delta_{\Psi_*(r^2 g_D)}$ on $B(p,r)$, with eigenvalue $\lambda_{l,0}$. By the Freitas theorem, the first $k$ eigenvalues are simple. Since the Freitas eigenfunctions are $S^1$-invariant with nodal set consisting of $l-1$ disjoint circumferences, $0$ is a regular value on the disc model; after pushforward by $\Psi$, the nodal set of $\varphi_{l,0}$ consists of $l-1$ pairwise disjoint embedded circles in $B(p,r)$. Choose a fixed $\delta_*>0$ smaller than the minimum distance from any of these circles to $\partial B(p,r)$, so that $\mathcal N(\varphi_{l,0})\Subset B(p,r-\delta_*)$ for all $l\le k$.
    
    Define the piecewise metric
    \[
    \tilde g_\varepsilon\ =\ \begin{cases}
    \Psi_*(r^2g_D) & \text{on } B(p,r),\\
    \varepsilon^2g_0 & \text{on } M\setminus B(p,r).
    \end{cases}
    \]
    By \cite[Section~2, Steps~2-3]{EPS2015}, the first $k$ eigenvalues of the quadratic form associated with $\tilde g_\varepsilon$ converge to $\lambda_{l,0}$ as $\varepsilon\to 0$, and the corresponding eigenfunctions converge in $C^\infty$ on compact subsets of $B(p,r)$ to $\varphi_{l,0}$. By \cite[Section~2, Step~4]{EPS2015}, for each fixed small $\varepsilon$ there exist smooth metrics $g_\varepsilon$ approximating $\tilde g_\varepsilon$ and agreeing with $\Psi_*(r^2g_D)$ on $B(p,r-\delta_*)$, such that the first $k$ eigenpairs of $-\Delta_{g_\varepsilon}$ remain arbitrarily close to those of $\tilde g_\varepsilon$. Since the limiting Neumann eigenvalues $\lambda_{1,0}<\cdots<\lambda_{k,0}$ are pairwise distinct (by Freitas' theorem), the minimum gap $\min_{i\neq j}|\lambda_{i,0}-\lambda_{j,0}|>0$; choosing $\varepsilon$ small and then the smoothing close enough, we may arrange simultaneously for all $l\le k$ that $\lambda_{l,\varepsilon}$ is simple and
    \[
    \varphi_{l,\varepsilon}|_{B(p,r-\delta_*)}\to \varphi_{l,0}|_{B(p,r-\delta_*)} \quad\text{in }C^\infty.
    \]
    
    We now count nodal domains. For each $l\le k$, the limiting eigenfunction $\varphi_{l,0}$ has no nodal critical points, and $0$ is a regular value of both $\varphi_{l,0}$ and $\varphi_{l,\varepsilon}$ on $B(p,r-\delta_*)$ (the latter by $C^\infty$-convergence, for $\varepsilon$ small). Since $\mathcal N(\varphi_{l,0})\Subset B(p,r-\delta_*)$, the regular-value stability argument of Proposition~\ref{prop:nodal_line_no_crossing} (applied on the fixed inner disc $B(p,r-\delta_*)$) gives that $\mathcal N(\varphi_{l,\varepsilon})\cap B(p,r-\delta_*)$ is ambiently isotopic to $\mathcal N(\varphi_{l,0})\cap B(p,r-\delta_*)$ for $\varepsilon$ small. In particular, the $l-1$ nested circles of $\varphi_{l,0}$ persist as $l-1$ disjoint embedded loops of $\varphi_{l,\varepsilon}$ inside $B(p,r-\delta_*)$. Since these loops lie inside the topological disc $B(p,r-\delta_*)$ and are isotopic to the original nested circles there, $\varphi_{l,\varepsilon}$ changes sign across each loop, and the disc already contains $l$ alternating sign regions. Each such region is contained in a distinct nodal domain of $\varphi_{l,\varepsilon}$ on $M$, giving $\nu(\varphi_{l,\varepsilon})\ge l$. By Courant's theorem, $\nu(\varphi_{l,\varepsilon})\le l$, so
    \[
    \nu(\varphi_{l,\varepsilon})\ =\ l.
    \]
    Setting $g:=g_\varepsilon$ completes the proof.
\end{proof}

\begin{remark}
The proof above invokes Proposition~\ref{prop:nodal_line_no_crossing} directly on the inner disc $B(p,r-\delta_*)$, rather than routing through Theorem~\ref{thm:perturb_nodal_domain_reduction}. Another approach would be to generalise Theorem~\ref{thm:perturb_nodal_domain_reduction} to a compact $M_0$ with smooth (possibly nonempty) boundary equipped with Neumann conditions and then verify the three hypotheses (a)-(c). We do not carry this out here.
\end{remark}

As our next application, we address the problem of prescribing nodal intersections on the boundary of surfaces. Before dealing with the more general Theorem \ref{thm: prescription_nodal_intersection_multiple_bdry}, we look at a simpler case of surfaces with one boundary component. The following result allows us to prescribe the number of nodal intersections at the boundary of a compact ``flask''.

\begin{theorem}\label{thm:prescription_nodal_intersection_boundary}
    Let $M$ be a compact Riemannian surface whose boundary $\pa M$ has exactly one component. Given $n \in \mathbb{N}$, there exists a metric $g$ on $M$ such that for some $k\in \NN$ the nodal set of the corresponding Neumann eigenfunction $\varphi_k$ intersects the boundary $\pa M$ exactly $2n$ times.  
\end{theorem}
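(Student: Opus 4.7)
The plan is to adapt the gluing construction from the proof of Theorem~\ref{thm:Freitas_app}, replacing the Freitas disc model by a flat cylinder model whose explicit Neumann eigenfunctions have prescribed boundary nodal behaviour.

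\textbf{Model cylinder.} Consider the flat cylinder $V = S^1_\ell \times [0, L]$ with product metric, where $S^1_\ell$ is a circle of length $\ell$. By separation of variables, the Neumann eigenvalues of $V$ are $(2\pi k/\ell)^2 + (m\pi/L)^2$, with associated eigenfunctions spanned by $\cos(2\pi k\theta/\ell)\cos(m\pi x/L)$ and $\sin(2\pi k\theta/\ell)\cos(m\pi x/L)$, $k,m \geq 0$. I would choose $L$ so small (relative to $\ell$) that $L < \ell/(2n)$; then the first $2n+1$ Neumann eigenvalues are precisely those with $m=0$, namely $(2\pi k/\ell)^2$ for $k = 0, 1, \dots, n$ (multiplicity $1$ for $k=0$, and multiplicity $2$ otherwise). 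Every nonzero eigenfunction in the two-dimensional eigenspace at level $k=n$ is of the form $\alpha \cos(2\pi n\theta/\ell) + \beta \sin(2\pi n\theta/\ell)$, independent of $x$, and restricts to $S^1_\ell \times \{0\}$ as a sinusoidal function with exactly $2n$ simple zeros.

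\textbf{Gluing and degeneration.} Let $U \subset M$ be a closed collar neighbourhood of $\partial M$, chosen diffeomorphic to $V$ via an identification carrying $S^1_\ell \times \{0\}$ onto $\partial M$. Mimicking~\eqref{eq:modified_Freitas}, define the piecewise-smooth metric
\[
\tilde{g}_\epsilon = \begin{cases} g_V & \text{on } U, \\ \epsilon^2 g_0 & \text{on } M \setminus U, \end{cases}
\]
where $g_V$ is the flat cylinder metric and $g_0$ is any fixed background metric on $M$. By the $C^0$-continuity of the Laplace spectrum (as invoked in the proof of Theorem~\ref{thm:Freitas_app}), one obtains a smooth metric $g_\epsilon$ whose spectrum is arbitrarily close to that of $\tilde{g}_\epsilon$ and which agrees with $g_V$ on a slightly smaller sub-collar of $U$. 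Adapting Steps~2--3 of \cite{EPS2015} as in the proof of Theorem~\ref{thm:Freitas_app}, for each fixed $l \leq 2n+1$ one obtains
\[
\lim_{\epsilon \to 0}\lambda_{l,\epsilon}(M, g_\epsilon) = \lambda_l(V), \qquad \varphi_{l,\epsilon}\big|_U \xrightarrow{C^\infty(U)} \Phi_l,
\]
where $\Phi_l$ is a Neumann eigenfunction of $V$ at the corresponding level. Since the $L^2$ mass of $\varphi_{l,\epsilon}$ concentrates in $U$ as the complementary volume vanishes like $\epsilon^2$, we have $\Phi_l \not\equiv 0$.

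\textbf{Persistence of boundary zeros.} For $l = 2n$ (or $l = 2n+1$), the limit $\Phi_l$ belongs to the two-dimensional eigenspace spanned by $\cos(2\pi n\theta/\ell), \sin(2\pi n\theta/\ell)$, and therefore restricts to $\partial M$ as a nonzero sinusoidal function with exactly $2n$ simple zeros. The $C^1$-convergence of $\varphi_{l,\epsilon}|_{\partial M}$ to this restriction, coupled with a standard Rouch\'e-type / implicit function argument for simple zeros of circle-valued functions, ensures that $\varphi_{l,\epsilon}|_{\partial M}$ also has exactly $2n$ simple zeros for every sufficiently small $\epsilon$. Setting $k = l$ and $g = g_\epsilon$ completes the proof.

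\textbf{Main obstacle.} The principal technical step is the spectral convergence in the $\epsilon \to 0$ degeneration, i.e.\ adapting the EPS2015 framework to the situation where the preserved region $U$ shares a boundary component with $\partial M$ (rather than sitting entirely in the interior as in Theorem~\ref{thm:Freitas_app}). This adaptation is essentially routine, as the Neumann condition on $\partial M$ is compatible with the limit and the complementary region $M \setminus U$ collapses exactly as in the original Freitas-type argument. A secondary subtlety is the degeneracy of the Neumann spectrum of the flat cylinder at level $k=n$; this is handled either by a small symmetry-breaking perturbation of $g_V$ inside $V$, or, as above, by the observation that every nonzero element of the limiting eigenspace carries exactly $2n$ simple boundary zeros.
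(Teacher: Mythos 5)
Your proof is correct, and it takes a genuinely different route from the paper's. The paper starts from the Euclidean unit disc with the Bessel mode $J_n(\sqrt\lambda r)\cos n\theta$, first perturbs the disc metric to achieve simple spectrum and to kill the interior nodal critical point (citing \cite{Uhlenbeck1976}), then has to run a delicate argument (Lemma~\ref{lem:nod_lin_disc_invariant} applied on a slightly smaller circle, plus Faber--Krahn and inner-radius considerations, plus a case analysis excluding merging/splitting near $\pa\DD$) to show the perturbed nodal set still hits $\pa\DD$ at $2n$ points, and finally attaches the perturbed disc to $\pa M$ by removing a small interior ball from the disc and gluing along a shrinking circle. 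You instead attach a thin flat cylinder $S^1_\ell\times[0,L]$ with $L<\ell/(2n)$ as a \emph{fixed} collar of $\pa M$, shrink the complement, and use that the relevant two-dimensional eigenspace of the model consists entirely of $x$-independent trigonometric modes whose restriction to either boundary circle is a nonzero sinusoid with exactly $2n$ simple zeros. This makes the boundary intersection count manifest, makes the persistence argument a two-line Rouch\'e/implicit-function estimate, and handles the eigenvalue degeneracy without requiring simple spectrum (since any subsequential limit of $\varphi_{l,\epsilon}|_U$ lies in the eigenspace and still has $2n$ simple boundary zeros), whereas the paper explicitly perturbs to achieve simplicity on the grounds that its limiting argument requires it. What the paper's route buys additionally is explicit control over the full nodal structure of the model eigenfunction (no interior nodal critical points) --- but this is not needed for the statement, which only concerns boundary intersections. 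One point you correctly identify as the technical core is adapting the EPS2015 spectral degeneration to the case where the preserved region touches $\pa M$ and the interface circle is fixed rather than shrinking; as you note, this forces Neumann conditions on both ends of the cylinder in the limit and is routine, but it is where a complete write-up would need to supply details.
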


\begin{proof}
We first construct a topological disc carrying a \emph{simple} Neumann eigenfunction with exactly $2n$ transverse boundary hits, and then glue that disc to $M$.

\smallskip\noindent\emph{Step 1: a simple disc eigenfunction with exactly $2n$ boundary hits.}
We construct a smooth metric $h$ on $\mathbb D$ and a simple Neumann eigenfunction $u$ of $(\mathbb D,h)$ whose nodal set meets $\partial\mathbb D$ transversely in exactly $2n$ points.

Consider conformal metrics $g_f:=e^{2f(r)}g_{\mathrm{Euc}}$ on $\mathbb D$, where $f:[0,1]\to\mathbb R$ is a smooth radial profile with $f(1)=0$ and all odd-order derivatives vanishing at $0$ (so that $r\mapsto f(r)$ extends to a smooth even function and $g_f$ is a smooth Riemannian metric on $\mathbb D$). Every such metric preserves the $S^1$-symmetry, so the Neumann problem decomposes into angular modes: for each $m\ge 0$, the $m$-th mode yields a Sturm--Liouville problem on $[0,1]$ with Neumann boundary conditions, whose eigenvalues $\lambda_{m,1}(f)<\lambda_{m,2}(f)<\cdots$ are simple (within the $m$-th mode) and depend analytically on $f$. For $m\ge 1$, each $\lambda_{m,k}(f)$ has multiplicity $2$ in the full Neumann spectrum (from $\cos(m\theta)$ and $\sin(m\theta)$), unless it accidentally coincides with an eigenvalue from a different angular mode.

The first eigenfunction of the $n$-th radial mode has the form $v_f(r)\cos(n\theta)$, where $v_f$ is the first eigenfunction of the corresponding Sturm--Liouville problem. Since $v_f$ satisfies a second-order ODE with Neumann condition $v_f'(1)=0$, and $v_f\not\equiv 0$, ODE uniqueness gives $v_f(1)\neq 0$. Hence the boundary trace $v_f(1)\cos(n\theta)$ has exactly $2n$ simple zeros on $\partial\mathbb D$.

For the Euclidean metric ($f=0$), the eigenvalue $\lambda_{n,1}(0)=(j'_{n,1})^2$ may accidentally coincide with eigenvalues from other angular modes. We eliminate such coincidences by a transversality argument: for each fixed pair $(m,k)$ with $m\neq n$, the condition $\lambda_{n,1}(f)=\lambda_{m,k}(f)$ defines a closed subset of the space of smooth radial profiles. This subset has empty interior, because the first-order variation of $\lambda_{m,k}$ under $f\mapsto f+\varepsilon\phi$ is a weighted $L^2$-pairing of $\phi$ with $|R_{m,k}|^2$ (the squared radial eigenfunction for the $(m,k)$-mode), and $|R_{n,1}|^2$ and $|R_{m,k}|^2$ are linearly independent when $m\neq n$: near $r=0$ the regular $m$-th angular mode satisfies $R_{m,k}(r)=c_{m,k}r^m+O(r^{m+2})$ with $c_{m,k}\neq 0$, so $|R_{m,k}|^2\sim |c_{m,k}|^2r^{2m}$ and $|R_{n,1}|^2\sim |c_{n,1}|^2r^{2n}$; since $m\neq n$ these have different vanishing orders at $0$ and cannot be proportional. Hence one can always choose $\phi$ so that $\lambda_{n,1}$ and $\lambda_{m,k}$ move at different rates. Since there are only countably many pairs $(m,k)$ with $m\neq n$, the Baire category theorem gives a residual (hence dense) set of radial profiles $f$ for which $\lambda_{n,1}(f)$ does not coincide with any eigenvalue from another angular mode. Choose such an $f_0$ close to $0$; then $\lambda_{n,1}(f_0)$ has multiplicity exactly $2$ in the Neumann spectrum of $(\mathbb D,g_{f_0})$, with eigenspace $E_{f_0}=\mathrm{span}\{v_{f_0}(r)\cos(n\theta),v_{f_0}(r)\sin(n\theta)\}$.

By generic simplicity of the Neumann spectrum (\cite{Uhlenbeck1976}), there exists a sequence of smooth metrics $h_j\to g_{f_0}$ in $C^\infty(\overline{\mathbb D})$ such that each $(\mathbb D,h_j)$ has simple Neumann spectrum. For $j$ large, the Riesz spectral projection of $-\Delta_{h_j}$ onto the eigenvalues near $\lambda_{n,1}(f_0)$ has rank $2$ and its range converges to $E_{f_0}$. By simplicity of the spectrum of $h_j$, these are exactly two simple eigenvalues $\lambda_j^-<\lambda_j^+\to\lambda_{n,1}(f_0)$. Choose $u_j$ to be a normalised eigenfunction for either $\lambda_j^-$ or $\lambda_j^+$.

We claim that, for all sufficiently large $j$, the boundary trace $u_j|_{\partial\mathbb D}$ has exactly $2n$ simple zeros. By elliptic compactness, after passing to a subsequence, $u_j\to v$ in $C^\infty(\overline{\mathbb D})$, where $v$ is a nonzero element of $E_{f_0}$ (since $u_j$ lies in the range of the rank-$2$ Riesz projection converging to $E_{f_0}$). Every nonzero $v\in E_{f_0}$ has boundary trace of the form $v|_{\partial\mathbb D}=c\cos(n(\theta-\theta_0))$ for some $c\neq 0$ (since $v_{f_0}(1)\neq 0$), hence has exactly $2n$ simple zeros. By $C^1$-convergence on $\partial\mathbb D$, the same holds for $u_j|_{\partial\mathbb D}$ for all large $j$, proving the claim.

Fix such a large $j_0$, let $D:=(\mathbb D,h_{j_0})$, and let $u:=u_{j_0}$. Then $u$ is a simple Neumann eigenfunction on $D$, and the boundary trace $u|_{\partial D}$ has exactly $2n$ simple zeros, so $\mathcal N(u)$ meets $\partial D$ transversely at exactly $2n$ points.

\smallskip\noindent\emph{Step 2: gluing $D$ to $M$.}
Let $(M,g_M)$ be a compact Riemannian surface with one boundary component $\partial M$. Choose a point $x_1\in D\setminus\mathcal N(u)$ and $\epsilon>0$ so small that $B(x_1,\epsilon)\cap\mathcal N(u)=\emptyset$. Form the glued surface
\[
M_\epsilon\ :=\ \bigl(D\setminus B(x_1,\epsilon)\bigr)\cup_{\phi_\epsilon} M,
\]
where $\phi_\epsilon:\partial B(x_1,\epsilon)\to\partial M$ is a smooth attaching map. By the collar neighbourhood theorem, $M_\epsilon$ is diffeomorphic to $M$; fix a diffeomorphism $F_\epsilon:M\to M_\epsilon$ carrying $\partial M$ to $\partial D$. Equip $M_\epsilon$ with a smooth metric $\tilde g_\epsilon$ equal to $h_{j_0}$ on $D\setminus B(x_1,2\epsilon)$ and $\epsilon^2 g_M$ on $M$, smoothly interpolated in a collar. The unique boundary component is the untouched outer boundary $\partial D$. Pull back: $g_\epsilon:=F_\epsilon^*\tilde g_\epsilon$.

\smallskip\noindent\emph{Step 3: spectral degeneration and persistence of the boundary zeros.}
By \cite[Section~2, Steps~2-4]{EPS2015}, as $\epsilon\to 0$ the Neumann spectrum of $(M_\epsilon,\tilde g_\epsilon)$ converges to the Neumann spectrum of $(D,h_{j_0})$, and for the simple eigenvalue of $u$ there exists an index $k\in\mathbb N$ such that, after choosing signs, the $k$-th Neumann eigenfunction $\varphi_{k,\epsilon}$ satisfies $\varphi_{k,\epsilon}\to u$ in $C^\infty$ on compact subsets of $D\setminus\{x_1\}$. In particular, the convergence is $C^\infty$ in a fixed collar of the boundary $\partial D$.

Let $b:=u|_{\partial D}$. Since $b$ has exactly $2n$ simple zeros on the compact curve $\partial D$, there exist pairwise disjoint arcs $I_1,\dots,I_{2n}\subset\partial D$ around those zeros and a constant $c>0$ such that
\[
|b'|\ge c \quad\text{on each } I_j,
\qquad
|b|\ge c \quad\text{on } \partial D\setminus \bigcup_{j=1}^{2n} I_j.
\]
Because $\varphi_{k,\epsilon}\to u$ in $C^\infty$ near $\partial D$, the boundary traces $\varphi_{k,\epsilon}|_{\partial D}$ converge to $b$ in $C^1(\partial D)$. Therefore, for all sufficiently small $\epsilon$, each arc $I_j$ contains exactly one zero of $\varphi_{k,\epsilon}|_{\partial D}$ and there are no other zeros on $\partial D$. At each such zero, $\partial_\tau(\varphi_{k,\epsilon}|_{\partial D})\neq 0$ (by $C^1$-closeness to $b$), and the Neumann condition $\partial_\nu\varphi_{k,\epsilon}=0$ forces the interior nodal set to cross $\partial D$ transversely. Hence $|\mathcal N(\varphi_{k,\epsilon})\cap\partial M|=2n$. Setting $g:=g_\epsilon$ completes the proof.
\end{proof}

As our final result, we now generalise the above theorem to surfaces with multiple boundary components. 

\begin{proof}[Proof of Theorem \ref{thm: prescription_nodal_intersection_multiple_bdry}]
\emph{Step 1: choosing the disc models.}
For each $i\in\{1,\dots,b\}$, apply Step~1 of the proof of Theorem~\ref{thm:prescription_nodal_intersection_boundary}. This gives a topological disc $(D_i,h_i)$ with a \emph{simple} Neumann eigenpair $-\Delta_{h_i}u_i=\alpha_i^{(0)}u_i$ such that the boundary trace $u_i|_{\partial D_i}$ has exactly $2n_i$ simple zeros. Let
\[
0=\lambda^{(i)}_0<\lambda^{(i)}_1<\lambda^{(i)}_2<\cdots
\]
be the simple Neumann spectrum of $(D_i,h_i)$, and let $m_i\ge 1$ be the index for which $\lambda^{(i)}_{m_i}=\alpha_i^{(0)}$. Define the spectral-gap ratios
\[
\rho_i^-:=\frac{\lambda^{(i)}_{m_i-1}}{\lambda^{(i)}_{m_i}}<1,
\qquad
\rho_i^+:=\frac{\lambda^{(i)}_{m_i+1}}{\lambda^{(i)}_{m_i}}>1.
\]

\smallskip\noindent\emph{Step 2: arranging a contiguous isolated block of limit eigenvalues.}
Choose $r>1$ such that $\rho_i^-r<1<r<\rho_i^+$ for every $i=1,\dots,b$. This is possible because each $\rho_i^-<1<\rho_i^+$. Next choose $A>0$ and pairwise distinct numbers $A<\beta_1<\beta_2<\cdots<\beta_b<rA$. Rescale the metric $h_i$ by a constant factor $c_i^2$ so that the distinguished eigenvalue becomes $\beta_i$, i.e.\ $\alpha_i^{(0)}/c_i^2=\beta_i$. (Under such a rescaling all Neumann eigenvalues are multiplied by $c_i^{-2}$, and the boundary-hit count of $u_i$ is unchanged.) After this rescaling (still denoted $h_i$), the distinguished eigenvalue on $D_i$ is $\beta_i$, while every lower Neumann eigenvalue of $D_i$ satisfies
\[
\frac{\lambda^{(i)}_{m_i-1}}{c_i^2}=\rho_i^-\beta_i\le\rho_i^-rA<A,
\]
and every higher Neumann eigenvalue satisfies $\lambda^{(i)}_{m_i+1}/c_i^2=\rho_i^+\beta_i>\rho_i^+A>rA$.

Therefore the ordered union of the Neumann spectra of the discs has no eigenvalue in $[A,rA]$ except $\beta_1,\dots,\beta_b$. Denoting this ordered union by $(\mu_k)$, there exists $l\in\mathbb N$ such that $\mu_{l+i}=\beta_i$ for $i=1,\dots,b$.

\smallskip\noindent\emph{Step 3: gluing the discs to the given surface.}
Let $(M,g_M)$ be the given Riemannian surface with boundary components $B_1,\dots,B_b$. For each $i$, choose $x_i\in D_i\setminus\mathcal N(u_i)$ with $B(x_i,\epsilon)\cap\mathcal N(u_i)=\emptyset$. Attach $D_i\setminus B(x_i,\epsilon)$ to $B_i$ by a smooth gluing map, producing a surface $M_\epsilon$. By the collar neighbourhood theorem, $M_\epsilon$ is diffeomorphic to $M$; fix a diffeomorphism $F_\epsilon:M\to M_\epsilon$ carrying each boundary component $B_i$ of $M$ to the outer boundary $\partial D_i$. Define a smooth metric on $M_\epsilon$ by
\[
\tilde g_\epsilon\ =\ \begin{cases}
h_i & \text{on } D_i\setminus B(x_i,2\epsilon),\quad i=1,\dots,b,\\
\epsilon^2g_M & \text{on the core } M,
\end{cases}
\]
with smooth interpolation in the gluing collars. Pull back to $M$: $g_\epsilon:=F_\epsilon^*\tilde g_\epsilon$.

\smallskip\noindent\emph{Step 4: spectral degeneration and localisation.}
We use the following consequence of \cite[Section~2, Steps~2-4]{EPS2015}: as $\epsilon\to 0$, the Neumann eigenvalues of $(M_\epsilon,\tilde g_\epsilon)$ converge to the ordered union $(\mu_k)$ of the Neumann spectra of the disconnected limit $\bigsqcup_i(D_i,h_i)$; moreover, if a limit eigenvalue $\mu_k$ is simple and isolated, then after choosing signs the corresponding normalised eigenfunction converges in $C^\infty$ on compact subsets of the relevant disc $D_i$ away from the gluing point to the model eigenfunction on $D_i$. Since the block $\mu_{l+1}=\beta_1<\cdots<\mu_{l+b}=\beta_b$ is isolated from the rest of the limit spectrum, for all sufficiently small $\epsilon$ the global eigenvalues $\lambda_{l+1,\epsilon}<\cdots<\lambda_{l+b,\epsilon}$ are simple with $\lambda_{l+i,\epsilon}\to\beta_i$. After choosing signs, the $(l+i)$-th normalised Neumann eigenfunction $\psi_{i,\epsilon}$ on $(M_\epsilon,\tilde g_\epsilon)$ satisfies $\psi_{i,\epsilon}\to u_i$ in $C^\infty$ on compact subsets of $D_i\setminus\{x_i\}$, and in particular in a fixed collar of $B_i=\partial D_i$.

\smallskip\noindent\emph{Step 5: persistence of the boundary zeros.}
Fix $i$. The boundary trace $u_i|_{B_i}$ has exactly $2n_i$ simple zeros. Hence there exist pairwise disjoint arcs $I_{i,1},\dots,I_{i,2n_i}\subset B_i$ around these zeros and a constant $c_i>0$ such that
\[
|\partial_\tau(u_i|_{B_i})|\ge c_i\quad\text{on each }I_{i,j},
\qquad
|u_i|\ge c_i\quad\text{on }B_i\setminus\bigcup_{j=1}^{2n_i}I_{i,j}.
\]
Because $\psi_{i,\epsilon}\to u_i$ in $C^\infty$ near $B_i$, the boundary traces $\psi_{i,\epsilon}|_{B_i}$ converge to $u_i|_{B_i}$ in $C^1(B_i)$. Therefore, for all sufficiently small $\epsilon$, each arc $I_{i,j}$ contains exactly one zero of $\psi_{i,\epsilon}|_{B_i}$ and there are no other zeros on $B_i$. At each such zero, $\partial_\tau(\psi_{i,\epsilon}|_{B_i})\neq 0$, and the Neumann condition forces the interior nodal set to cross $B_i$ transversely. Hence $|\mathcal N(\psi_{i,\epsilon})\cap B_i|=2n_i$ for $i=1,\dots,b$.
\end{proof}

\begin{remark}[Payne-type stability]
The openness of the Payne property (nodal arc of the second Dirichlet eigenfunction hitting the boundary transversely, with no interior closed loop) under $C^2$-perturbation of strictly convex domains was established in \cite{MukherjeeSaha2022}; see also \cite[Section~3]{MukherjeeSaha2021} for further discussion and extensions to non-convex domains.
\end{remark}

\section{Higher-dimensional reduction and conjectural extension}\label{sec:higher_dim}

Throughout this section, $\varphi_t$ denotes the eigenfunction family in the standing Schr\"odinger framework of the paper (an eigenfunction of $-\Delta_{g_t}+\Pot_t$ on $(M^n,g_t)$), with $\varphi_t\to\varphi_0$ in $C^\infty(M)$. We will use the standard consequences of this framework: $\NNN(\varphi_t)$ is $(n-1)$-rectifiable, $\mathcal S(\varphi_t)$ has codimension at least $2$, $\mathcal C(\varphi_t)$ has empty interior (\cite{HardtSimon1989}), and the Bers--Cheng asymptotic expansion at nodal critical points produces tame finite chamber decompositions (see the notation paragraph below) on small spheres around points of $\mathcal S(\varphi_t)$.

The proof of Theorem~\ref{thm:main_result_A} uses three genuinely planar inputs:
the boundary intersection count on circles, the forest/Euler identity in a disc,
and the resulting sector-partition argument. In dimensions $n\ge 3$, none of these
have literal analogues. The singular set
\[
\mathcal S(\varphi):=\{x\in M:\varphi(x)=0,\ \nabla\varphi(x)=0\}
\]
may have dimension as large as $n-2$, while the link
\[
\NNN(\varphi)\cap \partial B(p,r)
\]
is the nodal set of the boundary trace $\varphi|_{\partial B(p,r)}$ on $S^{n-1}$, in general disconnected and with singular subset of codimension at least $2$, rather than the finite planar graph $\NNN(\varphi)\cap B(p,r)$ used in dimension $2$.

What nevertheless survives in all dimensions is the \emph{global} quotient mechanism.
Once one knows that, near each singular point, the chamber decomposition on a small
boundary sphere can only \emph{coarsen} under perturbation, the same incidence-graph
argument gives upper semicontinuity of the nodal domain count. We make this precise below.

\medskip

We first record the regular-value case, which is completely dimension-free.

\begin{proposition}[Regular-value stability in all dimensions]\label{prop:regular_value_all_dim}
Let $(M^n,g_t)$ be a $C^\infty$ family of closed Riemannian manifolds, and let
$\varphi_t\to \varphi_0$ in $C^\infty(M)$. If $0$ is a regular value of $\varphi_0$,
then for all sufficiently small $|t|$, $0$ is a regular value of $\varphi_t$,
the nodal hypersurface $\NNN(\varphi_t)$ is ambiently isotopic to $\NNN(\varphi_0)$,
and
\[
\nu(\varphi_t)=\nu(\varphi_0).
\]
\end{proposition}

\noindent This is the dimension-free analogue of Proposition~\ref{prop:nodal_line_no_crossing}; in dimension $2$ the two coincide.

\begin{proof}
Choose an open neighbourhood $U$ of $\NNN(\varphi_0)$ and a constant $c>0$ such that
\[
|\nabla \varphi_0|\ge 2c \quad \text{on } U,
\qquad
|\varphi_0|\ge 2c \quad \text{on } M\setminus U.
\]
This is possible because $0$ is a regular value of $\varphi_0$ and $M$ is compact.
By $C^1$-convergence on $U$ and $C^0$-convergence on $M\setminus U$, for all sufficiently
small $|t|$ we have
\[
|\nabla \varphi_t|\ge c \quad \text{on } U,
\qquad
|\varphi_t|\ge c \quad \text{on } M\setminus U.
\]
Hence $\NNN(\varphi_t)\subset U$ and $0$ is a regular value of $\varphi_t$.

For each fixed small $|t|$, define the interpolation $u_s:=(1-s)\varphi_0+s\varphi_t$ for $s\in[0,1]$. Since $\|\varphi_t-\varphi_0\|_{C^1(U)}<c$, we have $|\nabla u_s|\ge c$ on $U$ and $|u_s|\ge c$ on $M\setminus U$ for all $s\in[0,1]$, so $0$ is a regular value of every $u_s$ and $\NNN(u_s)\subset U$. The map $(x,s)\mapsto u_s(x)$ is smooth on $M\times[0,1]$, so $\Sigma:=\{(x,s)\in M\times[0,1]:u_s(x)=0\}$ is a smooth hypersurface and $\pi:\Sigma\to[0,1]$ is a proper submersion. By Ehresmann's fibration theorem, $\pi$ is a locally trivial fibre bundle; by the isotopy extension theorem (see e.g.\ \cite[Theorem~1.3]{Hirsch1976}), the resulting fibrewise diffeomorphism extends to an ambient isotopy of $M$ carrying $\NNN(\varphi_0)$ to $\NNN(\varphi_t)$.
Therefore $M\setminus \NNN(\varphi_t)$ is homeomorphic to $M\setminus\NNN(\varphi_0)$, and
$\nu(\varphi_t)=\nu(\varphi_0)$ for small $|t|$.
\end{proof}

\medskip

The singular case is subtler. The right higher-dimensional local datum is not an isotopy
statement for the full nodal set inside a punctured ball, but a statement about the
\emph{chamber decomposition} on a small boundary sphere.

\paragraph{Notation: chamber decomposition.}
Let $\{p_1,\dots,p_N\}\subset M$ be a finite set, and let $\overline{B_i}\subset M$ be pairwise disjoint closed geodesic balls centred at $p_i$. For each $i$ and $t$, set
\[
\mathcal B_i(t):=\pi_0\bigl(\partial B_i\setminus \NNN(\varphi_t)\bigr),\qquad
\mathcal Q_i(t):=\pi_0\bigl(B_i\setminus \NNN(\varphi_t)\bigr).
\]
Elements of $\mathcal B_i(t)$ are called \emph{boundary regions}; elements of $\mathcal Q_i(t)$ \emph{local chambers}. For $Q\in\mathcal Q_i(t)$, the set of \emph{boundary regions incident to $Q$} is
\[
E(Q):=\{A\in\mathcal B_i(t): A\subset \overline Q\cap \partial B_i\}.
\]
For each boundary region $A\in\mathcal B_i(t)$ there exists a unique local chamber
$Q\in\mathcal Q_i(t)$ such that $A\subset \overline Q\cap \partial B_i$,
equivalently, $A\in E(Q)$. Indeed, for every $a\in A$, continuity and the fact that
$\varphi_t(a)\neq 0$ provide an open neighbourhood $U_a\subset A$ and
$\varepsilon_a>0$ such that the inward collar
\[
C_a:=\{\exp_x(s\eta_i(x)):\ x\in U_a,\ 0<s<\varepsilon_a\}\subset B_i,
\]
where $\eta_i$ is the inward unit normal to $\partial B_i$, is contained in
$\{\varphi_t\neq 0\}$. Hence $C_a$ lies in a unique local chamber
$Q_a\in\mathcal Q_i(t)$. If $U_a\cap U_b\neq\emptyset$, then
$C_a\cap C_b\neq\emptyset$, so $Q_a=Q_b$. Since $A$ is connected, all $Q_a$
coincide; denote the common chamber by $Q$. Consequently
$\{E(Q):Q\in\mathcal Q_i(t)\}$
is a partition of $\mathcal B_i(t)$.

\begin{proposition}[Higher-dimensional USC reduction via chamber coarsening]\label{prop:abstract_USC_highdim}
Assume $\mathcal S(\varphi_0)=\{p_1,\dots,p_N\}$ is finite, and choose pairwise disjoint closed geodesic balls $\overline{B_i}$ centred at $p_i$, small enough that
\begin{enumerate}
\item[(a)] $\partial B_i$ is transverse to $\NNN(\varphi_0)$, and
\item[(b)] $\mathrm{vol}_{g_0}(B_i)$ is below the Faber--Krahn threshold of Lemma~\ref{lem:small-domain-Schro} at eigenvalue $\lambda_0$.
\end{enumerate}
Set $M':=M\setminus \bigcup_{i=1}^N B_i$, and use the chamber-decomposition notation $\mathcal B_i(t),\mathcal Q_i(t),E(Q)$ above. Assume further that for all sufficiently small $|t|$:
\begin{itemize}
\item[\rm(iii)] the partition $\{E(Q):Q\in\mathcal Q_i(t)\}$ is a coarsening of the partition $\{E(Q):Q\in\mathcal Q_i(0)\}$ under a natural identification $\mathcal B_i(t)\cong\mathcal B_i(0)$ induced by the standing framework (see the proof).
\end{itemize}
Then
\[
\nu(\varphi_t)\le \nu(\varphi_0)
\]
for all sufficiently small $|t|$.
\end{proposition}

\begin{proof}
We begin by deriving two intermediate facts from the standing Schr\"odinger framework; the coarsening hypothesis~(iii) is then the only remaining input.

\smallskip\noindent
\emph{Step 1: boundary-relative regular-value isotopy on $M'$.} Since $\mathcal S(\varphi_0)\subset\bigcup_i B_i$, we have $\NNN(\varphi_0)\cap M'\subset \{\nabla\varphi_0\neq 0\}$, so $0$ is a regular value of $\varphi_0|_{M'}$. By (a), $\partial M'=\bigsqcup_i\partial B_i$ is transverse to $\NNN(\varphi_0)$, and $C^1$-closeness of $\varphi_t$ to $\varphi_0$ preserves both the regular-value property and the transversality for small $|t|$. The boundary-relative version of the regular-value isotopy argument of Proposition~\ref{prop:regular_value_all_dim} (using relative Ehresmann and the relative isotopy extension theorem) then produces a diffeomorphism $H_t:M'\to M'$ sending each $\partial B_i$ to itself setwise, carrying $\NNN(\varphi_0)\cap M'$ onto $\NNN(\varphi_t)\cap M'$, and carrying each connected component $\Omega_0$ of $M'\setminus\NNN(\varphi_0)$ onto a connected component $\Omega_t:=H_t(\Omega_0)$ of $M'\setminus\NNN(\varphi_t)$ on which $\varphi_t$ has the same sign as $\varphi_0$ has on $\Omega_0$. In particular, $H_t$ induces a bijection $\mathcal B_i(0)\leftrightarrow\mathcal B_i(t)$; this is the identification used to state~(iii).

\smallskip\noindent
\emph{Step 2: every local chamber reaches the boundary.} We claim $E(Q)\neq\emptyset$ for every $Q\in\mathcal Q_i(t)$. Suppose not. Then $\overline Q\cap \partial B_i\subset\NNN(\varphi_t)\cap\partial B_i$, so $\partial Q\subset\NNN(\varphi_t)$ and $Q$ is a nodal domain of $\varphi_t$ on all of $M$ with $Q\subset B_i$. But $\mathrm{vol}_{g_t}(Q)\le\mathrm{vol}_{g_t}(B_i)\to \mathrm{vol}_{g_0}(B_i)$ as $t\to 0$, and (b) together with the uniform bounds $\lambda_t\to\lambda_0$, $\|\Pot_t\|_{L^\infty}$ bounded, places this below the Faber--Krahn threshold of Lemma~\ref{lem:small-domain-Schro} for all small $|t|$, a contradiction.

\smallskip\noindent
\emph{Step 3: graph-quotient argument.} For each fixed $t$, define a bipartite graph $\mathcal G_t$:
\begin{itemize}
\item \emph{Exterior vertices:} the connected components of $M'\setminus \NNN(\varphi_t)$.
\item \emph{Local vertices:} the chambers in $\bigsqcup_{i=1}^N \mathcal Q_i(t)$.
\item \emph{Edges:} an exterior vertex $E$ and a local vertex $Q\in \mathcal Q_i(t)$ are joined whenever they share a boundary region $A\in E(Q)$ along which $\varphi_t$ has the same sign on both sides.
\end{itemize}

By Step 2, each local vertex is incident to at least one edge. No nodal domain $D$ of $\varphi_t$ on $M$ is contained entirely in one ball $B_i$: if $D\subset B_i$, then $D=Q$ for some $Q\in\mathcal Q_i(t)$, and by Step 2 the chamber $Q$ reaches some boundary region $A$ where $\varphi_t\neq 0$; a path in $Q$ concatenated with a path through $A$ into $M'$ stays in $\{\varphi_t\neq 0\}$, showing $D$ extends into $M'$, a contradiction.

Hence every nodal domain of $\varphi_t$ intersects $M'$ and at least one ball $B_i$ in pieces forming a connected subgraph of $\mathcal G_t$; conversely, each connected component of $\mathcal G_t$ lies in a single nodal domain (adjacent pieces share boundary points where $\varphi_t$ has a fixed sign). Therefore
\[
\nu(\varphi_t)=\#\pi_0(\mathcal G_t).
\]

By Step 1, the exterior vertices of $\mathcal G_t$ are naturally identified with those of $\mathcal G_0$, and for each $i$ the set of boundary regions $\mathcal B_i(t)$ is identified with $\mathcal B_i(0)$. Under these identifications, (iii) says that each block $E(Q_t)$ for $Q_t\in\mathcal Q_i(t)$ is a union of blocks $E(Q_0)$ of the partition of $\mathcal B_i(0)$; equivalently, the local vertices of $\mathcal G_t$ inside $B_i$ are obtained from those of $\mathcal G_0$ by merging $Q_0,Q_0'\in\mathcal Q_i(0)$ whenever $E(Q_0)\cup E(Q_0')\subset E(Q_t)$ for some $Q_t$. The exterior vertices are unchanged. Thus $\mathcal G_t$ is a vertex-quotient of $\mathcal G_0$, and such quotients cannot increase the number of connected components. Therefore
\[
\nu(\varphi_t)=\#\pi_0(\mathcal G_t)\le \#\pi_0(\mathcal G_0)=\nu(\varphi_0).\qedhere
\]
\end{proof}

\begin{remark}\label{rem:no_local_isotopy_highdim}
One should \emph{not} expect ambient isotopy of the full local nodal set near a singular point
in dimensions $n\ge 3$. For instance, in $\mathbb R^3$ the nodal set
$\{xy=0\}$ (two coordinate planes meeting along the $z$-axis)
is smoothed by the perturbation
$\{xy+tz=0\}$.
On a small sphere, the link goes from two great circles (four chambers) to one smooth curve (two chambers): a genuine coarsening, not an isotopy. Proposition~\ref{prop:abstract_USC_highdim} isolates exactly
this weaker, and for our purposes sufficient, local input.
\end{remark}

\medskip

This leads to the following conjectural extension of Theorem~\ref{thm:main_result_A}
to isolated singularities in higher dimensions.

\begin{conjecture}[Isolated conical singularities imply USC]\label{conj:isolated_conical_USC}
Let $n\ge 3$, let $(M^n,g_t)$ be a $C^\infty$ family of closed Riemannian manifolds with
$g_t\to g_0$, and let $\varphi_{k,t}\to \varphi_{k,0}$ in $C^\infty(M)$. Assume that
$\mathcal S(\varphi_{k,0})$ is finite.

For each $p\in \mathcal S(\varphi_{k,0})$, assume that there exist:
\begin{itemize}
\item a homogeneous harmonic polynomial $H_p$ of degree $m_p\ge 2$;
\item a scale $r_p>0$;
\item a number $\theta_p\in(0,1)$;
\item and a pinching constant $\delta_p>0$ sufficiently small;
\end{itemize}
such that:

\begin{enumerate}
\item[(a)] the normalised blow-ups of $\varphi_{k,0}$ at $p$ converge to $H_p$; more precisely,
if
\[
u_{p,r}(x)
:=
\frac{\varphi_{k,0}(\exp_p(rx))}
{\left(\int_{\partial B_1}
\varphi_{k,0}(\exp_p(ry))^2d\sigma(y)\right)^{1/2}},
\]
then
\[
u_{p,r}\longrightarrow H_p
\]
in $C^1_{\mathrm{loc}}(B_1\setminus\{0\})$ as $r\downarrow 0$;

\item[(b)] $0$ is a regular value of $H_p|_{S^{n-1}}$;

\item[(c)] the generalised Almgren frequency at $p$ is uniformly pinched across scales:
\[
N_p(r)-N_p(\theta_p r)\le \delta_p
\qquad \text{for all } 0<r<r_p.
\]
\end{enumerate}

Then
\[
\limsup_{t\to 0}\nu(\varphi_{k,t})\le \nu(\varphi_{k,0}).
\]
\end{conjecture}

\begin{remark}\label{rem:what_local_theorem_should_say}
The point of Conjecture~\ref{conj:isolated_conical_USC} is that hypotheses (a)-(c) should
force the following local statement at each singular point $p$: for some fixed small radius
$\rho_p>0$, the hypersurface $\NNN(\varphi_{k,t})\cap \partial B(p,\rho_p)$ is a small perturbation
of $\NNN(\varphi_{k,0})\cap \partial B(p,\rho_p)$, every local chamber of
$B(p,\rho_p)\setminus \NNN(\varphi_{k,t})$ is incident to at least one boundary region on $\partial B(p,\rho_p)$, and the induced
partition of
\[
\pi_0\bigl(\partial B(p,\rho_p)\setminus \NNN(\varphi_{k,t})\bigr)
\]
is a coarsening of the corresponding partition at $t=0$. Once this is known,
Proposition~\ref{prop:abstract_USC_highdim} gives the global upper semicontinuity.
\end{remark}

\begin{remark}[Beyond isolated singularities]\label{rem:fully_stratified_case}
In general, the singular set in dimensions $n\ge 3$ need not be discrete. A full
higher-dimensional theory should therefore be formulated on tubular neighbourhoods of
Whitney strata, with boundary sphere bundles replacing the circles used in dimension $2$.
The reduction of Proposition~\ref{prop:abstract_USC_highdim} suggests that the correct local input is a stability/coarsening
statement for chamber decompositions on those sphere bundles, not ambient isotopy of the full
nodal set near the singular stratum.
\end{remark}

\paragraph{Relevant higher-dimensional literature.}
The structural foundation for nodal sets in dimensions $n\ge 3$ is the Hardt--Simon decomposition \cite{HardtSimon1989}: for solutions of second-order elliptic equations with smooth coefficients, the nodal set is a smooth hypersurface away from a singular set of Hausdorff dimension at most $n-2$, and the singular part is countably $(n-2)$-rectifiable. The quantitative theory of the Almgren frequency function and its generalisation to variable-coefficient elliptic equations is developed by Naber--Valtorta \cite{NaberValtorta2017}, who obtain effective pinching estimates, unique approximation by homogeneous harmonic polynomials across scales, and volume bounds on critical sets. Logunov \cite{Logunov2018} establishes propagation-of-smallness and polynomial upper estimates on $\mathcal H^{n-1}(\NNN(\varphi))$ for Laplace eigenfunctions in all dimensions. Together, these tools provide the analytic backdrop for the conjectural local coarsening statement in Remark~\ref{rem:what_local_theorem_should_say}.

\appendix

\section{Technical tools: inner-radius stability}\label{sec:inner_radius}

The following result complements the Faber--Krahn arguments used in the main body by establishing that the inner radius of any nodal domain of $\varphi_t$ is comparable to $1/\sqrt{\lambda_t}$, uniformly for small $|t|$. The argument follows the strategy of Mangoubi \cite{Mangoubi2008} (see also Charron--Mangoubi \cite{CharronMangoubi2024} for the higher-dimensional analogue), adapted to the one-parameter family setting.

\begin{theorem}\label{thm: Inner_radius_perturbation}
Let $M_t:=(M,g_t)$, $t\in\RR$, be a family of smooth closed Riemannian surfaces with
$g_t\to g_0$ in $C^2$ as $t\to 0$. Let $\varphi_t$ be a smooth eigenbranch with
$-\Delta_{g_t}\varphi_t=\lambda_t\varphi_t$, $\lambda_t\to\lambda_0>0$,
and $\varphi_t\to\varphi_0$ in $C^\infty$.
Then there exist constants $a,c>0$ such that for every $|t|<a$ and every nodal domain
$\Omega_t$ of $\varphi_t$,
\begin{equation}\label{ineq: inner radius estimate}
  \inrad_{g_t}(\Omega_t)\ \ge\ \frac{c}{\sqrt{\lambda_t}}.
\end{equation}
In particular, the minimum inner radius
$R_t:=\min_{\Omega_t}\inrad_{g_t}(\Omega_t)$ satisfies $R_t\ge m$ for all $|t|<a$,
where $m>0$ depends only on $\lambda_0$ and the geometric constants of $(M,g_0)$.
\end{theorem}

We first record the key uniformity inputs.

\paragraph{Metric comparability.}
There exist $a_0>0$ and $\kappa\ge 1$ such that for all $x,y\in M$ and all $|t|<a_0$,
\begin{equation}\label{ineq: metric comparability}
  \frac{1}{\kappa}d_0(x,y)\ \le\ d_t(x,y)\ \le\ \kappa d_0(x,y).
\end{equation}
In particular, $D_0(p,r)\subset D_t(p,\kappa r)$ and $D_t(p,r)\subset D_0(p,\kappa r)$
for all $p\in M$ and $|t|<a_0$, where $D_t(p,r)$ denotes the geodesic disc of radius $r$
centred at $p$ in $(M,g_t)$.

From our hypothesis, for $\delta_1>0$ there exists $a_1>0$ (depending on $\delta_1$) such that
\begin{equation}\label{ineq:eigfn_conv}
  \lambda_0-\delta_1<\lambda_t<\lambda_0+\delta_1\qquad\forall t\in(-a_1,a_1).
\end{equation}

We now state the analogues of \cite[Theorem~3.2 and Lemma~3.3]{Mangoubi2008} for the one-parameter family $M_t$ (see also \cite{NazarovPolterovichSodin, Nadirashvili1991}).

\begin{Lemma}[Uniform conformal charts and quasiharmonic factorisation]\label{lem: Mangoubi's lemmas}
After shrinking $a_0>0$, there exist positive constants
\[
  q_+, q_-, \rho, \epsilon, \delta, K_1, K_2
\]
independent of $p\in M$ and of $|t|<a_0$, such that the following hold.
\begin{enumerate}
  \item For each point $p\in M$ and each $|t|<a_0$, there exists a conformal map
    $\Psi_{t,p}:\DD\to D_t(p,\rho)$ with $\Psi_{t,p}(0)=p$ and a positive function
    $q_t$ on $\DD$ satisfying
    \[
      \Psi_{t,p}^*(g_t)=q_t(z)|dz|^2,\qquad q_-<q_t<q_+.
    \]
  \item For every $|t|<a_0$ and every disc $D_t(p,r)$ with
    $r\le \epsilon/\sqrt{\lambda_0+\delta_1}$, the eigenfunction $\varphi_t|_{D_t(p,r)}$
    admits a quasiharmonic factorisation
    \[
      \varphi_t|_{D_t(p,r)}\ =\ (v_t\cdot u_t)\circ h_t,
    \]
    where $h_t:D_t(p,r)\to\DD$ is a $K_t$-quasiconformal homeomorphism with
    $K_1\le K_t\le K_2$, the function $u_t$ is harmonic on $\DD$, and
    $1-\delta\le v_t\le 1$ on $\DD$.
\end{enumerate}
\end{Lemma}

\begin{proof}
Part~(1) follows by introducing $t$-dependence into the argument of
\cite[Lemma~2.3.3]{JurgenJost2002}. The conformal chart $\Psi_{t,p}$ exists for each
$(t,p)$; the bounds $q_\pm$ and the radius $\rho$ may be chosen locally in $p$.
Since $M$ is compact and the family $\{g_t:|t|<a_0\}$ is precompact in $C^2$,
a standard finite-covering argument yields uniform constants
$q_\pm,\rho>0$ independent of $p$ and $t$.

Part~(2) follows from \cite[Lemma~3.3]{NazarovPolterovichSodin} applied to the
one-parameter family of elliptic equations $-\Delta_{g_t}\varphi_t=\lambda_t\varphi_t$,
combined with Part~(1). The constants $\epsilon,\delta,K_1,K_2$ arise from the
Nazarov--Polterovich--Sodin quasiharmonic approximation and depend on $q_\pm$,
$\rho$, and the $C^2$ bound on $g_t$; by the same compactness argument, they can
be chosen uniformly in $p$ and $t$.
\end{proof}

\begin{proof}[Proof of Theorem~\ref{thm: Inner_radius_perturbation}]
Fix any $|t|<a:=\min\{a_0,a_1\}$ (so that both the conformal-chart/factorisation inputs and the eigenvalue bound $\lambda_t<\lambda_0+\delta_1$ are available) and any nodal domain $\Omega_t$ of $\varphi_t$. Without loss of
generality, assume $\varphi_t|_{\Omega_t}>0$. Let $p_t\in\Omega_t$ be a point where
$\varphi_t$ attains its maximum on $\Omega_t$.

\medskip
\noindent\textbf{Step~1 (Quasiharmonic factorisation at $p_t$).}
Apply Lemma~\ref{lem: Mangoubi's lemmas}(2) on a wavelength-scale disc $D_t:=D_t(p_t,r_0)$
with $r_0:=\epsilon/\sqrt{\lambda_0+\delta_1}$. We obtain
\[
  \varphi_t|_{D_t}\ =\ (v_t\cdot u_t)\circ h_t
\]
with $h_t:D_t\to\DD$ a $K_t$-quasiconformal homeomorphism ($K_1\le K_t\le K_2$),
$u_t$ harmonic on $\DD$, and $1-\delta\le v_t\le 1$. After composing $h_t$ with a
disc automorphism, we may arrange $h_t(p_t)=0$.

\medskip
\noindent\textbf{Step~2 (Hotspot comparison).}
Since $h_t(p_t)=0$, we have
\[
  \varphi_t(p_t)\ =\ v_t(0)u_t(0)\ \ge\ (1-\delta)u_t(0).
\]
Let $U^0_t\subset\DD$ be the connected component of $\{u_t>0\}$ containing $0$.
Since $v_t>0$, the preimage $h_t^{-1}(U^0_t)$ lies inside $\Omega_t\cap D_t$, so for every $z\in U^0_t$ the point $h_t^{-1}(z)$ belongs to $\Omega_t$. Because $p_t$ maximises $\varphi_t$ on $\Omega_t$,
\[
  \varphi_t(p_t)\ \ge\ \varphi_t(h_t^{-1}(z))\ =\ v_t(z)u_t(z)\ \ge\ (1-\delta)u_t(z)
\]
for every $z\in U^0_t$. Taking the supremum over $z$ gives $\varphi_t(p_t)\ge (1-\delta)\sup_{U^0_t}u_t$. Since also $v_t\le 1$, we have $\varphi_t(p_t)=v_t(0)u_t(0)\le u_t(0)$, and combining:
\begin{equation}\label{ineq: hotspot harmonic ratio}
  \frac{u_t(0)}{\sup_{U^0_t}u_t}\ \ge\ 1-\delta.
\end{equation}

\medskip
\noindent\textbf{Step~3 (Inscribed disc in the positivity component).}
The estimate \eqref{ineq: hotspot harmonic ratio} says that $u_t(0)$ is within a factor $(1-\delta)$ of $\sup_{U^0_t}u_t$. We claim there exists $r_*=r_*(\delta)>0$ such that $\mathbb D_{r_*}\subset U^0_t$. Indeed, $u_t$ is a positive harmonic function on the connected open set $U^0_t\subset\mathbb D$. The boundary $\partial U^0_t$ decomposes into $\Gamma_1:=\partial U^0_t\cap\mathbb D$ (where $u_t=0$) and $\Gamma_2:=\partial U^0_t\cap\partial\mathbb D$ (where $u_t\le\sup_{U^0_t}u_t$). By the maximum principle on $U^0_t$,
\[
u_t(0)\ \le\ \omega(0,\Gamma_2,U^0_t)\sup_{U^0_t}u_t,
\]
where $\omega(0,\Gamma_2,U^0_t)$ is the harmonic measure of $\Gamma_2$ relative to $0$ in $U^0_t$. Combined with \eqref{ineq: hotspot harmonic ratio}, this gives $\omega(0,\Gamma_1,U^0_t)\le\delta$. By the Beurling--Nevanlinna estimate (see \cite[Section~3]{Mangoubi2008} and \cite{Nadirashvili1991}), the harmonic measure of $\Gamma_1$ from $0$ in $U^0_t$ is bounded below in terms of $\operatorname{dist}(0,\Gamma_1)$, and the bound $\omega(0,\Gamma_1,U^0_t)\le\delta$ forces $\operatorname{dist}(0,\Gamma_1)\ge r_*(\delta)>0$, giving $\mathbb D_{r_*}\subset U^0_t$. Crucially, $\delta$ (hence $r_*$) is uniform in $p_t\in M$, in the choice of $\Omega_t$, and in $|t|<a$.

\medskip
\noindent\textbf{Step~4 (Mori distortion and conclusion).}
Recall that the quasiharmonic factorisation in Step~1 operates on the wavelength-scale disc $D_t=D_t(p_t,r_0)$ with $r_0:=\epsilon/\sqrt{\lambda_0+\delta_1}$.
The conformal chart $\Psi_{t,p_t}$ maps $\DD$ onto $D_t(p_t,\rho)$ with $\rho$ uniform (Lemma~\ref{lem: Mangoubi's lemmas}(1)), so the preimage of $D_t(p_t,r_0)$ under $\Psi_{t,p_t}$ contains a Euclidean disc $\DD_R$ with
\[
  R\ \ge\ \frac{r_0}{\rho}\cdot C_1\ =\ \frac{C_1\epsilon}{\rho\sqrt{\lambda_0+\delta_1}},
\]
for a uniform constant $C_1>0$ depending on the conformal distortion bounds $q_\pm$. In particular, $R\asymp r_0\asymp \lambda_0^{-1/2}$.

Define $\tilde h_t:=h_t\circ\Psi_{t,p_t}:\DD_R\to\DD$. This composition is $K_t$-quasiconformal with $\tilde h_t(0)=0$.
By Mori's distortion theorem (\cite[Ch.~III.C]{Ahlfors-1966}),
\[
  |\tilde h_t(z)|\ \le\ C(K_2)\Big(\frac{|z|}{R}\Big)^{1/K_2}
  \qquad(z\in\DD_R).
\]
Hence the preimage of $\DD_{r_*}$ under $\tilde h_t$ contains $\DD_{c_0 R}$ with
$c_0=c_0(K_2,r_*)>0$. Since $\Psi_{t,p_t}$ is conformal with $\Psi_{t,p_t}^*(g_t)=q_t(z)|dz|^2$ and $q_t\ge q_->0$ (Lemma~\ref{lem: Mangoubi's lemmas}(1)), every point on the image of $\partial\DD_{c_0 R}$ is at $g_t$-distance at least $\sqrt{q_-}\cdot c_0 R$ from $p_t$. Since $\Psi_{t,p_t}(\DD_{c_0R})\subset\Omega_t$, we conclude
\[
  \mathrm{dist}_{g_t}(p_t,\partial\Omega_t)
  \ \ge\ \sqrt{q_-}c_0R
  \ \ge\ \frac{\sqrt{q_-}c_0C_1\epsilon}{\rho\sqrt{\lambda_0+\delta_1}}
  \ =:\ \frac{C}{\sqrt{\lambda_0+\delta_1}}.
\]
To convert this into a bound of the form $c/\sqrt{\lambda_t}$, we use the \emph{lower} eigenvalue bound $\lambda_t\ge\lambda_0-\delta_1$ from \eqref{ineq:eigfn_conv}. Setting $c:=C\sqrt{(\lambda_0-\delta_1)/(\lambda_0+\delta_1)}$ (which is positive for $\delta_1<\lambda_0$), we have
\[
\frac{C}{\sqrt{\lambda_0+\delta_1}}\ =\ \frac{c}{\sqrt{\lambda_0-\delta_1}}\ \ge\ \frac{c}{\sqrt{\lambda_t}}.
\]
Since the centre of a largest inscribed ball in $\Omega_t$ is at least as far from
$\partial\Omega_t$ as the hotspot $p_t$, we conclude
\[
  \inrad_{g_t}(\Omega_t)
  \ \ge\ \mathrm{dist}_{g_t}(p_t,\partial\Omega_t)
  \ \ge\ \frac{c}{\sqrt{\lambda_t}}.
\]
As this holds for every nodal domain $\Omega_t$ and every $|t|<a:=\min\{a_0,a_1\}$
(where $a_1$ ensures $\lambda_0-\delta_1<\lambda_t<\lambda_0+\delta_1$), we obtain the claimed uniform bound. Choosing $\delta_1:=\lambda_0/2$, we get $m:=c/\sqrt{3\lambda_0/2}>0$, so
\[
  R_t\ :=\ \min_{\Omega_t}\inrad_{g_t}(\Omega_t)
  \ \ge\ \frac{c}{\sqrt{\lambda_t}}
  \ \ge\ \frac{c}{\sqrt{3\lambda_0/2}}
  \ =:\ m\ >\ 0
  \qquad\forall|t|<a.\qedhere
\]
\end{proof}

\begin{remark}
For one-parameter families of surfaces with boundary as considered in
Theorem~\ref{thm:prescription_nodal_intersection_boundary}, a similar inner-radius bound is expected, provided one has the corresponding boundary analogue of the Mangoubi inscribed-disc theorem (i.e.\ a uniform inscribed-ball estimate for nodal domains touching the boundary). We do not pursue this here.
\end{remark}

\subsection{Acknowledgements}  The first named author would like to thank the Prime Minister's Research Fellowship (PMRF), India program(Fellowship no. 1301599) for partially supporting his work.  
The first and second named authors would like to thank the Indian Institute of Technology Bombay, India, for providing ideal working conditions. The work is partially funded by the European Union. The third named author would like to thank Institut de Recherche Math\'{e}matique Avanc\'{e}e (IRMA), France, and the ERC grant (Acronym = InSpeGMos, Number = 101096550) for supporting the work. The third named author would also like to thank Nalini Anantharaman and Bernard Helffer for various discussions and suggestions that helped improve the article.

\bibliographystyle{amsalpha}
\bibliography{references}

\end{document}